\tikzset{LMC style/.style={>=angle 60,every edge/.append style={thick},every state/.style={thick,minimum size=20,inner sep=0.5}}}
\newcommand{\RR}{\mathbb{R}}
\newcommand{\EE}{\mathbb{E}}
\newcommand{\NN}{\mathbb{N}}
\newcommand{\QQ}{\mathbb{Q}}
\newcommand{\PP}{\mathbb{P}}
\newcommand{\e}{\mathrm{e}}
\newcommand{\GG}{\mathscr{G}}
\newcommand{\1}{\mathbbm{1}}
\newcommand{\supp}{\mathrm{supp}}
\newcommand{\liexp}{\lim_{n\rightarrow\infty} \frac1n \ln L_n}
\newcommand{\Psimin}{\Psi_{\text{min}}}
\newcommand{\SPRT}{\mathrm{SPRT}}
\newcommand{\A}{\mathcal{A}}
\newcommand{\B}{\mathcal{B}}
\renewcommand{\epsilon}{\varepsilon}
\renewcommand{\H}{\mathcal{H}}
\newcommand{\M}{\mathcal{M}}
\newcommand{\oA}{\bar{\A}}
\renewcommand{\S}{\mathcal{S}}
\newcommand{\uparr}[1]{#1\mathord{\uparrow}}
\newcommand*{\DrawBoundingBox}[1][]{%
    \draw [
        draw=gray,
    ]
    ([shift={(-1pt,-1pt)}]current bounding box.south west)
    rectangle ([shift={(1pt,1pt)}]current bounding box.north east);
}
\DeclarePairedDelimiter\floor{\lfloor}{\rfloor}
\newcommand{\stefan}[1]{\marginpar{\textcolor{blue}{#1}}}
\renewcommand{\stefan}[1]{}
\title{On the Sequential Probability Ratio Test in Hidden Markov Models}
\author{Oscar Darwin}{Department of Computer Science, Oxford University, United Kingdom }{}{https://orcid.org/0000-0001-5016-014X}{}
\author{Stefan Kiefer}{Department of Computer Science, Oxford University, United Kingdom}{}{https://orcid.org/0000-0003-4173-6877}{}
\authorrunning{O. Darwin and S. Kiefer}
\keywords{Markov chains, hidden Markov models, probabilistic systems, verification}
\begin{document}

\maketitle

\begin{abstract}
We consider the Sequential Probability Ratio Test applied to Hidden Markov Models. Given two Hidden Markov Models and a sequence of observations generated by one of them, the Sequential Probability Ratio Test attempts to decide which model produced the sequence. We show relationships between the execution time of such an algorithm and Lyapunov exponents of random matrix systems. Further, we give complexity results about the execution time taken by the Sequential Probability Ratio Test.
\end{abstract}

\section{Introduction}

A (discrete-time, finite-state) \emph{Hidden Markov Model (HMM)} (often called \emph{labelled Markov chain}) has a finite set $Q$ of states and for each state a probability distribution over its possible successor states.
Every state is associated with a probability transition over a successor state and an emitted letter (\emph{observation}).
For example, consider the following HMM:
\begin{center}
\begin{tikzpicture}[scale=2.5,LMC style]
\node[state] (q1) at (0,0) {$q_1$};
\node[state] (q2) at (1,0) {$q_2$};
\path[->] (q1) edge [loop,out=200,in=160,looseness=10] node[left] {$\frac13 a$} (q1);
\path[->] (q1) edge [bend left] node[above] {$\frac23 b$} (q2);
\path[->] (q2) edge [loop,out=20,in=-20,looseness=10] node[right] {$\frac23 a$} (q2);
\path[->] (q2) edge [bend left] node[below] {$\frac13 b$} (q1);
\end{tikzpicture}
\end{center}
In state~$q_1$, the probability of emitting~$a$ and the next state being also~$q_1$ is~$\frac13$, and the probability of emitting~$b$ and the next state being~$q_2$ is~$\frac23$.
An HMM is typically viewed as a producer of a finite or infinite word of emitted observations.
For example, starting in~$q_1$, the probability of producing a word with prefix $a b a$ is $\frac13 \cdot \frac23 \cdot \frac23$, whereas starting in~$q_2$, the probability of $a b a$ is $\frac23 \cdot \frac13 \cdot \frac13$.
The random sequence of states is considered not observable (which explains the term \emph{hidden} in HMM).

HMMs are widely employed in fields such as speech recognition (see~\cite{Rabiner89} for a tutorial),
gesture recognition~\cite{Gesture},
signal processing~\cite{SignalProcessing},
and climate modeling~\cite{Weather}.
HMMs are heavily used in computational biology~\cite{HMM-comp-biology},
more specifically in DNA modeling~\cite{DNA-modeling} and biological sequence analysis~\cite{durbin1998biological},
including protein structure prediction~\cite{ProteinStructure} 
and gene finding~\cite{GeneFinding}.
In computer-aided verification, HMMs are the most fundamental model for probabilistic systems; model-checking tools such as Prism~\cite{KNP11} or Storm~\cite{Storm} are based on analyzing HMMs efficiently.

One of the most fundamental questions about HMMs is whether two initial distributions are \emph{(trace) equivalent}, i.e., generate the same distribution on infinite observation sequences.
In the example above, we argued that (the Dirac distributions on) the states $q_1, q_2$ are not equivalent.
The equivalence problem is very well studied and can be solved in polynomial time using algorithms that are based on linear algebra~\cite{schut61,Paz71,Tzeng92,CortesMRdistance}.
The equivalence problem has applications in verification, e.g., of randomised anonymity protocols~\cite{kief11}.

Equivalence is a strong notion, and a natural question about nonequivalent distributions in a given HMM is \emph{how} different they are.
For initial distributions $\pi_1, \pi_2$ on the states of the HMM, let us write $\PP_{\pi_1}, \PP_{\pi_2}$ for the induced probability measure on infinite observation sequences; i.e., $\PP_{\pi_i}(E)$, for a measurable event $E \subseteq \Sigma^\omega$, is the probability that the random infinite word~$w \in \Sigma^\omega$ produced starting from~$\pi_i$ is in~$E$.
Then, the \emph{total variation distance} between $\PP_{\pi_1}, \PP_{\pi_2}$ is defined as
\[
 d(\pi_1,\pi_2) \ := \ \sup~\{ |\PP_{\pi_1}(E) - \PP_{\pi_2}(E)| \mid \text{measurable } E \subseteq \Sigma^\omega\}\,.
\]
This supremum is a maximum; i.e., there always exists a ``maximizing event'' $E \subseteq \Sigma^\omega$ with $d(\pi_1,\pi_2) = \PP_{\pi_1}(E) - \PP_{\pi_2}(E)$.
In these terms, initial distributions $\pi_1, \pi_2$ are equivalent if and only if $d(\pi_1, \pi_2) = 0$.
The total variation distance was studied in more detail in~\cite{kief14}.
There it was shown that the problem whether $d(\pi_1, \pi_2) = 1$ holds can also be decided in polynomial time.
Call distributions $\pi_1, \pi_2$ \emph{distinguishable} if $d(\pi_1, \pi_2) = 1$.
Distinguishability was used for runtime monitoring~\cite{kief16} and diagnosability~\cite{BertrandHL16,AkshayBFG19} of stochastic systems.

Distributions $\pi_1, \pi_2$ that are distinguishable (i.e., $d(\pi_1,\pi_2) = 1$) can nevertheless be ``hard'' to distinguish.
In our example above, (the Dirac distributions on) $q_1, q_2$ are distinguishable.
If we replace the transition probabilities $\frac13, \frac23$ in the HMM by $\frac12-\varepsilon, \frac12+\varepsilon$, respectively, states $q_1, q_2$ remain distinguishable for every $\varepsilon>0$, although, intuitively, the smaller $\varepsilon > 0$ the more observations are needed to define an event~$E$ such that $\PP_{\pi_1}(E) - \PP_{\pi_2}(E)$ is close to~$1$.

To make this more precise, for initial distributions $\pi_1, \pi_2$, a word $w \in \Sigma^\omega$ and $n \in \NN$ consider the \emph{likelihood ratio}
\[
L_n(w) \ := \ \frac{\PP_{\pi_1}(w_n \Sigma^\omega)}{\PP_{\pi_2}(w_n \Sigma^\omega)}\,,
\]
where $w_n$ denotes the length-$n$ prefix of~$w$.
In the example above, we argued that $\PP_{q_1}(a b a \Sigma^\omega) = \frac13 \cdot \frac23 \cdot \frac23$ and $\PP_{q_2}(a b a \Sigma^\omega) = \frac23 \cdot \frac13 \cdot \frac13$.
Thus, for any word $w$ starting with $a b a$ we have $L_n(w) = 2$. We consider the likelihood ratio~$L_n$ as a random variable for every $n \in \NN$. 
It turns out more natural to focus on the \emph{log-likelihood ratio} $\ln L_n$.
One can show that the limit $\lim_{n \to \infty} \ln L_n \in [-\infty, \infty]$ exists $\PP_{\pi_1}$-almost surely and $\PP_{\pi_2}$-almost surely (see, e.g., \cite[Proposition~6]{kief14}).
In fact, if $\pi_1, \pi_2$ are distinguishable, then $\lim_{n \to \infty} \ln L_n = \infty$ holds $\PP_{\pi_1}$-almost surely and $\lim_{n \to \infty} \ln L_n = -\infty$ holds $\PP_{\pi_2}$-almost surely.
This suggests the ``average slope'', $\liexp$, of increase or decrease of~$\ln L_n$ as a measure of \emph{how} distinguishable two distinguishable distributions $\pi_1,\pi$ are.

The log-likelihood ratio plays a central role in the \emph{sequential probability ratio test (SPRT)}~\cite{wald45}, which is  optimal~\cite{WaldWolfowitz48} among sequential hypothesis tests (such tests attempt to decide between two hypotheses without fixing the sample size in advance).
In terms of an HMM and two initial distributions $\pi_1, \pi_2$, the SPRT attempts to decide, given longer and longer prefixes of an observation sequence $w \in \Sigma^\omega$, which of $\pi_1, \pi_2$ is more likely to emit~$w$.
The SPRT works as follows: fix a lower and an upper threshold (which determine type-I and type-II errors); given increasing prefixes of~$w$ keep track of $\ln L_n(w)$, and when the upper threshold is crossed output~$\pi_1$ and stop, and when the lower threshold is crossed output~$\pi_2$ and stop.
Again, it is natural to assume that the average slope of increase or decrease of~$\ln L_n$ determines how long the SPRT needs to cross one of the thresholds.

If the average slope $\liexp$ exists and equals a number $\ell$ with positive probability, we call $\ell$ a \emph{likelihood exponent}.
The term is motivated by a close relationship to \emph{Lyapunov exponents}, which characterise the growth rate of certain random matrix products.
As the most fundamental contribution of this paper, we show that the average slope exists almost surely and that any HMM with $m$~states has at most $m^2 + 1$ likelihood exponents.

The rest of the paper is organised as follows.
In \cref{liexpsubsect} we exhibit a tight connection between the SPRT and likelihood exponents; i.e., the time taken by the SPRT depends on the likelihood exponents of the HMM.
This connection motivates our results on likelihood exponents in the rest of the paper.
In \cref{sec:qual} we prove complexity results concerning the probability that the average slope equals a particular likelihood exponent.
In \cref{sec:rep} we show that the average slope exists almost surely and prove our bound on the number of likelihood exponents.
Further, we show that the likelihood exponents can be efficiently expressed in terms of Lyapunov exponents.
In \cref{sec:det} we show that for \emph{deterministic} HMMs one can compute likelihood exponents in polynomial time.
We conclude in \cref{sec:conclusions}.

\section{Preliminaries} \label{sec:prelims}
We write $\NN$ for the set of non-negative integers. 
For $d \in \NN$ we write 
$[d] = \{1, \dots, d\}$. 
For a finite set~$Q$, vectors $\mu \in \RR^Q$ are viewed as row vectors, and their transpose (a column vector) is denoted by $\mu^\top$.
The norm $\|\mu\|$ is assumed to be the $l_1$ norm: $\| \mu \| = \sum_{q \in Q} | \mu_q |$.
We write $\vec{0}, \vec{1}$ for the vectors all of whose entries are $0$, $1$, respectively.
For $q \in Q$, we denote by~$e_q \in \{0,1\}^Q$ the vector with $(e_q)_q = 1$ and $(e_q)_{q'} = 0$ for $q' \ne q$.
A matrix $M \in [0,1]^{Q \times Q}$ is \emph{stochastic} if $\vec{1}^\top = M \vec{1}^\top$.
We often identify vectors $\mu \in [0,1]^Q$ such that $\| \mu \| = 1$ with the corresponding probability distribution on~$Q$.
For $\mu \in [0,\infty)^Q$ we write $\supp(\mu) := \{q \in Q \mid \mu_q > 0\}$.

For a finite alphabet~$\Sigma$ and $n \in \NN$ we denote by $\Sigma^n, \Sigma^*, \Sigma^+, \Sigma^\omega$ the sets of length-$n$ words, finite words, non-empty finite words, infinite words, respectively.
For $w \in \Sigma^\omega$ we write $w_n$ for the length-$n$ prefix of~$w$.

A \emph{Hidden Markov Model} (HMM) is a triple $\H = (Q, \Sigma, \Psi)$ where $Q$ is a finite set of states, $\Sigma$ is a set of observations (or ``letters''), and the function $\Psi : \Sigma \rightarrow [0,1]^{Q \times Q}$ specifies the transitions such that $\sum_{a \in \Sigma} \Psi(a)$ is stochastic. For computational purposes we assume the numbers in $\Psi$ are rational and expressed as fractions of integers encoded in binary.
A \emph{Markov chain} is a pair $(Q, T)$ where $Q$ is a finite set of states and $T \in [0,1]^{Q \times Q}$ is a stochastic matrix.
A Markov chain $(Q,T)$ is naturally associated with its directed \emph{graph} $(Q,\{(q,r) \mid T_{q,r} > 0\})$, and so we may use graph concepts, such as strongly connected components (SCCs), in the context of a Markov chain. Trivial SCCs are considered SCCs.
The \emph{embedded} Markov chain of an HMM $(Q, \Sigma, \Psi)$ is the Markov chain $(Q, \sum_{a \in \Sigma} \Psi(a))$.
We say that an HMM is \emph{strongly connected} if the graph of its embedded Markov chain is.
\begin{example} \label{ex-HMMdef}
The HMM from the introduction is the triple $\H = (\{q_1, q_2\}, \{a,b\}, \Psi)$ with $\Psi(a) = \begin{pmatrix} \frac13 & 0 \\ 0 & \frac23 \end{pmatrix}$ and $\Psi(b) = \begin{pmatrix} 0 & \frac23 \\ \frac13 & 0 \end{pmatrix}$.
The embedded Markov chain is $(\{q_1,q_2\}, \begin{pmatrix} \frac13 & \frac23 \\ \frac13 & \frac23 \end{pmatrix})$.
\end{example}

Fix an HMM $\H = (Q, \Sigma, \Psi)$ for the rest of the section.
We extend $\Psi$ to the mapping $\Psi : \Sigma^* \rightarrow [0,1]^{Q \times Q}$ with $\Psi(a_1 \cdots a_n) = \Psi(a_1) \cdot \ldots \cdot \Psi(a_n)$ and $\Psi(\epsilon) = I$, where $\epsilon$ is the empty word and $I$ the $Q \times Q$ identity matrix.
We call a finite sequence $v = q_0 a_1 q_1 \cdots a_n q_n \in Q (\Sigma Q)^*$ a \emph{path} and $v (\Sigma Q)^\omega$ a \emph{cylinder set} and an infinite sequence $q_0 a_1 q_1 a_2 q_2 \cdots \in Q (\Sigma Q)^\omega$ a \emph{run}.
To~$\H$ and an \emph{initial probability distribution} $\pi \in [0,1]^Q$ we associate the probability space $(Q(\Sigma Q)^\omega, \GG^*, \PP_\pi)$ where $\GG^*$ is the $\sigma$-algebra generated by the cylinder sets and $\PP_\pi$ is the unique probability measure with $\PP_\pi(q_0 a_1 q_1 \cdots a_n q_n (\Sigma Q)^\omega) = \pi_{q_0} \prod_{i=1}^{n} \Psi(a_i)_{q_{i-1},q_i}$.
As the states are often irrelevant, for $E \subseteq \Sigma^\omega$ and $\uparr{E} := \{q_0 a_1 q_1 a_2 q_2 \cdots \mid a_1 a_2 \cdots \in E\} \in \GG^*$ we view also $E$ as an event and may write $\PP_{\pi}(E)$ to mean $\PP_{\pi}(\uparr{E})$.
In particular, for $w \in \Sigma^*$ we have $\PP_{\pi}(w \Sigma^\omega) = \| \pi \Psi(w) \|$.
For $E \subseteq \Sigma^\omega$ we write $\1_E$ for the indicator random variable with $\1_E(w) = 1$ if $w \in E$ and $\1_E(w) = 0$ if $w \not\in E$.
By~$\EE_\pi$ we denote the expectation with respect to~$\PP_{\pi}$. If $\pi$ is the Dirac distribution on state $q$, then we write $\EE_{q}$.


A Markov chain $(Q,T)$ and an initial distribution $\iota \in [0,1]^Q$ are associated with a probability measure $\PP_{\iota}$ on measurable subsets of $Q^\omega$; the construction of the probability space is similar to HMMs, without the observation alphabet~$\Sigma$.

Let $(Q, \Sigma, \Psi)$ be an HMM and let $\pi_1, \pi_2$ be two initial distributions. The \emph{total variation distance} is
$d(\pi_1, \pi_2) := \sup_{\uparr{E} \in \GG^*} | \PP_{\pi_1}(E) - \PP_{\pi_2}(E) |$.
This supremum is actually a maximum due to Hahn's decomposition theorem; i.e., there is an event $E \subseteq \Sigma^\omega$ such that $d(\pi_1, \pi_2) = \PP_{\pi_1}(E) - \PP_{\pi_2}(E)$.
We call $\pi_1$ and $\pi_2$ \emph{distinguishable} if $d(\pi_1, \pi_2) = 1$.
Distinguishability is decidable in polynomial time~\cite{kief14}.



Let $\pi_1$ and $\pi_2$ be initial distributions.
For $n \in \NN$, the \emph{likelihood ratio} $L_n$ is a random variable on $\Sigma^\omega$ given by $L_n(w) = \frac{\| \pi_1 \Psi(w_n) \|}{\| \pi_2 \Psi(w_n) \|}$.
Based on results from~\cite{kief14} we have the following lemma.
\begin{restatable}{lemma}{convergenceLn}\label{convergenceLn}
Let $\pi_1, \pi_2$ be initial distributions.
\begin{enumerate}
\item $\lim_{n \rightarrow \infty} L_n$ exists $\PP_{\pi_2}$-almost surely and lies in $[0,\infty)$.
\item $\lim_{n \rightarrow \infty} L_n = 0 \ \ \PP_{\pi_2}$-almost surely if and only if $\pi_1$ and $\pi_2$ are distinguishable.
\end{enumerate}
\end{restatable}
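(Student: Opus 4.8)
The plan is to prove the two parts by the standard martingale analysis of likelihood ratios: Part~1 follows from the martingale convergence theorem, and Part~2 follows once the almost-sure limit is identified with the density of the absolutely continuous part of $\PP_{\pi_1}$ with respect to $\PP_{\pi_2}$.

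First I would set up the filtration $(\mathcal{F}_n)_{n \ge 0}$, where $\mathcal{F}_n \subseteq \GG^*$ is generated by the length-$n$ prefixes, and note that $L_n$ is $\mathcal{F}_n$-measurable and well defined on the event $\Omega_0 := \{w : \|\pi_2\Psi(w_n)\| > 0 \text{ for all } n\}$, which carries full $\PP_{\pi_2}$-mass. The martingale identity $\EE_{\pi_2}[L_{n+1} \mid \mathcal{F}_n] = L_n$ is then a direct computation from $\PP_{\pi_i}(u\Sigma^\omega) = \|\pi_i\Psi(u)\|$: conditioning on a prefix $w_n = u$, the next letter is $a$ with $\PP_{\pi_2}$-probability $\|\pi_2\Psi(ua)\| / \|\pi_2\Psi(u)\|$, and weighting $L_{n+1}(ua)$ accordingly gives $\sum_{a} \|\pi_1\Psi(ua)\| / \|\pi_2\Psi(u)\|$, which collapses to $\|\pi_1\Psi(u)\|/\|\pi_2\Psi(u)\| = L_n(u)$ because $\sum_a \Psi(a)$ is stochastic. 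Since $L_n \ge 0$ and $\EE_{\pi_2}[L_n] = L_0 = \|\pi_1\|/\|\pi_2\| = 1$, the process $(L_n)$ is a nonnegative $\PP_{\pi_2}$-martingale, so Doob's martingale convergence theorem gives that $L_\infty := \lim_n L_n$ exists $\PP_{\pi_2}$-almost surely, and Fatou's lemma forces $\EE_{\pi_2}[L_\infty] \le 1$, hence $L_\infty \in [0,\infty)$ almost surely. This is Part~1.

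For Part~2 the key bridge is that $L_\infty = d\PP_{\pi_1}^{\mathrm{ac}} / d\PP_{\pi_2}$, the Radon-Nikodym derivative of the absolutely continuous part of $\PP_{\pi_1}$ with respect to $\PP_{\pi_2}$ on $\GG^*$; in particular $\EE_{\pi_2}[L_\infty] = \PP_{\pi_1}^{\mathrm{ac}}(\Sigma^\omega)$. Granting this, the equivalence is a short chain. On the one hand, $\lim_n L_n = 0$ $\PP_{\pi_2}$-almost surely says precisely that $d\PP_{\pi_1}^{\mathrm{ac}}/d\PP_{\pi_2} = 0$, i.e.\ $\PP_{\pi_1}^{\mathrm{ac}} = 0$, i.e.\ $\PP_{\pi_1} \perp \PP_{\pi_2}$. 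On the other hand, I would invoke the elementary fact that $d(\pi_1,\pi_2) = 1$ holds if and only if $\PP_{\pi_1} \perp \PP_{\pi_2}$: since the supremum defining $d$ is attained by Hahn's decomposition, the value $1$ yields a maximising event $E$ with $\PP_{\pi_1}(E) = 1$ and $\PP_{\pi_2}(E) = 0$, and conversely mutual singularity exhibits such an $E$. Chaining the two gives $\lim_n L_n = 0$ $\PP_{\pi_2}$-a.s.\ $\iff \PP_{\pi_1} \perp \PP_{\pi_2} \iff d(\pi_1,\pi_2) = 1$, which is distinguishability.

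The main obstacle is the identification $L_\infty = d\PP_{\pi_1}^{\mathrm{ac}}/d\PP_{\pi_2}$, because the finite-level measures need not satisfy $\PP_{\pi_1}|_{\mathcal{F}_n} \ll \PP_{\pi_2}|_{\mathcal{F}_n}$: a prefix $u$ can have $\|\pi_2\Psi(u)\| = 0$ while $\|\pi_1\Psi(u)\| > 0$, so $L_n$ is genuinely undefined there. The remedy is to observe that all such prefixes together are $\PP_{\pi_2}$-null and contribute entirely to the singular part of $\PP_{\pi_1}$; restricting to $\Omega_0$ recovers the clean picture in which $L_n$ is the density of $\PP_{\pi_1}|_{\mathcal{F}_n}$ against $\PP_{\pi_2}|_{\mathcal{F}_n}$, and the martingale form of the Lebesgue decomposition then identifies the almost-sure limit of these densities with $d\PP_{\pi_1}^{\mathrm{ac}}/d\PP_{\pi_2}$, the singular contribution being accounted for separately in the equality $\EE_{\pi_2}[L_\infty] = \PP_{\pi_1}^{\mathrm{ac}}(\Sigma^\omega)$. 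As a cross-check and potential shortcut, \cite{kief14} already records the existence of $\lim_n \ln L_n$ and the link between distinguishability and $\ln L_n \to -\infty$ under $\PP_{\pi_2}$, and translating $\ln L_n \to -\infty$ into $L_n \to 0$ yields Part~2 directly.
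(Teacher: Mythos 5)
Your proposal is correct, but for Part~2 it takes a genuinely different route from the paper. For Part~1 the paper simply cites \cite[Proposition~6]{kief14}; your argument --- the martingale identity $\EE_{\pi_2}[L_{n+1}\mid \mathcal{F}_n]=L_n$ via stochasticity of $\sum_{a}\Psi(a)$, then Doob's convergence theorem for the nonnegative mean-one martingale $(L_n)$ --- is the standard proof of that cited fact and is sound. For Part~2 the paper does not pass through the Lebesgue decomposition at all: it invokes the identity $1-d(\pi_1,\pi_2)=\lim_{n}\sum_{w\in\Sigma^n}\min\{\|\pi_1\Psi(w)\|,\|\pi_2\Psi(w)\|\}$ from \cite[Theorem~7]{kief14}, rewrites the right-hand side as $\EE_{\pi_2}\bigl[\min\{L_n,1\}\bigr]$, exchanges limit and expectation by bounded convergence (using Part~1), and concludes that $d(\pi_1,\pi_2)=1$ iff $\EE_{\pi_2}\bigl[\lim_n\min\{L_n,1\}\bigr]=0$ iff $L_n\to 0$ $\PP_{\pi_2}$-a.s. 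Your route instead identifies $L_\infty$ with the density $d\PP_{\pi_1}^{\mathrm{ac}}/d\PP_{\pi_2}$ of the absolutely continuous part and combines this with the equivalence of $d(\pi_1,\pi_2)=1$ and mutual singularity (via Hahn decomposition). Both are valid. The paper's argument is shorter because it leans on the cited approximation of total variation distance by finite-horizon sums; yours is self-contained modulo the martingale form of the Lebesgue decomposition, which is standard but not entirely trivial here precisely for the reason you flag --- $\PP_{\pi_1}|_{\mathcal{F}_n}$ need not be absolutely continuous with respect to $\PP_{\pi_2}|_{\mathcal{F}_n}$ --- and your treatment of that point (the offending prefixes are $\PP_{\pi_2}$-null and feed only the singular part) is the right fix. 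If you wanted a fully rigorous write-up you would either prove that decomposition lemma or cite a textbook version of it; alternatively, the shortcut you mention at the end (translating the statements of \cite{kief14} about $\ln L_n$) collapses to essentially the paper's own proof.
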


\begin{example}\label{sleepcycles}
We illustrate convergence of the likelihood ratio using an example from \cite{rockhart13} where the authors use HMMs to model sleep cycles.
They took measurements of 51 healthy and 51 diseased individuals and using electrodes attached to the scalp, they read electrical signal data as part of an electroencephalography (EEG) during sleep.
They split the signal into 30 second intervals and mapped each interval onto the simplex $\Delta^3 = \{(x_1, x_2, x_3, x_4) \in [0,1]^4 \mid \sum_{i = 1}^4 x_i = 1\}$.
For each individual this results in a time series of points in $\Delta^3$.
They modelled this data using two HMMs, each with 5 states, for healthy and diseased individuals using a numerical maximum likelihood estimate.
Each state is associated with a probability density function describing the distribution of observations in~$\Delta^3$.
We describe in \cref{app:sleepcycles} how we obtained from this an HMM $\H = (Q, \Sigma, \Psi)$ with (finite) observation alphabet $\Sigma = \{a_1, \ldots, a_5\}$ and two initial distributions $\pi_1, \pi_2$ corresponding to healthy and diseased individuals, respectively.
Using the algorithm from \cite{kief14} one can show that $\pi_1$ and $\pi_2$ are distinguishable.

We sampled runs of~$\H$ started from $\pi_1$ and $\pi_2$ and plotted the corresponding sequences of $\ln L_n$.
We refer to each of these two plots as a \emph{log-likelihood plot}; see \Cref{loglikes}.
\begin{center}
	\begin{figure}[ht]
		\includegraphics[width=\textwidth/2]{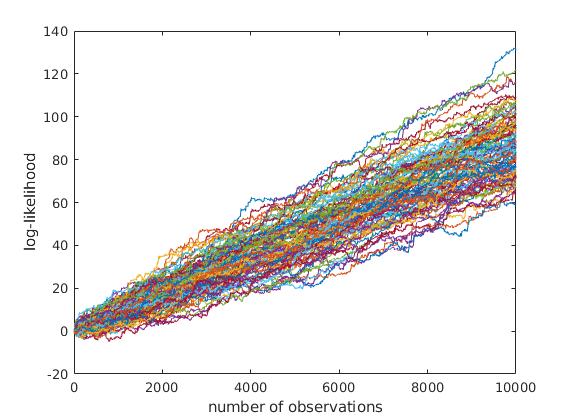}	
		\includegraphics[width=\textwidth/2]{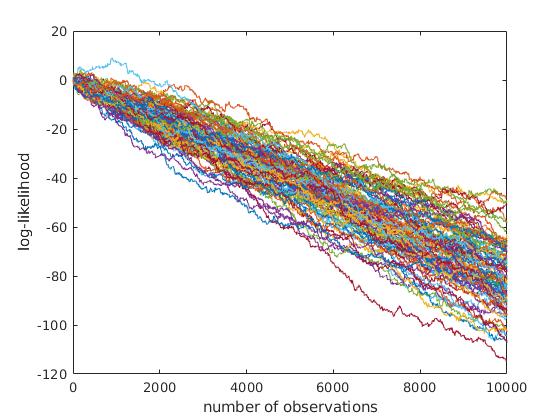}
		\caption{The two images show two log-likelihood plots of sample runs produced by $\pi_1$ and $\pi_2$, respectively.}\label{loglikes}
	\end{figure}
\end{center}
By \Cref{convergenceLn}.2 it follows that $\ln L_n$ converges $\PP_{\pi_1}$-a.s.\ (almost-surely) to $\infty$ and $\PP_{\pi_2}$-a.s.\ to~$-\infty$.
This is affirmed by \Cref{loglikes}.
Both log-likelihood plots also appear to follow a particular slope.
This suggests that we can distinguish between words produced by $\pi_1$ and~$\pi_2$ by tracking the value of $\ln L_n$ to see whether it crosses a lower or upper threshold.
This is the intuition behind the \emph{Sequential Probability Ratio Test} (SPRT).
\end{example}

\section{Sequential Probability Ratio Test}\label{liexpsubsect}

Fix an HMM $H = (Q,\Sigma,\Psi)$ for the rest of the paper.
Given initial distributions $\pi_1, \pi_2$ and error bounds $\alpha, \beta \in (0,1)$, the SPRT runs as follows.
It continues to read observations and computes the value of $\ln L_n$ until $\ln L_n$ leaves the interval $[A,B]$, where $A := \ln \frac{\alpha}{1 - \beta}$ and $B := \ln \frac{1 - \alpha}{\beta}$.
If $\ln L_n \leq A$ the test outputs ``$\pi_2$'' and if $\ln L_n \geq B$ the test outputs ``$\pi_1$''.
We may view the SPRT as a random variable $\SPRT_{\alpha, \beta} : \Sigma^\omega \rightarrow \{\pi_1, \pi_2, ?\}$, where $?$ denotes that the SPRT does not terminate, i.e., $\ln L_n \in [A,B]$ for all~$n$.
We have the following correctness property.
\begin{restatable}{proposition}{sprtcorrectness}\label{sprtcorrectness}
Suppose $\pi_1$ and $\pi_2$ are distinguishable.
Let $\alpha, \beta \in (0,1)$.
By choosing $A = \ln \frac{\alpha}{1 - \beta}$ and $B = \ln \frac{1 - \alpha}{\beta}$, we have $\PP_{\pi_1} (\SPRT_{\alpha, \beta} = \pi_2) \leq \alpha$ and $\PP_{\pi_2} (\SPRT_{\alpha, \beta} = \pi_1) \leq \beta$.
\end{restatable}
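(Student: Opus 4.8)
The plan is to run Wald's classical SPRT argument inside the cylinder-set probability space of the HMM; the whole proof rests on a single change of measure between $\PP_{\pi_1}$ and $\PP_{\pi_2}$ that is controlled, on the runs where the test stops, precisely by the threshold defining the stopping rule. First I would make the stopping behaviour measurable. Let $N(w)$ be the first $n$ with $\ln L_n(w)\notin[A,B]$, so that $\{\SPRT_{\alpha,\beta}=\pi_2\}=\bigcup_{n\ge 1}\{w : N(w)=n,\ \ln L_n(w)\le A\}$, and the $n$-th set in this union is a finite disjoint union of cylinders $v\Sigma^\omega$ with $|v|=n$, namely those prefixes along which $\ln L_m\in(A,B)$ for all $m<n$ and $\ln L_n\le A$. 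Since $L_m(w)=\|\pi_1\Psi(w_m)\|/\|\pi_2\Psi(w_m)\|$ depends only on the prefix $w_m$, this is indeed a union of length-$n$ cylinders; the same decomposition applies to $\{\SPRT_{\alpha,\beta}=\pi_1\}$. This bookkeeping is routine but has to be set up carefully.

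The crux is the change of measure on a single stopping cylinder. On any $v\Sigma^\omega$ with $|v|=n$ appearing in $\{\SPRT_{\alpha,\beta}=\pi_2\}$ the stopping rule forces $L_n\le e^{A}$ throughout $v\Sigma^\omega$, i.e.\ $\|\pi_1\Psi(v)\|\le e^{A}\|\pi_2\Psi(v)\|$, which by $\PP_{\pi_i}(v\Sigma^\omega)=\|\pi_i\Psi(v)\|$ reads $\PP_{\pi_1}(v\Sigma^\omega)\le e^{A}\PP_{\pi_2}(v\Sigma^\omega)$. Summing over the disjoint cylinders constituting $\{\SPRT_{\alpha,\beta}=\pi_2\}$ and bounding $\PP_{\pi_2}(\SPRT_{\alpha,\beta}=\pi_2)\le 1$ gives $\PP_{\pi_1}(\SPRT_{\alpha,\beta}=\pi_2)\le e^{A}=\tfrac{\alpha}{1-\beta}$. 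The symmetric computation on $\{\SPRT_{\alpha,\beta}=\pi_1\}$, where $L_n\ge e^{B}$ forces $\PP_{\pi_2}(v\Sigma^\omega)\le e^{-B}\PP_{\pi_1}(v\Sigma^\omega)$, gives $\PP_{\pi_2}(\SPRT_{\alpha,\beta}=\pi_1)\le e^{-B}=\tfrac{\beta}{1-\alpha}$. These are exactly the quantities the thresholds $A=\ln\tfrac{\alpha}{1-\beta}$ and $B=\ln\tfrac{1-\alpha}{\beta}$ were chosen to control.

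To go beyond the trivial factor $\PP_{\pi_i}(\SPRT_{\alpha,\beta}=\pi_i)\le 1$ I would invoke distinguishability: applying \Cref{convergenceLn}.2 in both orderings of $\pi_1,\pi_2$ (distinguishability is symmetric) shows $\ln L_n\to-\infty$ $\PP_{\pi_2}$-a.s.\ and $\ln L_n\to+\infty$ $\PP_{\pi_1}$-a.s., so the test terminates almost surely under both measures and the correct-output probabilities equal $1$ minus the respective error. Feeding this back sharpens the two bounds into the Wald relations $\tfrac{\PP_{\pi_1}(\SPRT_{\alpha,\beta}=\pi_2)}{1-\PP_{\pi_2}(\SPRT_{\alpha,\beta}=\pi_1)}\le e^{A}$ and $\tfrac{\PP_{\pi_2}(\SPRT_{\alpha,\beta}=\pi_1)}{1-\PP_{\pi_1}(\SPRT_{\alpha,\beta}=\pi_2)}\le e^{-B}$. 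The step I expect to be the main obstacle is not any single inequality but the rigorous treatment of the first-passage event as a countable disjoint union of cylinders on which the threshold inequality holds uniformly, together with controlling the overshoot at the stopping time: what the change of measure yields cleanly are the Wald relations with the boundary values $e^{A}=\tfrac{\alpha}{1-\beta}$ and $e^{-B}=\tfrac{\beta}{1-\alpha}$, and it is precisely through the design choice of $A$ and $B$ that these certify the nominal error levels $\alpha$ and $\beta$ asserted in the statement.
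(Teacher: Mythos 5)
Your proposal follows essentially the same route as the paper's own proof: decompose the stopping event into first-passage cylinders, apply the change of measure $\PP_{\pi_1}(v\Sigma^\omega) = L_{|v|}(v)\,\PP_{\pi_2}(v\Sigma^\omega)$ together with the threshold bound holding on each such cylinder, and invoke distinguishability (via \Cref{convergenceLn}.2) for almost-sure termination to arrive at the two Wald relations. Like the paper, you then pass from those relations to the stated bounds $\le\alpha$ and $\le\beta$ without further argument --- strictly they yield $x(1-\beta)\le\alpha(1-y)$ and $y(1-\alpha)\le\beta(1-x)$, hence $x+y\le\alpha+\beta$ rather than the individual bounds --- but this is precisely the step the paper's proof also takes, so your attempt matches it in both method and level of rigour.
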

In the following we consider the SPRT with respect to the measure $\PP_{\pi_2}$.
This is without loss of generality as there is a dual version of the $\SPRT$, say $\overline{\SPRT}$ with $\overline{L}_n = 1/L_n$ instead of $L_n$, such that $ \overline{\SPRT}_{\beta, \alpha} = \SPRT_{\alpha, \beta}$.
Define the stopping time
\begin{equation*}
N_{\alpha, \beta} \ := \ \min \{n \in \NN \mid \ln L_n \not\in [A, B]\} \ \in \ \NN \cup \{\infty\}\,.
\end{equation*}
We have that $N_{\alpha, \beta}$ is monotone decreasing in the sense that for $\alpha \leq \alpha'$ and $\beta \leq \beta'$ we have $N_{\alpha, \beta} \geq N_{\alpha', \beta'}$.
When $\pi_1$ and $\pi_2$ are distinguishable, $N_{\alpha, \beta}$ is $\PP_{\pi_2}$-a.s.\ finite by \Cref{convergenceLn}.2.

\subsection{Expectation of $N_{\alpha, \beta}$}
Consider the two-state HMM where $p_1 \neq p_2$.
\begin{center}
	\begin{tikzpicture}[scale=2.3,LMC style]
	\node[state] (s0) at (-1.25,0) {$s_1$};
	\node[state] (s1) at (1.25,0) {$s_2$};
	
	\path[->] (s0) edge [loop,out=200,in=160,looseness=10] node[pos=0.5,left] {$p_1 : a$} (s0);
	\path[->] (s0) edge [loop,out=20,in=340,looseness=10] node[pos=0.5,right] {$(1 - p_1) : b$} (s0);
	\path[->] (s1) edge [loop,out=200,in=160,looseness=10] node[pos=0.5,left] {$p_2 : a$} (s1);
	\path[->] (s1) edge [loop,out=20,in=340,looseness=10] node[pos=0.5,right] {$(1 - p_2) : b$} (s1);
	\end{tikzpicture}
\end{center}
(The Dirac distributions of) $s_1$ and $s_2$ are distinguishable. Further, the increments $\ln L_{n + 1} - \ln L_n$ are independent and identically distributed (i.i.d.) and $0 > \EE_{s_2}[\ln L_{n+1} - \ln L_n] = p_2 \ln \frac{p_1}{p_2} + (1 - p_2)\ln \frac{1 - p_1}{1 - p_2} =: \ell$. Intuitively as $\ell$ gets more negative, the HMMs become more different.\footnote{In fact, $\ell$ is the \emph{KL-divergence} of the distributions $f_1, f_2$ where $f_i(a) = p_i$ and $f_i(b) = 1 - p_i$ for $i = 1, 2$.} Indeed, Wald~\cite{wald45} shows that the expected stopping time $\EE_{s_2}[N_{\alpha, \beta}]$ and $\ell$ are inversely proportional:
\begin{equation}\label{singletonexptime}
\EE_{s_2}[N_{\alpha, \beta}] = \frac{\beta \ln \frac{1 - \alpha}{\beta} + (1 - \beta) \ln \frac{\alpha}{1 - \beta}}{\ell}.
\end{equation}
This Wald formula cannot hold in general for (multi-state) HMMs. The increments $\ln L_{n+1} - \ln L_n$ need not be independent and $\EE_{s_2}[\ln L_{n+1} - \ln L_n]$ can be different for different $n$. Further, $| \ln L_{n+1} - \ln L_n |$ can be unbounded; cf.~\cite[Example~6]{kief16}.

Nevertheless, in \Cref{loglikes} we observed that $\ln L_n$ appears to decrease linearly (on the $\pi_2$ plot).
Indeed, we show in \Cref{liexplimits} below that the limit $\liexp$ exists $\PP_{\pi_2}$-almost surely.
Intuitively it corresponds to the average slope of the log-likelihood plot for $\pi_2$.
In the two-state case, there is a simple proof of this using the law of large numbers:
\begin{equation*}
\liexp = \lim_{n \rightarrow \infty}\frac1n \sum_{i = 0}^{n - 1} [\ln L_{i + 1} - \ln L_i ] = \EE_{\pi_2} [\ln L_1 - \ln L_0] = \ell \ \ \PP_{\pi_2}\text{-a.s.}
\end{equation*}
The number $\ell$ is called a likelihood exponent, as defined generally in the following definition.
\begin{definition}
For initial distributions $\pi_1, \pi_2$, a number $\ell \in [-\infty, 0]$ is a \emph{likelihood exponent} if $\PP_{\pi_2}(\liexp = \ell) > 0$.
\end{definition}
By \cref{convergenceLn}.1 we have $\PP_{\pi_2}(\liexp > 0) = 0$, as $\PP_{\pi_2}(\lim_{n \rightarrow \infty} L_n < \infty) = 1$. Hence, we may restrict likelihood exponents to~$[-\infty, 0]$.
%
We write $\Lambda_{\pi_1, \pi_2} \subseteq [-\infty,0]$ for the set of likelihood exponents for $\pi_1, \pi_2$ and define $\Lambda := \bigcup_{\pi_1, \pi_2} \Lambda_{\pi_1, \pi_2}$; i.e., $\Lambda$ depends only on the HMM~$\H$.
For $\ell \in \Lambda$ we define the event $E_\ell = \{\liexp = \ell\}$.

\begin{example}\label{estimatingsleepcyclelikelihood}
In the case of \Cref{sleepcycles} we have $\Lambda_{\pi_1, \pi_2} = \{\ell\}$ where the slope of the right hand side of \Cref{loglikes} suggests that $\ell \approx -\frac{80}{10000} = -0.008$.
\end{example}

\begin{example}\label{multilimitliexp}
Even for fixed $\pi_1, \pi_2$ there may be multiple likelihood exponents.
Consider the following HMM with initial Dirac distributions $\pi_1 = e_{s_1}$ and $\pi_2 = e_{s_4}$.
\begin{center}
	\begin{tikzpicture}[scale=2.3,LMC style]
\useasboundingbox (-2.5,-0.35) rectangle (2,0.35);
	\node[state] (s0) at (-1.25,0) {$s_1$};
	\node[state] (s1) at (-2,0) {$s_2$};
	\node[state] (s2) at (-0.5,0) {$s_3$};
	\node[state] (s3) at (1.5,0) {$s_4$};
	
	\path[->] (s0) edge node[pos=0.5,above] {$\frac14 a$} (s1);
	\path[->] (s0) edge node[pos=0.5,above] {$\frac34 b$} (s2);
	
	\path[->] (s1) edge [loop,out=220,in=260,looseness=10] node[pos=0.5,left,yshift=5,xshift=-1] {$\frac23 b$} (s1);
	\path[->] (s1) edge [loop,out=100,in=140,looseness=10] node[pos=0.5,left,yshift=-5,xshift=-1] {$\frac13 a$} (s1);
	
	\path[->] (s2) edge [loop,out=-40,in=-80,looseness=10] node[pos=0.5,right,yshift=5,xshift=1] {$\frac12 b$} (s2);
	\path[->] (s2) edge [loop,out=80,in=40,looseness=10] node[pos=0.5,right,yshift=-5,xshift=1] {$\frac12 a$} (s2);
	
	\path[->] (s3) edge [loop,out=200,in=160,looseness=10] node[pos=0.5,left] {$\frac12 a$} (s3);
	\path[->] (s3) edge [loop,out=20,in=340,looseness=10] node[pos=0.5,right] {$\frac12 b$} (s3);
	\end{tikzpicture}
\end{center}
We observe two different likelihood exponents depending on the first letter produced.
If the first letter is~$a$ then $\ln L_{n + 1} - \ln L_n$ are i.i.d.\ for $n \ge 1$ and $\liexp = \frac12 \ln \frac{1/3}{1/2} +  \frac12 \ln \frac{2/3}{1/2} = \frac12 \ln \frac89 =: \ell$ like the two-state example above.
If the first letter is $b$ then $L_n = \frac32$ for all $n \ge 1$ and $\liexp = 0$.
Thus, $\Lambda_{\pi_1,\pi_2} = \{\ell, 0\}$ and $\PP_{\pi_2}(E_{\ell}) = \PP_{\pi_2}(E_0) = \frac12$.
\end{example}

The following theorem is perhaps the most fundamental contribution of this paper.
\begin{restatable}{theorem}{liexplimits}\label{liexplimits}
For any initial distributions $\pi_1, \pi_2$ the limit $\liexp$ exists $\PP_{\pi_2}$-almost surely.
Furthermore, we have $|\Lambda | \leq |Q|^2+1$.	
\end{restatable}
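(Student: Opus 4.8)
The plan is to read $\frac1n \ln L_n$ as the difference of two growth rates of the random matrix product $\Psi(a_1)\cdots\Psi(a_n)$ and to control both with a multiplicative ergodic theorem. Writing $\mu^{(i)}_k := \pi_i\Psi(w_k)/\|\pi_i\Psi(w_k)\|$ for the normalised distributions, I would first record the telescoping identity
\begin{equation*}
\ln L_n \ = \ \sum_{k=0}^{n-1}\Big(\ln\|\mu^{(1)}_k\Psi(a_{k+1})\| - \ln\|\mu^{(2)}_k\Psi(a_{k+1})\|\Big),
\end{equation*}
so that $\frac1n\ln L_n = \frac1n\ln\|\pi_1\Psi(w_n)\| - \frac1n\ln\|\pi_2\Psi(w_n)\|$. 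Both summands are per-vector growth rates of $v\Psi(w_n)$ for the fixed vectors $v=\pi_1,\pi_2$, driven by the \emph{same} random letters; and under $\PP_{\pi_2}$ the letter sequence is exactly the observation process of the HMM, i.e.\ a deterministic function of the hidden-state Markov chain with transition matrix $\sum_{a}\Psi(a)$ started from $\pi_2$.

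For the almost-sure existence, I would reduce to the recurrent regime. The hidden chain of the $\pi_2$-process reaches, $\PP_{\pi_2}$-almost surely and after finitely many steps, a bottom SCC $B$ of the embedded Markov chain; since a finite prefix does not affect the Ces\`aro limit, it suffices to treat each event ``$B$ is reached'' separately. Conditioned on such an event the emitted letters form (asymptotically) a stationary ergodic sequence, a function of the unique stationary measure of the chain restricted to~$B$, so the products $\Psi(a_1)\cdots\Psi(a_n)$ form a stationary ergodic matrix cocycle on $\RR^Q$. Applying the multiplicative ergodic theorem of Oseledets, for almost every realisation and every fixed vector $v$ the limit $\lim_n \frac1n\ln\|v\Psi(w_n)\|$ exists and equals the Lyapunov exponent of the Oseledets subspace containing~$v$; by ergodicity this filtration is almost surely constant, so the limits for $v=\pi_1$ and $v=\pi_2$ exist, and hence $\liexp$ exists $\PP_{\pi_2}$-almost surely.

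For the bound $|\Lambda|\le|Q|^2+1$, I would count the possible limit values, exploiting that the cocycle over each $B$ depends only on~$\H$ and not on $\pi_1,\pi_2$. Fixing a bottom SCC $B$, the denominator rate $\lim_n\frac1n\ln\|\pi_2\Psi(w_n)\|$ is a single number determined by~$B$ (the negative entropy rate of~$B$), whereas the numerator rate $\lim_n\frac1n\ln\|\pi_1\Psi(w_n)\|$ is one of the Lyapunov exponents of the $|Q|\times|Q|$ cocycle over~$B$, of which there are at most~$|Q|$. Hence each $B$ contributes at most $|Q|$ distinct finite differences, and since the bottom SCCs are pairwise disjoint there are at most $|Q|$ of them, giving at most $|Q|^2$ finite likelihood exponents across all $\pi_1,\pi_2$. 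The only remaining value is $-\infty$, which occurs exactly when $\pi_1\Psi(w_n)$ vanishes for some finite~$n$ (entrywise lower bounds on nonzero products preclude strictly super-exponential decay while the vector is nonzero, so a finite exponent is bounded below); counting this value once more yields $|\Lambda|\le|Q|^2+1$.

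The hard part will be the second step: passing from the norm growth $\frac1n\ln\|\Psi(w_n)\|$, for which Furstenberg--Kesten or Kingman subadditivity would already suffice, to the \emph{per-vector} growth $\frac1n\ln\|v\Psi(w_n)\|$ for the specific vectors $\pi_1,\pi_2$, which could a priori oscillate among several Lyapunov exponents. This is where the full strength of Oseledets' theorem together with ergodicity of the letter process on~$B$ is needed, to guarantee that $\pi_1$ and $\pi_2$ almost surely land in fixed members of an almost surely constant Oseledets filtration. A secondary difficulty is making the reduction to~$B$ rigorous when $\supp(\pi_1)$ meets states outside those visited by the $\pi_2$-process, since the numerator is still computed with the full matrices $\Psi(a)$; I would handle this by keeping the cocycle on all of $\RR^Q$ while restricting only the driving measure to~$B$, and by verifying that the initial transient before absorption into~$B$ contributes $o(n)$ and hence drops out of the limit.
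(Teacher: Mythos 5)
Your route is genuinely different from the paper's and is viable in outline, so let me first compare. The paper does not apply Oseledets over the bottom SCCs of the embedded chain; it works in the product graph $G_{\H,\H}$ on $Q\times Q$, identifies at most $|Q|^2$ \emph{right-bottom} SCCs $R$ (SCCs whose second projection is a bottom SCC of the embedded chain), and attaches to each $R$ two explicitly constructed, strongly connected Lyapunov systems $\S_R^1,\S_R^2$, each with a \emph{single} exponent, giving the numerator and denominator rates; the count is then (number of right-bottom SCCs) times (one difference each) plus one for $-\infty$. Your count is (at most $|Q|$ bottom SCCs) times (at most $|Q|$ Oseledets exponents of the full $|Q|\times|Q|$ cocycle) plus one. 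Both yield $|Q|^2+1$, but the paper's construction additionally produces polynomial-time computable Lyapunov systems representing the exponents (\cref{thm:likelihood-to-lyapunov}), which your argument does not; conversely, yours, if completed, is shorter for the bare existence-and-counting statement.

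Two steps need repair. First, ``by ergodicity this filtration is almost surely constant'' is false: ergodicity of the base makes the Lyapunov \emph{spectrum} deterministic, but the Oseledets filtration of a one-sided cocycle is in general a genuinely random, future-measurable object. Fortunately you do not need constancy: for a.e.\ realisation the limit $\lim_n\frac1n\ln\|v\Psi(w_n)\|$ exists for every $v$ simultaneously and takes one of at most $|Q|$ deterministic values, which is all that existence and the count require (for nonnegative $v$ you can also write $\|v\Psi(w_n)\|=\sum_q v_q\|e_q\Psi(w_n)\|$ and take the maximum of the per-state rates over $q\in\supp(v)$). Second, and more seriously, your bound of $|Q|$ differences per bottom SCC $B$ hinges on the denominator rate being a \emph{single deterministic} number $-h(B)$; Oseledets alone only says it is one of up to $|Q|$ values, possibly at random, which would give $|Q|^3$ overall. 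Pinning it to the entropy rate needs a Shannon--McMillan--Breiman argument for the observation process started away from stationarity, plus a domination argument when $\pi_2$ charges several bottom SCCs or transient states (the cross-ratios $\PP(w_n\mid B')/\PP(w_n\mid B)$ are nonnegative supermartingales under the $B$-conditioned law, hence a.s.\ convergent, so they contribute $o(n)$ to the logarithm). This claim is true but is the real content of the denominator half and cannot be left as an aside. The remaining technicalities you flag (non-stationarity of the conditioned letter process, the $o(n)$ transient, and $-\infty$ arising only from $\pi_1\Psi(w_n)=\vec{0}$ because nonzero entries of length-$n$ products are bounded below by $p_{\min}^{\,n}$) are identified and handled correctly.
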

It follows from a stronger theorem, \cref{thm:likelihood-to-lyapunov}, which we prove in \cref{sec:rep}.

Returning to the SPRT, we investigate how $\liexp$ influences the performance of the SPRT for small $\alpha$ and~$\beta$.
Intuitively we expect a steeper slope in the likelihood plot (cf.~\Cref{loglikes}) to lead to faster termination.
In the two-state case, Wald's formula \eqref{singletonexptime} becomes%
\begin{equation}\label{asymptoticwaldeq}
\EE_{s_2}[N_{\alpha, \beta}] = \frac{\beta \ln \frac{1 - \alpha}{\beta} + (1 - \beta) \ln \frac{\alpha}{1 - \beta}}{\ell} \sim \frac{\ln \alpha}{\ell} \ (\text{as } \alpha, \beta \rightarrow 0),
\end{equation}
where we use the notation $\sim$ defined as follows.
\stefan{Please check. Perhaps move.}
For functions $f,g : (0,\infty) \times (0, \infty) \to (0,\infty)$ we write ``$f(x,y) \sim g(x,y)$ (as $x,y \to 0$)'' to denote that for all $\epsilon > 0$ there is $\delta > 0$ such that for all $x,y \in (0,\delta)$ we have $f(x,y)/g(x,y) = [1-\epsilon,1+\epsilon]$.

In \Cref{asymptoticwald} below we generalise \Cref{asymptoticwaldeq} to arbitrary HMMs.
Indeed a very similar asymptotic identity holds.
In the case that $\Lambda = \{\ell\}$ and $\ell \in (-\infty, 0)$ we have $\EE_{s_2}[N_{\alpha, \beta}] \sim \frac{\ln \alpha}{\ell}$ as $\alpha, \beta \rightarrow 0$.
If $|\Lambda| > 1$ then we condition our expectation on $\liexp$.
\begin{restatable}[Generalised Wald Formula]{theorem}{asymptoticwald}\label{asymptoticwald}
Let $\ell$ be a likelihood exponent and let $\pi_1$ and $\pi_2$ be initial distributions.
\begin{enumerate}
\item If $\ell \in (-\infty, 0)$ then
$\displaystyle
\EE_{\pi_2} \big[ N_{\alpha,\beta} \mid E_{\ell}\big] \sim \frac{\ln \alpha}{\ell} \ \ (\text{as } \alpha, \beta \rightarrow 0)
$.
\item If $\ell = 0$ then there exist $\alpha, \beta > 0$ such that
$\displaystyle
\EE_{\pi_2} \big[ N_{\alpha,\beta} \mid E_\ell\big] = \infty
$.
\item If $\ell = -\infty$ then
$\displaystyle
\sup_{\alpha, \beta}~\EE_{\pi_2} \big[ N_{\alpha,\beta} \mid E_\ell \big] < \infty
$.
\end{enumerate}
\end{restatable}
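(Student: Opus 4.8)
The plan is to handle the three regimes separately, using two ingredients throughout. First, $L_n$ is a nonnegative $\PP_{\pi_2}$-martingale: since $\sum_a \Psi(a)$ is stochastic, a one-line computation gives $\EE_{\pi_2}[L_{n+1}\mid w_n] = \sum_a \frac{\|\pi_2\Psi(w_n a)\|}{\|\pi_2\Psi(w_n)\|}\cdot\frac{\|\pi_1\Psi(w_n a)\|}{\|\pi_2\Psi(w_n a)\|} = L_n$. Second, by \cref{thm:likelihood-to-lyapunov}, conditioned on $E_\ell$ the run enters after a finite transient a bottom strongly connected component of the relevant product Markov chain on which the increments $\ln L_{n+1}-\ln L_n$ form a Markov-additive functional with stationary drift $\ell$; I also use $A=\ln\frac\alpha{1-\beta}\sim\ln\alpha$ and $B\to\infty$ as $\alpha,\beta\to0$. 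For part~1 ($\ell\in(-\infty,0)$) I first establish the almost-sure asymptotics: on $E_\ell$ we have $\frac1n\ln L_n\to\ell<0$, so $\ln L_n\to-\infty$ with $\sup_n\ln L_n<\infty$, and once $B$ exceeds this (random) supremum the test stops at the lower-threshold passage time $T_A=\min\{n:\ln L_n\le A\}$; sandwiching $\ln L_n$ between $(\ell\pm\epsilon)n$ for large $n$ gives $\frac{A}{\ell-\epsilon}\le T_A\le\frac{A}{\ell+\epsilon}$ eventually, hence $N_{\alpha,\beta}\cdot\ell/\ln\alpha\to1$ $\PP_{\pi_2}$-almost surely on $E_\ell$. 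To promote this to the conditional expectation I solve the Poisson equation for the drift on the finite component, obtaining a bounded corrector $h_n$ making $\ln L_n-\ell n+h_n$ a martingale; optional stopping at $N_{\alpha,\beta}$ then yields a Wald-type identity $\ell\,\EE_{\pi_2}[N_{\alpha,\beta}\mid E_\ell]=\EE_{\pi_2}[\ln L_{N_{\alpha,\beta}}\mid E_\ell]+O(1)$, the $O(1)$ absorbing the bounded corrector and the finite transient. It then remains to show $\EE_{\pi_2}[\ln L_{N_{\alpha,\beta}}\mid E_\ell]\sim A$: the event that $B$ is crossed first contributes negligibly (its probability $\to0$ by the negative drift) and the expected overshoot below $A$ is bounded uniformly in $\alpha,\beta$ by a Lorden-type second-moment estimate, hence negligible against $A\to-\infty$; dividing by $\ell$ gives $\EE_{\pi_2}[N_{\alpha,\beta}\mid E_\ell]\sim\ln\alpha/\ell$.

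For part~3 ($\ell=-\infty$) I reduce to a hitting-time computation. The key point is that $\liexp=-\infty$ forces $\ln L_n=-\infty$ at the first finite time $\tau$ with $\|\pi_1\Psi(w_n)\|=0$: otherwise all one-step factors stay positive and both $\frac1n\ln\|\pi_i\Psi(w_n)\|$ converge to finite exponents, contradicting $\ell=-\infty$ (this is where I invoke \cref{thm:likelihood-to-lyapunov}). Since $\ln L_\tau=-\infty\le A$ for every finite $A$, we have $N_{\alpha,\beta}\le\tau$ for all $\alpha,\beta$, whence $\sup_{\alpha,\beta}\EE_{\pi_2}[N_{\alpha,\beta}\mid E_{-\infty}]\le\EE_{\pi_2}[\tau\mid E_{-\infty}]$. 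Finally $\tau$ is the absorption time into the empty support of the finite-state chain tracking $\supp(\pi_1\Psi(w_n))$, so conditioned on absorption ($E_{-\infty}$) it has geometric tails and hence finite mean.

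For part~2 ($\ell=0$) I exhibit positive-probability \emph{trapping}: it suffices to find $\alpha,\beta$ with $\PP_{\pi_2}(N_{\alpha,\beta}=\infty\mid E_0)>0$, since this alone forces the conditional expectation to be infinite. By \cref{convergenceLn}.1 the martingale limit $L_\infty=\lim_n L_n$ exists in $[0,\infty)$, and a likelihood exponent $0$ corresponds via \cref{thm:likelihood-to-lyapunov} to a bottom component on which $L_n$ stays bounded and $L_\infty>0$ with positive probability; on that sub-event $\ln L_n$ never leaves some finite window, so choosing $A$ small and $B$ large enough that the window lies in $(A,B)$ makes $\PP_{\pi_2}(N_{\alpha,\beta}=\infty, E_0)>0$, as required. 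I expect the main obstacle to be part~1: because the single-step increments $\ln L_{n+1}-\ln L_n$ are unbounded, both the Wald-type identity and the overshoot estimate hinge on extracting finite stationary second moments of the increments from the finite-state Markov-additive structure, and the conditioning on $E_\ell$ must be reconciled with the optional-stopping argument by working inside the bottom component supplied by \cref{thm:likelihood-to-lyapunov}.
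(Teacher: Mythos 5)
Your parts 2 and 3 follow the paper's route. Part 3 is essentially the paper's argument (\cref{prop:neginf} plus the observation that $N_\perp$ is an absorption time of the finite support-tracking chain, so its conditional expectation is finite). Part 2 has the right shape but leans on an unproved claim: you assert that likelihood exponent $0$ forces $L_\infty>0$ with positive probability, citing \cref{thm:likelihood-to-lyapunov}. That theorem only gives $\frac1n\ln L_n\to 0$, which is compatible with $L_n\to 0$ subexponentially (e.g.\ like $1/n$), in which case $N_{\alpha,\beta}$ would be a.s.\ finite for every $\alpha,\beta$ and your trapping argument collapses. The paper's \cref{probexp0} closes exactly this hole, and its proof needs the exponential separation bound of \cref{kief16thm5} (if the relevant distributions were distinguishable, $L_n$ would decay exponentially, contradicting exponent $0$; hence they are not distinguishable and $L_\infty>0$ a.s.). You need that dichotomy, not just the Lyapunov representation.

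The serious gap is in part 1. Your Wald-identity strategy rests on the premise that, inside a bottom component, the increments $\ln L_{n+1}-\ln L_n$ form a Markov-additive functional over a \emph{finite} state space, so that a bounded corrector $h_n$ solving the Poisson equation exists and Lorden-type overshoot bounds with stationary second moments apply. For general (non-deterministic) HMMs this premise is false: the increment at time $n$ is determined by the pair of \emph{normalized posteriors} $\pi_i\Psi(w_n)/\|\pi_i\Psi(w_n)\|$, which evolve in an uncountable simplex; the finite objects supplied by the paper's constructions (the support-tracking chain $\B$ of \cref{lem:expoprop}, the right-bottom SCCs of \cref{thm:likelihood-to-lyapunov}) determine \emph{which} exponent occurs but not the increments themselves. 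Indeed the paper explicitly flags that the increments are neither independent nor identically distributed and can be unbounded, which is precisely why Wald's identity fails beyond the two-state case. Without geometric ergodicity of the filter process (which requires extra hypotheses and is not established here), the bounded corrector and the uniform overshoot bound are unavailable, so your optional-stopping identity does not go through. The paper avoids this entirely: it combines the almost-sure asymptotics (\cref{liexpmotivation}, which your sandwiching argument essentially reproves) with uniform integrability of $\{N_{\alpha,\beta}/(-\ln\alpha)\}$ (\cref{unifintegofN}, extracted from the exponential tail bound of \cref{2016profilethm}) and concludes by Vitali's convergence theorem. To repair your part 1 you would need to replace the Poisson-equation/overshoot machinery by such a uniform-integrability estimate.
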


The theorem above pertains to the expectation of $N_{\alpha, \beta}$.
In the next subsection we give additional information about the distribution of $N_{\alpha, \beta}$, further strengthening the connection between $N_{\alpha, \beta}$ and likelihood exponents.

\subsection{Distribution of $N_{\alpha, \beta}$}
\subsubsection{Likelihood Exponent $0$}
\begin{example}
We continue with \Cref{multilimitliexp} to illustrate the second case in \Cref{asymptoticwald}.
By picking $\alpha = \frac14, \beta = \frac14$ the thresholds for the SPRT are $A = \ln \frac13$ and $B = \ln 3$.
If the first letter is $b$, then $\ln L_n = \ln \frac32$ for all $n > 1$, thus never crosses the SPRT bounds and $\liexp = 0$.
Hence with probability $\frac12$ the SPRT fails to terminate  and $N_{\alpha, \beta} = \infty$.
It follows that $\PP_{\pi_2}(E_0) = \frac12$ and $\EE_{\pi_2}[N_{\alpha, \beta} \mid E_0] = \infty$ and, thus, $\EE_{\pi_2}[N_{\alpha, \beta}] = \infty$.
\end{example}
The second part of \Cref{asymptoticwald} says that the expectation of $N_{\alpha, \beta}$ conditioned under $E_0$ is infinite.
The following proposition strengthens this statement.
Conditioning under~$E_0$, the probability that $N_{\alpha, \beta}$ is infinite converges to~$1$ as $\alpha, \beta \rightarrow 0$.
Recall that $N_{\alpha, \beta}$ is monotone decreasing.
It follows that $\{ N_{\alpha', \beta'} = \infty \} \subseteq \{ N_{\alpha, \beta} = \infty \}$ if $\alpha \leq \alpha'$ and $\beta \leq \beta'$.
\begin{restatable}{proposition}{probexpzero}\label{probexp0}
The following two equalities hold up to $\PP_{\pi_2}$-null sets:
\begin{equation*}
E_0 ~=~ \left\{\lim_{n \rightarrow \infty} L_n > 0 \right\} ~=~ \bigcup_{\alpha, \beta > 0} \left\{N_{\alpha, \beta} = \infty\right\}.
\end{equation*}
Thus, $\lim_{\alpha, \beta \rightarrow 0} \PP_{\pi_2}(N_{\alpha, \beta} = \infty) = \PP_{\pi_2}(E_0)$.
\end{restatable}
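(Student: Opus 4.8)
The plan is to prove the two set equalities separately and then obtain the limit statement by continuity of measure, working throughout up to $\PP_{\pi_2}$-null sets and using that by \Cref{convergenceLn}.1 the limit $L := \lim_{n\to\infty} L_n$ exists in $[0,\infty)$ almost surely. First I would treat the second equality $\{L > 0\} = \bigcup_{\alpha,\beta>0}\{N_{\alpha,\beta}=\infty\}$. The point is that $N_{\alpha,\beta}=\infty$ means precisely $\ln L_n\in[A,B]$ for all $n$, and that $A=\ln\frac{\alpha}{1-\beta}\to-\infty$ and $B=\ln\frac{1-\alpha}{\beta}\to+\infty$ as $\alpha,\beta\to0$; hence $w$ belongs to the union iff the sequence $(\ln L_n(w))_n$ is bounded. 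If it is bounded below by some $A$, then $L_n\ge\e^{A}>0$ for all $n$, so $L\ge\e^{A}>0$. Conversely, on $\{L>0\}$ the value $0$ is absorbing for $(L_n)$ — if $\|\pi_1\Psi(w_m)\|=0$ then $\|\pi_1\Psi(w_n)\|=0$ for all $n\ge m$ and $L=0$ — so $L_n>0$ for every $n$; combined with $L_n\to L\in(0,\infty)$ this makes $(L_n)$ bounded above (being convergent) and bounded away from $0$, so $(\ln L_n)$ is bounded and $w$ lies in the union.

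For the first equality $E_0=\{L>0\}$, the inclusion $\{L>0\}\subseteq E_0$ is immediate: on $\{L>0\}$ we have $\ln L_n\to\ln L\in\RR$, hence $\frac1n\ln L_n\to0$ and $\liexp=0$. The reverse inclusion is the crux. By \Cref{liexplimits} and the remark that $\PP_{\pi_2}(\liexp>0)=0$, the limit $\liexp$ exists and is $\le0$ almost surely, so up to null sets $E_0$ is the complement of $\{\liexp<0\}$ while $\{L=0\}$ is the complement of $\{L>0\}$; thus $E_0\subseteq\{L>0\}$ is equivalent to $\{L=0\}\subseteq\{\liexp<0\}$. Concretely, I must show with probability $0$ that $L_n\to0$ (so $\ln L_n\to-\infty$) while $\frac1n\ln L_n\to0$, i.e.\ that the decay, when it occurs, is genuinely exponential. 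This is exactly where \cref{thm:likelihood-to-lyapunov} is needed: its representation of $\liexp$ through Lyapunov exponents shows that almost surely $\frac1n\ln L_n$ converges to one of finitely many Lyapunov-exponent values, and that this value is strictly negative whenever $L_n\to0$. Hence $\{L=0\}\subseteq\{\liexp<0\}$ up to a null set, which finishes the equality.

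The limit statement then follows from continuity from below: since $N_{\alpha,\beta}$ is monotone decreasing, the events $\{N_{\alpha,\beta}=\infty\}$ increase as $\alpha,\beta\downarrow0$ — along the diagonal $\alpha=\beta=1/k$ they form an increasing sequence whose union is $\bigcup_{\alpha,\beta>0}\{N_{\alpha,\beta}=\infty\}$ — so $\lim_{\alpha,\beta\to0}\PP_{\pi_2}(N_{\alpha,\beta}=\infty)=\PP_{\pi_2}(\bigcup_{\alpha,\beta>0}\{N_{\alpha,\beta}=\infty\})=\PP_{\pi_2}(E_0)$ by the two equalities. The main obstacle is the inclusion $E_0\subseteq\{L>0\}$: it asserts that convergence $L_n\to0$ is almost surely exponential, which cannot be deduced from the mere almost-sure existence of $\liexp$ and genuinely relies on the strict negativity of the Lyapunov rate supplied by \cref{thm:likelihood-to-lyapunov}.
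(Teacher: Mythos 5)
Your treatment of the second equality, of the easy inclusion $\{\lim_{n}L_n>0\}\subseteq E_0$, and of the final limit statement (continuity from below along the diagonal $\alpha=\beta=1/k$, using monotonicity of $N_{\alpha,\beta}$) is correct and consistent with the paper. The problem is the step you yourself identify as the crux, $E_0\subseteq\{\lim_n L_n>0\}$, equivalently $\{\lim_n L_n=0\}\subseteq\{\liexp<0\}$ up to null sets. You attribute this to \cref{thm:likelihood-to-lyapunov}, but that theorem only asserts that $\liexp$ exists a.s.\ and lies in a finite set $\{-\infty\}\cup\{\lambda(\S_1^1)-\lambda(\S_1^2),\ldots\}$; it says nothing about the sign of the attained value on the event $\{\lim_n L_n=0\}$. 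Nothing in it prevents $0$ from being one of the differences $\lambda(\S_i^1)-\lambda(\S_i^2)$ and from being the value attained on a positive-probability subset of $\{\lim_n L_n=0\}$ --- a priori $L_n$ could decay subexponentially, e.g.\ like $1/n$, giving $\lim_n L_n=0$ yet $\liexp=0$. So the clause ``this value is strictly negative whenever $L_n\to 0$'' is precisely the assertion to be proved, and the theorem you invoke does not deliver it.

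The paper closes this gap with a different, quantitative ingredient: the bound from \cite{kief16} (restated as \cref{kief16thm5}), which for \emph{distinguishable} distributions gives an explicit rate $c>0$ such that $L_{2|Q|n}<1$ except with probability $O(\e^{-c^2n/18})$. Combined with a change of measure, this yields $\PP(\liexp>\alpha)=0$ for some fixed $\alpha<0$, i.e.\ a distinguishable pair cannot have likelihood exponent $0$. The argument is then localised to the bottom SCCs of the chain $\B$ from \cref{lem:expoprop}: in each bottom SCC with associated exponent $0$, the relevant pair must therefore be \emph{not} distinguishable, and then \cref{convergenceLn}.2 together with the 0--1 law of \cref{lem:expoprop}.1 gives $\lim_n L_n>0$ almost surely there; conditioning on reaching the SCC transfers this back to $\pi_1,\pi_2$. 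To complete your proof you need some such quantitative input (exponential separation for distinguishable pairs); the purely qualitative representation of \cref{thm:likelihood-to-lyapunov} is not enough.
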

\begin{corollary}[using \Cref{convergenceLn}.2] \label{cor:probexp0}
Initial distributions $\pi_1$ and $\pi_2$ are distinguishable if and only if $\PP_{\pi_2}(E_0) = 0$ if and only if $\PP_{\pi_2}(N_{\alpha, \beta} < \infty) = 1$ holds for all $\alpha, \beta > 0$.
\end{corollary}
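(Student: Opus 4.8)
The plan is to prove the two equalities separately and then deduce the final ``Thus'' by a monotone-limit argument. Throughout I work under $\PP_{\pi_2}$ and use \Cref{convergenceLn}.1, which guarantees that $L := \lim_{n\to\infty} L_n$ exists almost surely and lies in $[0,\infty)$. I also record two elementary facts: $L_0 = 1$, and since the entries of every $\Psi(a)$ are non-negative, once the numerator $\|\pi_1\Psi(w_n)\|$ vanishes it stays zero, so on $\{L>0\}$ we in fact have $L_n>0$ for all~$n$. (If desired, the almost-sure convergence itself can be re-derived by observing that $L_n$ is a non-negative $\PP_{\pi_2}$-martingale, using that $\sum_{a}\Psi(a)$ is stochastic.)

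The easy equality is $\{L>0\} = \bigcup_{\alpha,\beta>0}\{N_{\alpha,\beta}=\infty\}$. By definition $N_{\alpha,\beta}=\infty$ means $\ln L_n \in [A,B]$ for all~$n$, where $A=\ln\frac{\alpha}{1-\beta}$ and $B=\ln\frac{1-\alpha}{\beta}$. As $\alpha,\beta\to 0$ we have $A\to-\infty$ and $B\to+\infty$, and for every $M>0$ one can pick $\alpha,\beta$ small enough that $[-M,M]\subseteq[A,B]$; hence the union over all $\alpha,\beta$ equals the event $\{\sup_n|\ln L_n|<\infty\}$ that $\ln L_n$ is bounded. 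It remains to identify this with $\{L>0\}$. On $\{L>0\}$ the positive sequence $L_n$ converges to $L>0$, so it is bounded and bounded away from~$0$, whence $\ln L_n$ is bounded; conversely on $\{L=0\}$ we have $L_n\to 0$, so $\ln L_n\to-\infty$ is unbounded below. This gives the equality on the almost-sure event that $L$ exists.

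For the first equality $E_0 = \{L>0\}$ up to null sets, the inclusion $\{L>0\}\subseteq E_0$ is immediate: if $L>0$ then $\ln L_n\to\ln L\in\RR$, so $\liexp=0$. The reverse inclusion up to a null set is the crux, and amounts to showing $\PP_{\pi_2}(E_0\cap\{L=0\})=0$, i.e.\ that almost surely $L=0$ forces $\liexp<0$. This is where the difficulty lies: a sequence tending to $-\infty$ can a priori do so sublinearly, so ruling this out on a set of positive measure cannot be done by soft arguments and needs quantitative control of $\ln L_n$. I would obtain it from the representation of the log-likelihood ratio as the logarithm of a norm of a product of random matrices driven by a finite Markov chain, i.e.\ the machinery behind \Cref{liexplimits} and \cref{thm:likelihood-to-lyapunov} developed in \cref{sec:rep}: on $\{L=0\}$ the trajectory is absorbed into a part of the state space on which the relevant Lyapunov exponent is strictly negative, so that $L_n\to 0$ exponentially fast almost surely and hence $\liexp<0$. (A direct Chernoff bound on $\EE_{\pi_2}[L_n^s]$ for $s\in(0,1)$ does not suffice on its own, since this moment does not decay exponentially once $\{L>0\}$ has positive probability; the estimate has to be localized to the decaying part of the state space, which is exactly what the decomposition in \cref{sec:rep} provides.) Combined with the almost-sure existence of $\liexp$ from \Cref{liexplimits}, this yields $\{L=0\}=\{\liexp<0\}$ and therefore $E_0=\{L>0\}$, both up to null sets.

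Finally, the displayed ``Thus'' follows by continuity of measure from below. Since $N_{\alpha,\beta}$ is monotone decreasing in $(\alpha,\beta)$, the events $\{N_{\alpha,\beta}=\infty\}$ increase as $\alpha,\beta\downarrow 0$ to their union, so $\PP_{\pi_2}(N_{\alpha,\beta}=\infty)\uparrow \PP_{\pi_2}\big(\bigcup_{\alpha,\beta}\{N_{\alpha,\beta}=\infty\}\big) = \PP_{\pi_2}(E_0)$, using the two equalities just established. The step I expect to be the main obstacle is the negative-drift claim $\PP_{\pi_2}(E_0\cap\{L=0\})=0$; everything else is bookkeeping with the definitions together with monotonicity and continuity of the measure.
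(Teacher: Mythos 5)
Your proposal is really a proof of \cref{probexp0}, not of the corollary as stated: you establish (or attempt to establish) the set identities $E_0 = \{\lim_n L_n > 0\} = \bigcup_{\alpha,\beta}\{N_{\alpha,\beta}=\infty\}$ and the monotone-limit consequence, but you never connect any of this to distinguishability. The corollary's first equivalence requires \cref{convergenceLn}.2 (distinguishability of $\pi_1,\pi_2$ is equivalent to $\lim_n L_n = 0$ holding $\PP_{\pi_2}$-a.s., i.e.\ to $\PP_{\pi_2}(\lim_n L_n > 0)=0$), which is exactly the ingredient the corollary is labelled as using and which your write-up never invokes. Granting \cref{probexp0}, that remaining step is routine bookkeeping (together with the observation that the union over all $\alpha,\beta>0$ may be replaced by the countable union over $\alpha=\beta=1/k$ by monotonicity of $N_{\alpha,\beta}$), but it should be said.

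The more serious issue is that the step you yourself flag as the crux --- $\PP_{\pi_2}(E_0\cap\{\lim_n L_n=0\})=0$ --- is not actually proved, and the mechanism you propose does not deliver it. The decomposition of \cref{sec:rep} gives $\liexp = \lambda(\S_R^1)-\lambda(\S_R^2)$ on the event of being trapped in a right-bottom SCC $R$; on $E_0$ these two Lyapunov exponents are \emph{equal}, and equality of the growth rates of numerator and denominator is perfectly consistent with the ratio tending to $0$ subexponentially (your own example $L_n = 1/n$). So asserting that on $\{\lim_n L_n = 0\}$ ``the relevant Lyapunov exponent is strictly negative'' is just a restatement of the claim to be proved, not a consequence of the machinery you cite. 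The paper closes this gap by a different route: using the support-tracking Markov chain $\B$ of \cref{lem:expoprop}, it localizes to a bottom SCC $C$ with $\ell(C)=0$ and invokes the quantitative separation bound for distinguishable distributions (Theorem~5 of \cite{kief16}, restated as \cref{kief16thm5}) to show that if the localized distributions \emph{were} distinguishable then $L_n$ would be below $\e^{\alpha n}$ for some $\alpha<0$ with probability tending to $1$ exponentially fast, forcing $\liexp<0$ and contradicting $\ell(C)=0$; hence they are non-distinguishable, and \cref{convergenceLn}.2 then gives $\lim_n L_n>0$ a.s.\ in that SCC. Without this (or some equivalent quantitative input ruling out sublinear decay of $\ln L_n$ on a set of positive measure), your argument has a genuine hole at its central step.
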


\subsubsection{Likelihood Exponent $-\infty$}

\begin{example}\label{mortalitywaldex}
Consider now a modification of \Cref{multilimitliexp} where state $s_3$ has the $b$ loop removed.
\begin{center}
	\begin{tikzpicture}[scale=2.3,LMC style]
\useasboundingbox (-2.5,-0.35) rectangle (2,0.35);
	\node[state] (s0) at (-1.25,0) {$s_1$};
	\node[state] (s1) at (-2,0) {$s_2$};
	\node[state] (s2) at (-0.5,0) {$s_3$};
	\node[state] (s3) at (1.5,0) {$s_4$};
	
	\path[->] (s0) edge node[pos=0.5,above] {$\frac14 a$} (s1);
	\path[->] (s0) edge node[pos=0.5,above] {$\frac34 b$} (s2);
	
	\path[->] (s1) edge [loop,out=220,in=260,looseness=10] node[pos=0.5,left,yshift=5,xshift=-2] {$\frac23 b$} (s1);
	\path[->] (s1) edge [loop,out=100,in=140,looseness=10] node[pos=0.5,left,yshift=-5,xshift=-2] {$\frac13 a$} (s1);
	
	\path[->] (s2) edge [loop,out=80,in=40,looseness=10] node[pos=0.5,right,yshift=-5,xshift=1] {$1 a$} (s2);
	
	\path[->] (s3) edge [loop,out=200,in=160,looseness=10] node[pos=0.5,left] {$\frac12 a$} (s3);
	\path[->] (s3) edge [loop,out=20,in=340,looseness=10] node[pos=0.5,right] {$\frac12 b$} (s3);
	\end{tikzpicture}
\end{center}
The likelihood exponents are $-\infty$ and $\ell := \frac12 \ln \frac89$ so that $\Lambda = \{-\infty, \ell\}$. Also, $\PP_{s_4}(E_{-\infty}) = \PP_{s_4}(E_\ell) =\frac12$. Up to $\PP_{s_4}$-null sets the events $E_{-\infty}$, $b\Sigma^\omega$ and $ba^*b\Sigma^\omega$ are equal. The event $ba^*b\Sigma^\omega$ represents the right chain producing an observation which the left chain cannot produce, causing the SPRT to terminate for any $\alpha, \beta$.
Therefore conditioned on $E_{-\infty}$, the random variable $N_{\alpha, \beta} - 1$ is bounded by a geometric random variable with parameter $\frac12$. Hence $\sup_{\alpha, \beta} \EE_{\pi_2}~ \Big[ N_{\alpha,\beta} \mid E_{-\infty} \Big] \leq 1 + 2$.
\end{example}
We define the stopping time $N_\perp = \min\{n \in \NN \mid L_n = 0\}$. Note that $\sup_{\alpha, \beta}N_{\alpha, \beta} \leq N_\perp$ since $\{L_n = 0\} \subseteq \{L_n \leq \frac{\alpha}{1 - \beta}\}$ for all $\alpha, \beta$.
By the following proposition, the reverse inequality also holds.
\begin{restatable}{proposition}{propneginf} \label{prop:neginf}
The events $E_{-\infty}$ and $\{L_n = 0 \text{ for some } n\}$ are equal.
Thus, $\sup_{\alpha, \beta} N_{\alpha, \beta} = N_\perp$ and $\lim_{\alpha, \beta \rightarrow 0}\PP_{\pi_2}(N_{\alpha, \beta} < \infty) = \PP_{\pi_2}(E_{-\infty})$.
\end{restatable}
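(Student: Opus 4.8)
The plan is to first establish the event identity $E_{-\infty}=\{L_n=0\text{ for some }n\}$; the two operational claims then follow with little extra work. One inclusion is immediate. If $L_n(w)=0$ then $\|\pi_1\Psi(w_n)\|=0$, so $\pi_1\Psi(w_n)=\vec{0}$ and hence $\pi_1\Psi(w_m)=\pi_1\Psi(w_n)\Psi(a_{n+1}\cdots a_m)=\vec{0}$, i.e.\ $L_m=0$, for every $m\ge n$. Thus $\frac1m\ln L_m=-\infty$ for all large $m$, the limit $\liexp$ exists and equals $-\infty$, and $w\in E_{-\infty}$.

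For the converse inclusion $E_{-\infty}\subseteq\{L_n=0\text{ for some }n\}$ I would argue by contraposition on the event $\{L_n>0\text{ for all }n\}$, and the heart of the matter is a uniform exponential lower bound on the surviving likelihood. Let $p_{\min}>0$ be the smallest positive entry occurring in any $\Psi(a)$ and let $c>0$ be the smallest positive entry of $\pi_1$. If $L_n(w)>0$ then $\|\pi_1\Psi(w_n)\|>0$, so there is at least one path $q_0 a_1 q_1\cdots a_n q_n$ with $a_1\cdots a_n=w_n$, $\pi_1(q_0)>0$, and all $\Psi(a_i)_{q_{i-1},q_i}>0$; the probability of this single path is at least $c\,p_{\min}^n$, so $\|\pi_1\Psi(w_n)\|\ge c\,p_{\min}^n$. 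Since $\|\pi_2\Psi(w_n)\|\le 1$, this gives $L_n\ge c\,p_{\min}^n$ and hence $\frac1n\ln L_n\ge\frac1n\ln c+\ln p_{\min}$. Therefore $\liminf_n\frac1n\ln L_n\ge\ln p_{\min}>-\infty$ on $\{L_n>0\text{ for all }n\}$, so no such $w$ can lie in $E_{-\infty}$. I expect this lower bound --- the observation that a likelihood which has not yet vanished is bounded below by the weight of a single admissible path --- to be the main obstacle; the rest is formal.

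Next I would deduce $\sup_{\alpha,\beta}N_{\alpha,\beta}=N_\perp$. The inequality $\le$ is already noted, so fix $w$ and any $n<N_\perp(w)$. Then $L_0(w),\dots,L_n(w)$ are strictly positive, and $\PP_{\pi_2}$-almost surely the denominators $\|\pi_2\Psi(w_m)\|$ are positive, so $\ln L_0(w),\dots,\ln L_n(w)$ are finite. As $\alpha,\beta\to0$ we have $A\to-\infty$ and $B\to+\infty$, so for $\alpha,\beta$ small enough these finitely many values all lie in $[A,B]$ and $N_{\alpha,\beta}(w)>n$. Hence $\sup_{\alpha,\beta}N_{\alpha,\beta}(w)\ge N_\perp(w)$, and since $N_{\alpha,\beta}$ is monotone this supremum is the limit $\lim_{\alpha,\beta\to0}N_{\alpha,\beta}=N_\perp$, pointwise.

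Finally, for the probability statement I would combine the event identity with $\sup_{\alpha,\beta}N_{\alpha,\beta}=N_\perp$: since $\{N_\perp<\infty\}=\{L_n=0\text{ for some }n\}=E_{-\infty}$, we immediately get $\PP_{\pi_2}(\sup_{\alpha,\beta}N_{\alpha,\beta}<\infty)=\PP_{\pi_2}(E_{-\infty})$. To recover the limit form, note that by the stated monotonicity the events $\{N_{\alpha,\beta}<\infty\}$ decrease as $\alpha,\beta\to0$, so continuity from above gives $\lim_{\alpha,\beta\to0}\PP_{\pi_2}(N_{\alpha,\beta}<\infty)=\PP_{\pi_2}\big(\bigcap_{\alpha,\beta}\{N_{\alpha,\beta}<\infty\}\big)$. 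The delicate step, which I would handle using the pointwise convergence $N_{\alpha,\beta}\uparrow N_\perp$ together with the almost-sure convergence of $L_n$ from \Cref{convergenceLn}.1, is to match this intersection with $\{N_\perp<\infty\}=E_{-\infty}$.
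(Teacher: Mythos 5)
Your treatment of the event identity $E_{-\infty}=\{L_n=0\text{ for some }n\}$ is correct and is essentially the paper's argument: the easy inclusion via absorption of the zero vector, and the converse by contraposition using the single-path lower bound $\|\pi_1\Psi(w_n)\|\ge c\,p_{\min}^{n}$, whence $\frac1n\ln L_n\ge\frac1n\ln c+\ln p_{\min}$ is bounded below (the paper folds $c$ into $p_{\min}$ and bounds by $2\ln p_{\min}$, which is cosmetic). Your pointwise derivation of $\sup_{\alpha,\beta}N_{\alpha,\beta}=N_\perp$ also matches the paper, which phrases the same observation as $\bigcap_{\alpha,\beta}\{L_n\notin(\tfrac{\alpha}{1-\beta},\tfrac{1-\alpha}{\beta})\}=\{L_n=0\}$.

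The problem is the last step, which you explicitly defer as ``the delicate step'': it is a genuine gap, and it cannot be closed the way you propose. Continuity from above does give $\lim_{\alpha,\beta\to0}\PP_{\pi_2}(N_{\alpha,\beta}<\infty)=\PP_{\pi_2}\bigl(\bigcap_{\alpha,\beta}\{N_{\alpha,\beta}<\infty\}\bigr)$, but that intersection is not $\{N_\perp<\infty\}$: a supremum of finite random variables can be infinite, so $\sup_{\alpha,\beta}N_{\alpha,\beta}=N_\perp$ does not transfer finiteness through the limit. Concretely, on the event $\{L_n>0\text{ for all }n\}\cap\{\lim_n L_n=0\}$ one has $\ln L_n\to-\infty$ while remaining finite, so every threshold $A$ is eventually crossed and $N_{\alpha,\beta}<\infty$ for all $\alpha,\beta$, yet $N_\perp=\infty$. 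Combining \cref{convergenceLn}.1 with \cref{probexp0}, the intersection equals $\{\lim_n L_n=0\}$ up to $\PP_{\pi_2}$-null sets, whose probability is $1-\PP_{\pi_2}(E_0)$, not $\PP_{\pi_2}(E_{-\infty})$. In the two-state example opening \cref{liexpsubsect} the SPRT terminates $\PP_{s_2}$-a.s.\ for every $\alpha,\beta$ (cf.\ Wald's formula), so the limit on the left is $1$ while $\PP_{s_2}(E_{-\infty})=0$. To be fair, the paper's own proof dispatches this point in one sentence (``the final claim follows because $\{N_\perp<\infty\}=E_{-\infty}$'') and has the same defect --- the displayed limit is consistent with \cref{probexp0} only when $\PP_{\pi_2}(E_0)+\PP_{\pi_2}(E_{-\infty})=1$, i.e., when no likelihood exponent in $(-\infty,0)$ occurs with positive probability. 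So: your first two parts are sound and follow the paper's route; the deferred step is where both you and the stated claim run aground.
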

Applying this to \Cref{mortalitywaldex}, we obtain $\sup_{\alpha, \beta} \EE_{\pi_2}\big[ N_{\alpha,\beta} \mid E_{-\infty}\big] = 3$.

\subsubsection{Likelihood Exponent in $(-\infty, 0)$}
Conditioned on $E_\ell$ where $\ell \in (-\infty, 0)$, \Cref{asymptoticwald} states that $N_{\alpha, \beta}$ scales with $\frac{\ln \alpha}{\ell}$ in expectation.  The following result shows that this relationship also holds $\PP_{\pi_2}$-almost surely.
\begin{restatable}{proposition}{liexpmotivation}\label{liexpmotivation}
Let $\ell \in \Lambda$ and assume $\ell \in (-\infty, 0)$. We have
\begin{equation*}
\PP_{\pi_2}\Big( N_{\alpha,\beta} \sim \frac{\ln \alpha}{\ell} \ \ (\text{as } \alpha, \beta \rightarrow 0) \;\Big\vert\; E_{\ell}\Big) ~=~ 1.
\end{equation*}
\end{restatable}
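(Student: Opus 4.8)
The plan is to reduce the two-parameter statement to a pathwise, one-dimensional first-passage estimate, using only that $\liexp = \ell$ on $E_\ell$ (which is the definition of $E_\ell$, together with the a.s.\ existence of the slope from \Cref{liexplimits}). First I would fix an arbitrary $w \in E_\ell$; then $\frac1n \ln L_n(w) \to \ell$, and since $\ell < 0$ this forces $\ln L_n(w) \to -\infty$, so that $M(w) := \sup_n \ln L_n(w) < \infty$ and $L_n(w) > 0$ for all $n$. As $\alpha,\beta \to 0$ the upper threshold $B = \ln\frac{1-\alpha}{\beta}$ tends to $+\infty$, so for all sufficiently small $\alpha,\beta$ we have $B > M(w)$ and the SPRT can only terminate through the lower threshold. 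Writing $A = \ln\frac{\alpha}{1-\beta}$ and $T_c := \min\{n \in \NN \mid \ln L_n < c\}$, this gives $N_{\alpha,\beta}(w) = T_A(w)$, and since $A \to -\infty$ as $\alpha,\beta \to 0$, the whole problem reduces to the asymptotics of $T_c$ as $c \to -\infty$.

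The core step is to prove that $T_c(w) \sim c/\ell$ as $c \to -\infty$. Fix $\epsilon \in (0,|\ell|)$; by convergence of the slope there is $N_0$ with $(\ell-\epsilon)n < \ln L_n(w) < (\ell+\epsilon)n$ for all $n \ge N_0$. Since $\ln L_n(w) \to -\infty$ we also have $T_c(w) \to \infty$, so for $c$ negative enough $T_c(w) \ge N_0$. The definition of $T_c$ gives $\ln L_{T_c}(w) < c$, and combined with the lower bound this yields $T_c(w) > c/(\ell-\epsilon)$; the upper bound shows $\ln L_n(w) < c$ whenever $n \ge c/(\ell+\epsilon)$, hence $T_c(w) \le \lceil c/(\ell+\epsilon)\rceil$. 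Dividing by $c/\ell > 0$ and letting $c \to -\infty$ gives $\frac{|\ell|}{|\ell|+\epsilon} \le \liminf T_c(w)/(c/\ell) \le \limsup T_c(w)/(c/\ell) \le \frac{|\ell|}{|\ell|-\epsilon}$, and letting $\epsilon \to 0$ establishes $T_c(w) \sim c/\ell$.

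It then remains to compose the asymptotics. Since $A = \ln\alpha - \ln(1-\beta)$ with $\ln(1-\beta) \to 0$ and $\ln\alpha \to -\infty$, we have $A/\ln\alpha \to 1$, i.e.\ $A \sim \ln\alpha$; combining $N_{\alpha,\beta}(w) = T_A(w) \sim A/\ell \sim \ln\alpha/\ell$ yields $N_{\alpha,\beta}(w) \sim \frac{\ln\alpha}{\ell}$ for every $w \in E_\ell$. As the asymptotic holds surely on $E_\ell$, the conditional probability is $1$.

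I expect the main obstacle to be purely the bookkeeping of the double limit rather than anything conceptual: one must check that a single $\delta$ makes both the reduction $N_{\alpha,\beta}=T_A$ and the estimate $T_A \sim A/\ell$ valid uniformly as $\alpha,\beta \to 0$ together (not merely along one axis), and that the event $\{N_{\alpha,\beta} \sim \ln\alpha/\ell\}$ is measurable so that conditioning on $E_\ell$ is legitimate --- the latter following from the monotonicity of $N_{\alpha,\beta}$ in $(\alpha,\beta)$, which lets one restrict the defining $\forall\epsilon\,\exists\delta\,\forall\alpha,\beta$ quantifiers to rationals and express the event as a countable combination of measurable sets.
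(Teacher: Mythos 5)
Your proposal is correct and follows essentially the same route as the paper's proof: both are pathwise arguments on $E_\ell$ that (i) rule out exit through the upper threshold for small $\alpha,\beta$ because $\sup_n \ln L_n<\infty$ there, and (ii) extract $N_{\alpha,\beta}\sim \ln\alpha/\ell$ from $\frac1n\ln L_n\to\ell$ --- the paper by bounding the overshoot $\ln\frac{\alpha}{1-\beta}-\ln L_{N_{\alpha,\beta}}$ at the stopping time, you by an $\epsilon$-sandwich on the first-passage time $T_c$. These are the same idea, so no further comparison is needed.
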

In fact, we prove the first part of \cref{asymptoticwald} using \Cref{liexpmotivation}.
If there were a bound $M \in \NN$ such that $\PP_{\pi_2}$-a.s.\ $\frac{N_{\alpha, \beta}}{- \ln \alpha} \leq M$, the first part of \Cref{asymptoticwald} would follow from \Cref{liexpmotivation} by the dominated convergence theorem.
However this is not the case in general.
Instead we show in \cref{app:liexpmotivation} that the set of random variables $\{\frac{N_{\alpha, \beta}}{-\ln \alpha}\mid 0 < \alpha, \beta \leq \frac12\}$ is uniformly integrable with respect to the measure $\PP_{\pi_2}$ and then use Vitali's convergence theorem.

\begin{example}
Recall \Cref{sleepcycles}, where $\Lambda = \{\ell\}$.
\begin{figure}[ht]
\begin{center}
\includegraphics[width=0.65\textwidth]{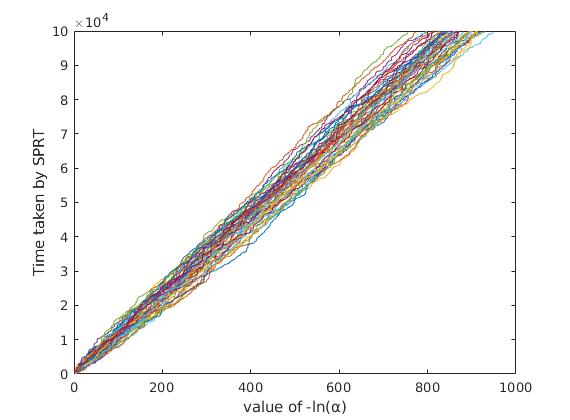}	
\end{center}
\caption{The time taken by the SPRT for $0 \leq -\ln \alpha = -\ln \beta \leq 1000$.}\label{sprttimevslnalpha}
\end{figure}
\Cref{sprttimevslnalpha} demonstrates the asymptotic relationship in \Cref{liexpmotivation}. Each of the $50$ lines correspond to a sample run and we record the value of $N_{\alpha, \beta}$ for $0 \leq - \ln \alpha = - \ln \beta \leq 1000$. From the figure we estimate $-\frac{1}{\ell}$ as $\frac{10^5}{800} = 125$. This coincides with the estimate given in \Cref{estimatingsleepcyclelikelihood}.
\end{example}

We conclude from this section that the performance of the SPRT, in terms of its termination time $N_{\alpha,\beta}$, is tightly connected to likelihood exponents.
This motivates our study of likelihood exponents in the rest of the paper.

\section{Probability of $E_\ell$} \label{sec:qual}

In this section we aim at computing $\PP_{\pi_2}(E_\ell)$ for a likelihood exponent~$\ell$.
We show the following theorem.
\begin{restatable}{theorem}{thmqualprob} \label{thm:qual-prob}
Given an HMM and initial distributions $\pi_1, \pi_2$,
\begin{enumerate}
\item one can compute $\PP_{\pi_2}(\liexp = -\infty)$ and $\PP_{\pi_2}(\liexp = 0)$ in PSPACE;
\item one can decide whether $\PP_{\pi_2}(\liexp  =0)=0$ (i.e., $0 \not\in \Lambda_{\pi_1,\pi_2}$) in polynomial time;
\item deciding whether $\PP_{\pi_2}(\liexp = 0)=1$, whether $\PP_{\pi_2}(\liexp = -\infty)=0$, and whether $\PP_{\pi_2}(\liexp  = -\infty)=1$ are all PSPACE-complete problems.
\end{enumerate}
\end{restatable}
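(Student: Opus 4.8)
The plan is to reduce every claim to reachability analysis in two succinctly-represented, exponential-size Markov chains obtained by pairing the $\pi_2$-process with a subset/belief automaton. First I would set up these products. For $E_{-\infty}$, recall from \Cref{prop:neginf} that $E_{-\infty} = \{L_n = 0 \text{ for some } n\}$, i.e.\ the event that the prefix emitted by $\pi_2$ eventually becomes impossible for $\pi_1$. The set $\supp(\pi_1 \Psi(w_n)) \subseteq Q$ evolves deterministically in the letters of $w$ via the subset automaton $S \mapsto \{r : \exists q \in S,\ \Psi(a)_{q,r} > 0\}$, with absorbing sink $\emptyset$. Pairing this automaton with the hidden state of the $\pi_2$-process yields a Markov chain $M_{-\infty}$ on $Q \times 2^Q$, and $\PP_{\pi_2}(E_{-\infty})$ is exactly the probability of reaching $\{(q, \emptyset)\}$ in $M_{-\infty}$. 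For $E_0$ I would use \Cref{probexp0}, which gives $E_0 = \{\lim_n L_n > 0\}$ up to null sets. The likelihood ratio stops drifting precisely when the two conditional distributions $\pi_1\Psi(w_n)/\|\pi_1\Psi(w_n)\|$ and $\pi_2\Psi(w_n)/\|\pi_2\Psi(w_n)\|$ become trace-equivalent; since equivalence is preserved by the dynamics and there are finitely many reachable ``equivalence types'', $E_0$ coincides (up to null sets) with the event of reaching a trace-equivalent belief pair in a product chain $M_0$ on pairs of reachable beliefs, the same object underlying the distinguishability test of \cite{kief14}. Thus both $E_{-\infty}$ and $E_0$ are reachability events in exponential but poly-space-describable Markov chains.

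For the upper bounds (part~1 and PSPACE-membership in part~3) I would run reachability algorithms on $M_{-\infty}$ and $M_0$. A state of either chain is a pair storable in polynomial space and its outgoing transitions are poly-space computable, so ``$\PP_{\pi_2}(\cdot) = 0$'' is non-reachability of the target, decidable in $\mathrm{NPSPACE} = \mathrm{PSPACE}$; and ``$\PP_{\pi_2}(\cdot) = 1$'' holds iff every state reachable from the initial distribution can itself reach the target, again a $\mathrm{PSPACE}$ condition as $\mathrm{PSPACE}$ is closed under the relevant quantifier alternation. For the exact values in part~1 I would invoke that reachability probabilities in such succinctly-represented Markov chains are computable, and comparable against a rational threshold, in $\mathrm{PSPACE}$. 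Part~2 needs no new work: by \Cref{cor:probexp0} we have $\PP_{\pi_2}(E_0) = 0$ iff $\pi_1, \pi_2$ are distinguishable, which is decidable in polynomial time \cite{kief14}.

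For the lower bounds in part~3 I would reduce from a PSPACE-complete automata problem, namely NFA universality, equivalently reachability of $\emptyset$ in a subset automaton. Encoding the NFA's transition relation as $\supp \Psi$ and letting $\pi_2$ be a single state that emits every letter with positive probability, so that every finite word has positive $\PP_{\pi_2}$-measure, the event ``$\pi_1$'s support dies'' becomes ``the word is not accepted by the all-states-accepting NFA''. This makes $\PP_{\pi_2}(E_{-\infty}) = 0$ equivalent to universality; a variant forcing the subset automaton to either die almost surely or trap in a universal sub-automaton yields hardness for $\PP_{\pi_2}(E_{-\infty}) = 1$; and for $\PP_{\pi_2}(E_0) = 1$ I would encode universality into a trace-equivalence condition on the belief pairs, so that the run almost surely reaches an equivalent pair exactly when the NFA is universal.

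I expect the main obstacle on two fronts. On the upper-bound side, the delicate point is the exact/threshold computation of the reachability probabilities in part~1 on an exponential state space without storing the exponentially long rationals that Cramer-style formulas produce; the chain $M_0$ is the harder case, since its target is defined by trace-equivalence of belief pairs rather than by a plain support condition, so identifying and analysing the relevant bottom SCCs in poly-space is more involved. On the lower-bound side, the hard part will be designing reductions that pin the probability to exactly $0$ or exactly $1$ simultaneously for all three statements, and in particular encoding universality as an equivalence property, rather than a mere reachability property, for the $E_0$ case.
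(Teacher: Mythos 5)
Your overall architecture matches the paper's: a product Markov chain on $2^Q\times Q$ pairing the subset (belief-support) automaton for $\pi_1$ with the hidden state of the $\pi_2$-process, reachability analysis on this exponential but succinctly represented chain via space-bounded transducers for the upper bounds, and a reduction from mortality / NFA (non-)universality for the lower bounds. The $E_{-\infty}$ analysis, part~2, and the $E_{-\infty}$ hardness reduction are all essentially the paper's arguments (the paper even gets the $=0$ and $=1$ hardness, and hardness of any nontrivial approximation, from the \emph{single} reduction of \cref{prop:nontrivial-approx}, since a positive mortality instance forces $\PP_{\pi_2}(E_{-\infty})=1$, not merely $>0$; your proposed ``variant'' for the $=1$ case is unnecessary).

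There is, however, one genuine error: your characterization of the $E_0$ target. You claim that $L_n$ stops drifting precisely when the two conditional distributions become \emph{trace-equivalent}, and you define the target set of $M_0$ accordingly. The correct condition, per \cref{lem:expoprop}.2 (and implicitly \cref{cor:probexp0}), is that the current $\pi_2$-state and the uniform distribution on the current support of $\pi_1$'s belief are \emph{not distinguishable}, i.e., have total variation distance $<1$ --- not distance $=0$. Trace equivalence is sufficient for $\lim_n L_n>0$ but far from necessary: a bottom SCC of the product chain in which the pair is non-distinguishable yet never equivalent still contributes all of its mass to $E_0$, so your algorithm would systematically undercount $\PP_{\pi_2}(E_0)$, and your hardness gadget for $\PP_{\pi_2}(E_0)=1$ (``encode universality as an equivalence condition'') is aimed at the wrong predicate. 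The fix is local --- classify each bottom SCC of the product chain by running the polynomial-time \emph{distinguishability} test of~\cite{kief14} on the pair $(e_q,\mathrm{unif}(S))$, which is exactly what the paper does --- but as written the $E_0$ branch of your argument does not go through. A related soft spot: you speak of a chain on ``pairs of reachable beliefs'' with ``finitely many equivalence types''; finiteness comes only from abstracting $\pi_1$'s belief to its \emph{support} and $\pi_2$'s side to its actual hidden state (which is what makes the transition probabilities reproduce $\PP_{\pi_2}$), and that abstraction is legitimate precisely because the likelihood exponent of a bottom SCC depends only on these data --- a fact that requires proof (the paper's \cref{lem:expoprop}.1, via L\'evy's 0--1 law) and that your sketch takes for granted.
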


The following example illustrates the construction underlying the PSPACE upper bound.

\begin{example} \label{ex:qual-start}
Consider another adaption of \Cref{multilimitliexp}.
\begin{center}
	\begin{tikzpicture}[scale=2.3,LMC style]
\useasboundingbox (-3,-0.4) rectangle (2,0.4);
	\node[state] (s0) at (-1.25,0) {$s_1$};
	\node[state] (s1) at (-2,0) {$s_2$};
	\node[state] (s2) at (-0.5,0) {$s_3$};
	\node[state] (s3) at (1.5,0) {$s_4$};
	\node[state] (s4) at (-2.75,0) {$s_5$};
	
	\path[->] (s0) edge node[pos=0.5,above] {$\frac14 a$} (s1);
	\path[->] (s0) edge node[pos=0.5,above] {$\frac34 b$} (s2);
	
	\path[->] (s2) edge [loop,out=-40,in=-80,looseness=10] node[pos=0.5,right,yshift=5,xshift=1] {$\frac12 b$} (s2);
	\path[->] (s2) edge [loop,out=80,in=40,looseness=10] node[pos=0.5,right,yshift=-5,xshift=1] {$\frac12 a$} (s2);
	
	\path[->] (s3) edge [loop,out=200,in=160,looseness=10] node[pos=0.5,left] {$\frac12 a$} (s3);
	\path[->] (s3) edge [loop,out=20,in=340,looseness=10] node[pos=0.5,right] {$\frac12 b$} (s3);
	
	\path[->] (s4) edge [loop,out=220,in=260,looseness=10] node[pos=0.5,left,yshift=5,xshift=-1] {$\frac13 b$} (s4);
	\path[->] (s4) edge [loop,out=100,in=140,looseness=10] node[pos=0.5,left,yshift=-5,xshift=-1] {$\frac13 a$} (s4);
	
	\path[->] (s1) edge [bend left] node[below] {$1 a$} (s4);
\path[->] (s4) edge [bend left] node[above] {$\frac13 a$} (s1);
	\end{tikzpicture}
\end{center}
If the first letter produced by~$s_4$ is~$b$, then $L_n = \frac32$ for all $n \in \NN$.
If the first two letters are~$ab$, then $L_1 = \frac12$ and $L_n = 0$ for $n \geq 2$.
If the first two letters are~$aa$, then $s_5 \in \supp(e_{s_1} \Psi(aaw))$ for all $w \in \Sigma^*$, and therefore, up to a $\PP_{s_4}$-null set, $L_n > 0$ holds for all $n \in \NN$, which implies (using \cref{prop:neginf}) that there is $\ell \in (-\infty, 0)$ such that $\liexp = \ell$.
Thus, $\Lambda_{s_1, s_4} = \{-\infty, \ell, 0\}$.

The likelihood ratio~$L_n$ is~$0$ if and only if $\supp(\pi_1 \Psi(w_n)) = \emptyset$.
In order to track the support of $\pi_1 \Psi(w_n)$, we consider the left part of the HMM as an NFA with $s_1$ as the initial state and its determinisation as shown in the DFA below.
\begin{center}
	\begin{tikzpicture}[scale=2.3,LMC style]
	\node[state] (s0) at (-1.25,0) {$\{s_1\}$};
	\node[state] (s1) at (-2,0) {$\{s_2\}$};
	\node[state] (s2) at (-0.5,0) {$\{s_3\}$};
	\node[state] (s4) at (-2.75,0) {$\{s_5\}$};
	\node[state] (s5) at (-2.75,-0.75) {$\{s_2,s_5\}$};
	\node[state] (s6) at (-2,-0.75) {$\emptyset$};
	
	\path[->] (s6) edge [loop,out=20,in=-20,looseness=10] node[pos=0.5,right] {$a, b$} (s6);
	
	\path[->] (s0) edge node[pos=0.5,above] {$a$} (s1);
	\path[->] (s0) edge node[pos=0.5,above] {$b$} (s2);
	
	\path[->] (s2) edge [loop,out=-70,in=-110,looseness=10] node[pos=0.5,below] {$b$} (s2);
	\path[->] (s2) edge [loop,out=20,in=-20,looseness=10] node[pos=0.5,right] {$a$} (s2);
	
	\path[->] (s4) edge [loop,out=160,in=200,looseness=10] node[pos=0.5,left] {$b$} (s4);
	
	\path[->] (s1) edge node[pos=0.5,above] {$a$} (s4);
	\path[->] (s1) edge node[pos=0.5,right] {$b$} (s6);
	
	\path[->] (s4) edge [bend left] node[right] {$a$} (s5);
	\path[->] (s5) edge [bend left] node[left] {$b$} (s4);	
	
	\path[->] (s5) edge [loop,out=160,in=200,looseness=5] node[pos=0.5,left] {$a$} (s5);
	\end{tikzpicture}
\end{center}
Almost surely, $s_4$ produces a word that drives this DFA into a bottom SCC, which then determines $\liexp$: concretely, the bottom SCC $\{\{s_5\},\{s_2,s_5\}\}$ is associated with~$\ell$, the bottom SCC $\{\emptyset\}$ with~$-\infty$, and the bottom SCC $\{\{s_3\}\}$ with~$0$.
\end{example}

In general, the observations need not be produced uniformly at random but by an HMM.
Therefore, in the following construction, we also keep track of the ``current'' state of the HMM which produces the observations.
For $S \subseteq Q$ and $a \in \Sigma$, define $\delta(S,a) := \{q' \in Q \mid \exists\,q \in S: \Psi(a)_{q,q'} > 0\}$.
Define the Markov chain $\B := (2^Q \times Q, T)$ where
\[
T_{(S,q),(S',q')} \ := \ \sum_{\delta(S,a) = S'} \Psi(a)_{q,q'} \,.
\]
Given initial distributions $\pi_1, \pi_2$ on~$Q$ as before, define an initial distribution $\iota$ on $2^Q \times Q$ by $\iota((\supp(\pi_1),q)) := (\pi_2)_q$.
Intuitively, the left part~$S$ of a state $(S,q)$ tracks the support of $\pi_1 \Psi(w_n)$, and the right part~$q$ tracks the current state of the HMM that had been initialised at a random state from~$\pi_2$.
The following lemma states the key properties of this construction.

\begin{restatable}{lemma}{lemexpoprop} \label{lem:expoprop}
Consider the Markov chain $\B = (2^Q \times Q, T)$ defined above.
\begin{enumerate}
\item
Every bottom SCC of~$\B$ is associated with a single likelihood exponent; i.e., for every bottom SCC $C \subseteq 2^Q \times Q$ there is $\ell(C) \in [-\infty,0]$ such that for any initial distribution $\pi_1 \in [0,1]^Q$ and any state $q_2 \in Q$ with $(\supp(\pi_1),q_2) \in C$ we have $\Lambda_{\pi_1,e_{q_2}} = \{\ell(C)\}$.
\item
Let $(S,q) \in C$ for a bottom SCC~$C$.
If $S = \emptyset$ then $\ell(C) = -\infty$; otherwise, if $e_q$ and the uniform distribution on~$S$ are not distinguishable then $\ell(C) = 0$; otherwise $\ell(C) \in (-\infty,0)$.
\item
We have $\PP_{\pi_2}(E_\ell) = \PP_{\iota}(\{\text{visit bottom SCC $C$ with $\ell(C) = \ell$}\})$.
\end{enumerate}
\end{restatable}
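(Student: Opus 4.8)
The plan is to realise the pair ``(support of the $\pi_1$-belief, current hidden state)'' as a run of the finite Markov chain~$\B$ and to read off $\liexp$ from the bottom SCC that the run enters. First I would establish the correspondence. Writing $S_n := \supp(\pi_1\Psi(w_n))$, an induction on~$n$ using $\supp(\mu\Psi(a))=\delta(\supp\mu,a)$ gives $S_{n+1}=\delta(S_n,a_{n+1})$. Coupling this with the hidden state~$q_n$ that generates the observations, a one-line computation of transition probabilities shows that under $\PP_{\pi_2}$ the process $((S_n,q_n))_{n\ge0}$ is exactly~$\B$ started from~$\iota$: from $(S,q)$ the next letter~$a$ and state~$q'$ occur with probability $\Psi(a)_{q,q'}$, so the chance of moving to $(S',q')$ is $\sum_{\delta(S,a)=S'}\Psi(a)_{q,q'}=T_{(S,q),(S',q')}$. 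Since $\B$ is finite, $\PP_\iota$-almost every run is absorbed into a unique bottom SCC; I also note that the first coordinate is empty for all states of a bottom SCC or for none (as $\delta(\emptyset,a)=\emptyset$ is a trap), and that the $Q$-projection of a bottom SCC lies in a single bottom SCC of the embedded Markov chain.

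The heart is the first assertion: once the run is in a bottom SCC~$C$ the value $\liexp$ is $\PP$-almost surely a single constant $\ell(C)$, independent of the exact~$\pi_1$ (only $\supp\pi_1$ matters) and of the entry state. By \cref{liexplimits} the limit exists almost surely, so it suffices to show it is measurable with respect to a trivial tail. Two insensitivity facts drive this. If $\mu,\mu'$ share a support then $\|\mu\Psi(v)\|$ and $\|\mu'\Psi(v)\|$ differ by a bounded factor, so $\tfrac1n\ln\|\mu\Psi(w_n)\|$ and $\tfrac1n\ln\|\mu'\Psi(w_n)\|$ have the same limit; hence the $\pi_1$-contribution depends on $\pi_1$ only through $\supp\pi_1$. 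For the $\pi_2$-contribution, \cref{convergenceLn}.1 gives that $\PP_{q'}(w_n\Sigma^\omega)/\PP_{q}(w_n\Sigma^\omega)$ stays bounded $\PP_{q}$-almost surely, so $\tfrac1n\ln\|\rho\Psi(w_n)\|$ shares its limit with $\tfrac1n\ln\PP_{q}(w_n\Sigma^\omega)$ whenever $q\in\supp\rho$; together with the Shannon--McMillan--Breiman theorem this limit is a constant $-h(C)$ depending only on~$C$. Now fix $(S,q_2)\in C$ with $S\neq\emptyset$ and let $\tau_1<\tau_2<\cdots$ be the (infinitely many, by recurrence) return times to $(S,q_2)$. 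Factoring $\Psi(w_n)=\Psi(w_{\tau_j})\Psi(a_{\tau_j+1}\cdots a_n)$ and dividing by~$n$ yields, with $v_{\tau_j}:=\pi_1\Psi(w_{\tau_j})/\|\pi_1\Psi(w_{\tau_j})\|$ (of support~$S$) and $\theta$ the shift on $\Sigma^\omega$,
\[
\liexp \;=\; \lim_{m\to\infty}\tfrac1m\ln\bigl\|v_{\tau_j}\Psi\bigl((\theta^{\tau_j}w)_m\bigr)\bigr\| \;+\; h(C).
\]
By the two insensitivity facts the right-hand side is a \emph{fixed} functional of the post-$\tau_j$ observations, the same for every~$j$; since the excursions between the $\tau_j$ are i.i.d.\ by the strong Markov property, $\liexp$ is measurable with respect to their tail, and Kolmogorov's $0$--$1$ law forces it to be $\PP_{q_2}$-almost surely constant. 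Strong connectivity of~$C$ then equates this constant across entry states, giving $\ell(C)$. I expect this step---turning the continuous belief dynamics into a tail event of a genuinely i.i.d.\ sequence---to be the main obstacle; the subtle point is that $\B$ tracks the hidden state rather than the $\pi_2$-belief support, and it is exactly the bounded-likelihood-ratio input that shows this loss of information does not affect $\liexp$.

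For the second assertion I would read $\ell(C)$ off from~$C$. If $S=\emptyset$ then $S_n=\emptyset$, hence $L_n=0$, eventually, so $\liexp=-\infty$. If $S\neq\emptyset$, take $\pi_1$ uniform on~$S$ and $\pi_2=e_q$, so that $\Lambda_{\pi_1,e_q}=\{\ell(C)\}$ by the first assertion. If $e_q$ and this uniform distribution are \emph{not} distinguishable then $\PP_{e_q}(\lim_n L_n>0)>0$ by \cref{convergenceLn}.2, so $\PP_{e_q}(E_0)>0$ by \cref{probexp0}, whence $\ell(C)=0$. If they \emph{are} distinguishable then $\lim_n L_n=0$ almost surely, so $\ell(C)\neq0$ (again by \cref{probexp0}), while $S_n\neq\emptyset$ throughout~$C$ forces $L_n>0$ for all~$n$, so $\ell(C)\neq-\infty$ by \cref{prop:neginf}; hence $\ell(C)\in(-\infty,0)$.

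Finally the third assertion is assembly. Since $\liexp$ depends only on the tail of the run, the strong Markov property at the hitting time of a bottom SCC~$C$ shows that, conditioned on absorption into~$C$, one has $\liexp=\ell(C)$ almost surely. Thus, up to a $\PP_{\pi_2}$-null set, $E_\ell$ is the disjoint union of the absorption events for those bottom SCCs~$C$ with $\ell(C)=\ell$, and summing their probabilities under the correspondence gives $\PP_{\pi_2}(E_\ell)=\PP_\iota(\text{absorption into some }C\text{ with }\ell(C)=\ell)$, as claimed.
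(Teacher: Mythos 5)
Your proof is correct, and Parts 2 and 3 follow essentially the paper's own route (the paper phrases Part 2 via \cref{cor:probexp0} and Part 3 via the path-lifting map $f$ into $\B$ plus the same pivot comparison you use). Part 1, however, is genuinely different. The paper fixes a reference pair $(\supp(\pi),q)\in C$ carrying some exponent $\ell$ with probability $x>0$, shows that the conditional probability of $\{\liexp=\ell\}$ given \emph{any} finite path is at least the uniform constant $x$, and concludes by L\'evy's 0--1 law; you instead decompose the run into i.i.d.\ excursions between returns to a fixed state of $C$, show $\liexp$ is a fixed functional of the post-$\tau_j$ observations for every $j$, and apply Kolmogorov's 0--1 law. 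Both arguments hinge on the same two insensitivity facts (dependence on $\pi_1$ only through its support, and the pivot $\|e_q\Psi(w_n)\|$ via \cref{convergenceLn}.1 to neutralise the fact that $\B$ tracks the hidden state rather than the $\pi_2$-belief support), and you correctly identify this as the crux. The trade-off: your route must split $\frac1n\ln L_n$ into two \emph{individually} convergent terms, which forces you to import the Shannon--McMillan--Breiman theorem for the denominator (together with the stationarity/ergodicity of the observation process on the bottom SCC $Q_C$ of the embedded chain and the transfer from the stationary to the Dirac initial law --- which is again your insensitivity fact (b), so this does close); the paper's L\'evy argument never needs the individual limits to exist and so stays entirely within \cref{convergenceLn} and \cref{liexplimits}. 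What your version buys is a transparent probabilistic mechanism (a tail event of a genuinely i.i.d.\ sequence) and it identifies the constant $h(C)$ as an entropy rate, which the paper only recovers later via Lyapunov exponents in \cref{sec:rep}. One small point to make explicit when writing this up: on a bottom SCC with nonempty first components one has $L_n>0$ for all $n$, so by \cref{prop:neginf} the case $\liexp=-\infty$ is excluded and the existence of $\liexp$ together with SMB really does yield the existence of the numerator limit $F$; without that remark the displayed identity for $\liexp$ could involve an $\infty-\infty$ ambiguity.
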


All parts of the lemma rely on the observation that $\liexp$ depend only on the support of~$\pi_1$ and on the support of~$\pi_2$.
The first part of the lemma follows from L\'evy's 0-1 law.
We use this lemma for the proof of \cref{thm:qual-prob}.1.
\begin{proof}[Proof sketch for \cref{thm:qual-prob}.1]
The Markov chain~$\B$ from \cref{lem:expoprop} is exponentially big but can be constructed by a PSPACE transducer, i.e., a Turing machine whose work tape (but not necessarily its output tape) is PSPACE-bounded.
This PSPACE transducer can also identify the bottom SCCs.
For each bottom SCC~$C$, the PSPACE transducer also decides whether $\ell(C) = -\infty$ or $\ell(C) \in (-\infty,0)$ or $\ell(C) = 0$, using \cref{lem:expoprop}.2 and the polynomial-time algorithm for distinguishability from~\cite{kief14}.
Finally, to compute $\PP_{\pi_2}(E_{-\infty})$ and $\PP_{\pi_2}(E_0)$, by \cref{lem:expoprop}.3, it suffices to set up and solve a linear system of equations for computing hitting probabilities in a Markov chain.
This system can also be computed by a PSPACE transducer.
Since linear systems of equations can be solved in the complexity class NC, which is included in polylogarithmic space, one can use standard techniques for composing space-bounded transducers to compute $\PP_{\pi_2}(E_{-\infty})$ and $\PP_{\pi_2}(E_0)$ in PSPACE.
\end{proof}

\begin{proof}[Proof of \cref{thm:qual-prob}.2]
Immediate from \cref{cor:probexp0} and the polynomial-time decidability of distinguishability~\cite{kief14}.
\end{proof}

Towards a proof of \cref{thm:qual-prob}.3, we use the \emph{mortality} problem, which asks, given a finite set of states~$Q$, a finite alphabet~$\Sigma$, and a function $\Phi : \Sigma \to \{0,1\}^{Q \times Q}$, whether there exists a word $w \in \Sigma^*$ such that $\Phi(w)$ is the zero matrix.
The mortality problem can be viewed as a special case of the NFA non-universality problem (given an NFA, does it reject some word?).
Like NFA universality, the mortality problem is PSPACE-complete~\cite{karasha09}.

Concerning $\PP_{\pi_2}(E_{-\infty})$ (cf.\ \cref{thm:qual-prob}.3), we actually show a stronger result, namely that any nontrivial approximation of $\PP_{\pi_2}(E_{-\infty})$ is PSPACE-hard.
The proof is also based on the mortality problem.
\begin{proposition} \label{prop:nontrivial-approx}
There is a polynomial-time computable function that maps any instance of the mortality problem to an HMM and initial distributions $\pi_1, \pi_2$ so that if the instance is positive then $\PP_{\pi_2}(E_{-\infty})=1$ and if the instance is negative then $\PP_{\pi_2}(E_{-\infty})=0$.
Thus, any nontrivial approximation of $\PP_{\pi_2}(E_{-\infty})$ is PSPACE-hard.
\end{proposition}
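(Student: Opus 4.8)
The plan is to reduce the mortality problem to the task of separating the cases $\PP_{\pi_2}(E_{-\infty})=1$ and $\PP_{\pi_2}(E_{-\infty})=0$. By \cref{prop:neginf}, up to $\PP_{\pi_2}$-null sets $E_{-\infty} = \{L_n = 0 \text{ for some } n\}$, and $L_n(w)=0$ holds exactly when $\pi_1 \Psi(w_n) = \vec{0}$, i.e.\ when the support $\supp(\pi_1\Psi(w_n))$ has died out. Since this support evolves by the subset rule $S \mapsto \delta(S,a) = \{q' \mid \exists q\in S:\ \Psi(a)_{q,q'}>0\}$, the idea is to design $\Psi$ on the support-relevant states so that this rule mimics the product of the matrices $\Phi(a)$, while the observations themselves are produced by an auxiliary state whose sole job is to emit a uniformly random word.

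Given a mortality instance $(Q,\Sigma,\Phi)$, I would build the HMM $\hat\H = (Q\cup\{s\},\,\Sigma\cup\{\#\},\,\Psi)$ with a fresh state $s$ and a fresh letter $\#\notin\Sigma$. The state $s$ loops on itself emitting each letter of $\Sigma$ uniformly, $\Psi(a)_{s,s}=1/|\Sigma|$ for $a\in\Sigma$, and never emits $\#$; I set $\pi_2 := e_s$, so that under $\PP_{\pi_2}$ the observed word is an i.i.d.\ uniform word over $\Sigma$ in which $\#$ never occurs. On $Q$, set $\Psi(a)_{q,q'} := c\,\Phi(a)_{q,q'}$ for $a\in\Sigma$ and $q,q'\in Q$, where $c>0$ is small enough (e.g.\ $c=1/(|\Sigma|\,|Q|)$) that each row sums to at most $1$, and absorb the surplus mass of each $q\in Q$ into a self-loop $\Psi(\#)_{q,q}$, making $\sum_a\Psi(a)$ stochastic. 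Finally let $\pi_1$ be the uniform distribution on $Q$, so $\supp(\pi_1)=Q$. The states $s$ and $Q$ are completely decoupled as transition systems and interact only through the shared alphabet. Because a word $w$ produced by $\pi_2$ contains no $\#$, the support dynamics of $\pi_1$ along $w$ use only letters of $\Sigma$, whence $\delta(Q,w_n)=\{q' \mid \exists q:\ \Phi(w_n)_{q,q'}>0\}$, which is empty precisely when $\Phi(w_n)$ is the zero matrix, i.e.\ when $w_n$ is mortal. The padding letter $\#$ is invisible to $\PP_{\pi_2}$ and only enforces stochasticity without corrupting these dynamics.

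It then remains to check the two cases. If the instance is positive, fix a mortal word $z$; since mortal words are closed under inserting a mortal factor ($\Phi(uzv)$ is again the zero matrix) and the observed word is i.i.d.\ uniform, a Borel--Cantelli argument shows that $z$ occurs as a factor $\PP_{\pi_2}$-almost surely, so some prefix is mortal, $L_n=0$ eventually, and $\PP_{\pi_2}(E_{-\infty})=1$. If the instance is negative, then $\Phi(w_n)$ is never the zero matrix, so $\delta(Q,w_n)\neq\emptyset$ and $L_n>0$ for all $n$, giving $\PP_{\pi_2}(E_{-\infty})=0$. The reduction is computable in polynomial time (the HMM has $|Q|+1$ states, $|\Sigma|+1$ letters, and rational entries of polynomial size). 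Since mortality is PSPACE-complete \cite{karasha09}, computing $\PP_{\pi_2}(E_{-\infty})$ is PSPACE-hard; moreover the output is always one of the extreme values $0$ or $1$, so any approximation whose guarantee lets it separate these extremes decides mortality and is therefore PSPACE-hard as well.

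The main obstacle, and really the only delicate point, is reconciling two conflicting requirements on $\Psi$: the restriction to $Q$ must reproduce exactly the support pattern of the $\Phi(a)$ so that mortality of $w_n$ coincides with death of $\supp(\pi_1\Psi(w_n))$, yet $\sum_a\Psi(a)$ must be stochastic even though the rows of the $\Phi(a)$ carry arbitrary mass. The dedicated padding letter $\#$, never emitted on the $\pi_2$-side, resolves this: it soaks up the surplus probability while leaving the support evolution along observed words untouched. The remaining ingredients (the uniform-word model for $\pi_2$ and the almost-sure occurrence of a mortal factor) are standard.
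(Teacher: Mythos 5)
Your proof is correct and follows essentially the same reduction as the paper: a fresh state for $\pi_2$ emitting uniform words over $\Sigma$, $\pi_1$ uniform on $Q$, and $\Psi$ reproducing the zero pattern of $\Phi$ on $Q$, so that $L_n=0$ exactly when the observed prefix is mortal, after which \cref{prop:neginf} and a Borel--Cantelli argument give $\PP_{\pi_2}(E_{-\infty})\in\{0,1\}$ according to the instance. The only difference is cosmetic: the paper achieves stochasticity by deleting zero rows and renormalising, whereas you add a padding letter $\#$ that $\pi_2$ never emits --- both work.
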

\begin{proof}
Let $(Q,\Sigma,\Phi)$ be an instance of the mortality problem.
If there is $q \in Q$ that indexes a zero row in $\sum_{a \in \Sigma} \Phi(a)$, remove the row and column indexed by~$q$ in all~$\Phi(a)$.
Thus, we can assume without loss of generality that $\sum_{a \in \Sigma} \Phi(a)$ has no zero row.
Construct an HMM $(Q,\Sigma,\Psi)$ so that $\Phi(a)$ and~$\Psi(a)$ have the same zero pattern for all $a \in \Sigma$.
Define $\pi_1$ as a uniform distribution on~$Q$.
Define $\pi_2$ as a Dirac distribution on a fresh state that emits letters from~$\Sigma$ uniformly at random.
Thus, if $(Q,\Sigma,\Phi)$ is a positive instance of the mortality problem then $\PP_{\pi_2}(E_{-\infty})=1$, and if $(Q,\Sigma,\Phi)$ is a negative instance then $\PP_{\pi_2}(E_{-\infty})=0$.
\end{proof}

The proof that deciding whether $\PP_{\pi_2}(E_0) = 1$ is PSPACE-hard is similarly based on mortality.

\section{Representing Likelihood Exponents} \label{sec:rep}

In the following we show that one can efficiently represent likelihood exponents in terms of \emph{Lyapunov exponents}. 
The definition of Lyapunov exponents is based on the following definition.

\begin{definition} \label{df:lyapunov-exponent}
A \emph{matrix system} is a triple $\M = (Q, \Sigma, \Psi)$ where $Q$ is a finite set of states, $\Sigma$ is a finite set of observations, and $\Psi: \Sigma \to \RR_{\ge 0}^{Q \times Q}$ specifies the transitions.
(Note that an HMM is a matrix system.)
A \emph{Lyapunov system} is a pair $\S = (\M, \rho)$ where $\M = (Q, \Sigma, \Psi)$ is a matrix system and $\rho \in (0,1]^\Sigma$ is a probability distribution with full support, such that the directed graph $(Q,E)$ with $E = \{(q,r) \mid \sum_{a \in \Sigma} \Psi_{q,r}(a) > 0\}$ is strongly connected.
\end{definition}

We can identify the probability distribution $\rho$ from this definition with the single-state HMM $(\{s\}, \Sigma, \Psi_\rho)$ where $\Psi_\rho(a)_{s,s} = \rho(a)$ for all $a \in \Sigma$.
In this way, $\rho$ produces a random infinite word from $\Sigma^\omega$. We will write $\PP_{\rho} $ for the associated probability measure.
The following lemma is Theorem 1 from~\cite{prot13}.

\begin{lemma}[\cite{prot13}] \label{lem:lyapunov-exponent}
Let $((Q, \Sigma, \Psi), \rho)$ be a Lyapunov system.
Then there is $\lambda \in \RR$ such that, for all $\pi \in [0,\infty)^Q$, $\PP_{\rho}$-a.s., either $\pi \Psi(w_n) = \vec{0}$ for some $n \in \NN$ or the limit $\lim_{n \to \infty} \frac1n \ln \| \pi \Psi(w_n) \|$ exists and equals~$\lambda$.
\end{lemma}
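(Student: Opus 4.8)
The plan is to obtain the existence and constancy of $\lambda$ at the level of the whole matrix product from Kingman's subadditive ergodic theorem, and then to transfer this to the individual starting vectors $e_q$ using the strong connectivity assumption.

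First I would set up the ergodic machinery. Since the letters of $w$ are drawn i.i.d.\ from $\rho$, the shift $\sigma$ on $(\Sigma^\omega,\PP_\rho)$ is measure preserving and ergodic. Writing $\nu(M):=\max_{p\in Q}\|e_p M\|$ for the maximal row sum of a nonnegative matrix $M$, one checks that $\nu$ is submultiplicative, so $f_n(w):=\ln\nu(\Psi(w_n))$ is subadditive along $\sigma$, i.e.\ $f_{n+m}(w)\le f_n(w)+f_m(\sigma^n w)$. As $\Sigma$ is finite, $f_1$ takes finitely many values and $\EE_\rho[f_1^+]<\infty$, so Kingman's subadditive ergodic theorem yields that $\frac1n f_n$ converges $\PP_\rho$-a.s.\ and in $L^1$ to a constant $\lambda:=\inf_n\frac1n\EE_\rho[f_n]$, constancy following from ergodicity of $\sigma$. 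For finiteness of $\lambda$, let $c_0$ be the least positive entry among the $\Psi(a)$; the support $S_n:=\supp(e_q\Psi(w_n))$ evolves deterministically by $S_n=\delta(S_{n-1},a_n)$ (with $\delta$ as in \cref{sec:qual}) and, once empty, stays empty --- this is exactly the first alternative $e_q\Psi(w_n)=\vec{0}$. On the complementary survival event a single surviving path already contributes weight at least $c_0^n$, so $\nu(\Psi(w_n))\ge c_0^n$ and hence $\lambda\ge\ln c_0>-\infty$, while $\lambda\le\ln\max_a\nu(\Psi(a))<\infty$ is clear (if survival has probability $0$ for every $q$, the second alternative is vacuous and any finite $\lambda$ will do). Thus $\lambda\in\RR$.

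Next I would prove the per-state statement by sandwiching $\frac1n\ln\|e_q\Psi(w_n)\|$ between $\lambda$ and $\lambda$ on the survival event. The upper bound is immediate from $\|e_q\Psi(w_n)\|\le\nu(\Psi(w_n))$. For the matching lower bound the idea is to use strong connectivity to show that no surviving row can fall exponentially behind the maximal row, i.e.\ that $\frac1n\ln\bigl(\nu(\Psi(w_n))/\|e_q\Psi(w_n)\|\bigr)\to 0$. Fix $K$ so that any state reaches any other within $K$ steps in the graph $(Q,E)$, and let $c>0$ bound below the positive entries of the finitely many $\Psi(u)$ with $|u|\le K$; for a connecting word $u$ from $q$ to $p$ we then have $e_q\Psi(u)\ge c\,e_p$ componentwise, hence $\|e_q\Psi(ux)\|\ge c\,\|e_p\Psi(x)\|$ for every $x$. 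Because $\rho$ has full support, prepending such a word to a random suffix is an event of probability at least $\rho_{\min}^K>0$ and shifts the log-norm by only $O(1)$; since the directional exponent is insensitive to any fixed-length initial segment, this comparison forces the exponents of all starting directions to coincide with the maximal one, namely $\lambda$. Combined with the upper bound this would give that the limit exists and equals $\lambda$ for every $q$, which is precisely the asserted state-independence.

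The hard part is making the lower bound of the previous paragraph an almost-sure rather than merely in-expectation statement. The connecting-word comparison cleanly equates the exponents $\lim_n\frac1n\EE_\rho[\,\cdot\,]$ of different starting directions, but upgrading this to $\liminf_n\frac1n\ln\|e_q\Psi(w_n)\|\ge\lambda$ pointwise on the survival event requires inserting connecting words at a positive asymptotic density of times --- so that a constant fraction of the currently surviving mass is repeatedly transferred onto a near-maximal row at only bounded multiplicative cost --- and controlling the random times and masses involved. This is where a (conditional) second Borel--Cantelli argument, together with the bound $c_0^n\le\|e_q\Psi(w_n)\|$ that rules out super-exponential decay on survival, does the work; everything else reduces to Kingman's theorem and the elementary monotonicity of the support process $S_n$.
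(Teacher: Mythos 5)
The paper does not actually prove this lemma: it is imported wholesale from~\cite{prot13}, so there is no internal proof to compare against. Judged on its own terms, your first two paragraphs are sound: the maximal row sum $\nu$ is submultiplicative, Kingman's subadditive ergodic theorem applied to $f_n=\ln\nu(\Psi(w_n))$ over the ergodic i.i.d.\ shift gives an a.s.\ constant limit $\lambda$, the finiteness discussion is correct (the event that the whole product ever vanishes is a $0$--$1$ event, since a mortal word, if one exists, a.s.\ occurs as a factor), and $\limsup_n\frac1n\ln\|e_q\Psi(w_n)\|\le\lambda$ is immediate. The problem is that the matching lower bound on the survival event of row~$q$ --- which is the entire content of the lemma, since it is what makes the exponent independent of~$q$ --- is left as an acknowledged sketch, and the sketch as stated has two concrete defects. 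First, the state you need to connect \emph{to} is the row achieving the maximum of the \emph{tail} product $\Psi(\sigma^m w_{n-m})$, which depends on the future of $w$ and on $n$; ``inserting connecting words at a positive density of times'' does not resolve this anticipation problem by itself. Second, the claim that ``a constant fraction of the currently surviving mass is repeatedly transferred onto a near-maximal row'' is false as stated: on survival the guaranteed mass of row $q$ at time $m$ is only $c_0^m$, an exponentially small fraction of $\nu(\Psi(w_m))$, not a constant one.

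Note also that the conditioning is genuinely nontrivial: the survival event of row $q$ need not be a $0$--$1$ event (e.g.\ two states where letter $a$ maps state $1$ to $\{1,2\}$ and kills state $2$, and letter $b$ does the reverse), so one cannot sidestep the issue by applying Kingman to the minimal row sum. A repair along your lines does exist and should be written out: for each of the finitely many candidate target states $p$, a conditional second Borel--Cantelli argument gives an a.s.\ finite time $m_p$ at which the next at most $K$ letters connect some state of $\supp(e_q\Psi(w_{m_p}))$ to $p$; then for $n$ beyond $m^\ast=\max_p(m_p+K)$ and $p$ chosen as the maximizing row of the appropriate tail product, submultiplicativity of $\nu$ yields $\|e_q\Psi(w_n)\|\ge c_0^{m_p}\,c\,\nu(\Psi(w_n))/\nu(\Psi(w_{m_p+K}))$, and since $m^\ast$ is a fixed (random but $n$-independent) quantity its contribution washes out of $\frac1n\ln(\cdot)$. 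Without this (or an equivalent regeneration/bounded-distortion argument), the proof is incomplete.
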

For a Lyapunov system~$\S$ we call $\lambda(\S) = \lambda$ from the lemma the \emph{Lyapunov exponent} defined by~$\S$.
We prove the following theorem, which implies \cref{liexplimits}.
\begin{restatable}{theorem}{likelihoodtolyapunov}\label{thm:likelihood-to-lyapunov}
Given an HMM $(Q, \Sigma, \Psi)$ we can compute in polynomial time $2 K \le 2 |Q|^2$ Lyapunov systems $\S_1^1, \S_1^2, \S_2^1, \S_2^2, \ldots, \S_K^1, \S_K^2$ such that for any initial distributions $\pi_1, \pi_2$ the limit $\liexp$ exists $\PP_{\pi_2}$-a.s.\ and lies in
\[
\Lambda \ \subseteq \ \{-\infty\} \cup \{\lambda(\S_1^1) - \lambda(\S_1^2), \ldots, \lambda(\S_K^1) - \lambda(\S_K^2)\}\,.
\]
In particular, the HMM $(Q, \Sigma, \Psi)$ has at most $|Q|^2 + 1$ likelihood exponents.
\end{restatable}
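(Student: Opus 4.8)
The plan is to split the average slope into a numerator and a denominator contribution and to realise each as a Lyapunov exponent. Writing $\frac1n \ln L_n = \frac1n \ln \|\pi_1 \Psi(w_n)\| - \frac1n \ln \|\pi_2 \Psi(w_n)\|$, it suffices to show that, under $\PP_{\pi_2}$, each of the two log-growth rates converges almost surely to one of finitely many values, each expressible through Lyapunov systems via \cref{lem:lyapunov-exponent}. The denominator never vanishes on a $\PP_{\pi_2}$-typical word, since the observed prefix has positive probability, so it always yields a finite exponent; the numerator may vanish, and precisely the event that $\|\pi_1 \Psi(w_n)\| = 0$ eventually, equivalently $L_n = 0$ (cf.\ \cref{prop:neginf}), produces the exponent $-\infty$. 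This reduces the theorem to analysing the growth of $\|e_p \Psi(w_n)\|$ for $p \in \supp(\pi_i)$ along $\PP_{\pi_2}$-random words.

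The central difficulty is that \cref{lem:lyapunov-exponent} assumes the observations are drawn i.i.d.\ from $\rho$, whereas under $\PP_{\pi_2}$ they are produced by the HMM and are only Markov. I would bridge this gap by a random-mapping representation of the generating dynamics: write the transition kernel sending $q$ to $(a,r)$ with probability $\Psi(a)_{q,r}$ as a convex combination of deterministic maps $f : Q \to \Sigma \times Q$, i.e.\ find maps $f$ and weights $\rho(f)$ with $\sum_{f : f(q) = (a,r)} \rho(f) = \Psi(a)_{q,r}$ for all $q,a,r$; by Carath\'eodory's theorem (using a common refinement of the cumulative distributions) only polynomially many maps are needed and they are computable in polynomial time. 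Drawing $f_1, f_2, \ldots$ i.i.d.\ from $\rho$ and iterating from $Y_0 \sim \pi_2$ reproduces both the hidden trajectory $(Y_n)$ and the observation law $\PP_{\pi_2}$. On the product state space $Q \times Q$, with first coordinate the generating state and second coordinate a tracked index, I define matrices by $(M_f)_{(q,p),(r,p')} = \Psi(a)_{p,p'}$ whenever $f(q) = (a,r)$ and $0$ otherwise. Then $e_{(Y_0,p_0)} M_{f_1} \cdots M_{f_n}$ is supported on $\{Y_n\} \times Q$ with second part $e_{p_0} \Psi(w_n)$, so that summing over $p_0 \in \supp(\pi_i)$ with weights $(\pi_i)_{p_0}$ gives a vector of norm $\|\pi_i \Psi(w_n)\|$. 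This is the step I expect to be the main obstacle: one must verify that the object is a genuine matrix system reproducing $\PP_{\pi_2}$ and, crucially, restrict to strongly connected pieces with a full-support $\rho$ so that \cref{lem:lyapunov-exponent} applies.

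With this device in place, the argument proceeds by SCC decomposition. Almost surely the process on $Q \times Q$ settles into a recurrent regime — this is where the L\'evy $0$--$1$ argument behind \cref{lem:expoprop} enters — and restricting the system to the relevant bottom SCC and equipping it with $\rho$ yields a Lyapunov system whose exponent equals the a.s.\ log-growth rate of the tracked norm, using nonnegativity of the matrices (the norm of a mixture grows at the rate of its dominant surviving basis vector, which is itself a single Lyapunov exponent). Initialising the tracked coordinate from $\supp(\pi_1)$ identifies the numerator exponent $\lambda(\S^1)$ and from $\supp(\pi_2)$ the denominator exponent $\lambda(\S^2)$, giving $\liexp = \lambda(\S^1) - \lambda(\S^2)$ on the corresponding part of the probability space and $-\infty$ on the event that the numerator dies.

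Finally, for the counting I would observe that all these Lyapunov systems are SCCs of the single product system on $Q \times Q$, of which there are at most $|Q|^2$. The generating coordinate is common to numerator and denominator, so the denominator exponent is already determined by the recurrent class of $(Y_n)$; pairing each relevant SCC $C$, which fixes $\lambda(\S^1)$ and, through its generating class, the matching $\lambda(\S^2)$, therefore produces at most $|Q|^2$ distinct differences $\lambda(\S_k^1) - \lambda(\S_k^2)$, so $K \le |Q|^2$. Together with the value $-\infty$ this yields $|\Lambda| \le |Q|^2 + 1$ and the claimed inclusion. The remaining work is bookkeeping: confirming polynomial-time computability of the $2K$ systems and checking that the shared-generating-coordinate constraint really caps the count at $|Q|^2$ rather than $|Q|^4$, which I regard as the subtlest point of the combinatorics.
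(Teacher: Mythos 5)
Your proposal follows essentially the same route as the paper: split $\frac1n\ln L_n$ into the log-growth rates of $\|\pi_1\Psi(w_n)\|$ and $\|\pi_2\Psi(w_n)\|$, form the cross-product system on $Q\times Q$ pairing a tracked index with the generating state, convert the Markov-generated observations into an i.i.d.\ source by a random-mapping (common-refinement) representation so that \cref{lem:lyapunov-exponent} applies, and count one numerator/denominator pair per SCC of the product graph whose generating projection is a bottom SCC, giving $K\le|Q|^2$. The paper packages the random-mapping step as \cref{from-gen-to-nongen} and settles the point you flag as the ``dominant surviving basis vector'' step by an explicit induction over the DAG of right-bottom SCCs using block-upper-triangular matrix products (\cref{lem:construct-L-systems}), but the ideas coincide.
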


In the rest of the section we provide more details on the construction underlying \cref{thm:likelihood-to-lyapunov}.
As an intermediate concept (between the given HMM and the Lyapunov systems from \cref{thm:likelihood-to-lyapunov}) we define \emph{generalized Lyapunov systems}.
\def\generalizedlyapsysdef{
First, for two matrix systems $\M_1 = (Q_1,\Sigma,\Psi_1)$ and $\M_2 = (Q_2,\Sigma,\Psi_2)$ with finite $Q_1, Q_2, \Sigma$ and transitions $\Psi_1,\Psi_2 : \Sigma \to \RR_{\ge 0}^{Q \times Q}$ we define the directed graph $G_{\M_1,\M_2} = (Q_1 \times Q_2, E)$ such that there is an edge from $(q_1,q_2)$ to $(r_1,r_2)$ if there is $a \in \Sigma$ with $\Psi_1(a)_{q_1,r_1} > 0$ and $\Psi_2(a)_{q_2,r_2} > 0$.

A \emph{generalized Lyapunov system} is a triple $\S = (\M,\H,C)$ where $\M = (Q_1, \Sigma, \Psi_1)$ is a matrix system and $\H = (Q_2, \Sigma, \Psi_2)$ is a strongly connected HMM and $C \subseteq Q_1 \times Q_2$ is a bottom SCC of $G_{\M, \H}$.
Given a generalized Lyapunov system, one can efficiently compute an ``equivalent'' Lyapunov system:
}
\begin{restatable}{lemma}{fromgentonongen}\label{from-gen-to-nongen}
Let $\S = ((Q_1, \Sigma, \Psi_1),(Q_2, \Sigma, \Psi_2),C)$ be a generalized Lyapunov system.
\begin{enumerate}
\item
There is $\lambda \in \RR$, henceforth called $\lambda(\S)$, such that, for all $\pi_1 \in [0,\infty)^{Q_1}$ and all probability distributions $\pi_2 \in [0,1]^{Q_2}$ with $\supp(\pi_1) \times \supp(\pi_2) \subseteq C$, we have $\PP_{\pi_2}$-a.s.\ that either $\pi_1 \Psi_1(w_n) = \vec{0}$ for some $n \in \NN$ or the limit $\lim_{n \to \infty} \frac1n \ln \| \pi_1 \Psi_1(w_n) \|$ exists and equals~$\lambda(\S)$.
\item
One can compute in polynomial time a Lyapunov system $\S'$ such that $\lambda(\S) = \lambda(\S')$.
\end{enumerate}
\end{restatable}

\def\definitionsforgenlyap{
Let $\H = (Q,\Sigma,\Psi)$ be an HMM.
Let $R \subseteq Q \times Q$ be a (not necessarily bottom) SCC of the graph $G_{\H,\H}$ such that $Q_R := \{q_2 \in Q \mid \exists\,q_1 \in Q : (q_1,q_2) \in R\}$ is a bottom SCC of the graph of $\sum_{a \in \Sigma} \Psi(a)$.
We call such~$R$ a \emph{right-bottom} SCC.
Clearly there are at most $|Q|^2$ right-bottom SCCs.
Towards \cref{thm:likelihood-to-lyapunov} we want to define, for each right-bottom SCC~$R$, two generalized Lyapunov systems $\S_R^1, \S_R^2$.
Intuitively, $\S_R^1$ and~$\S_R^2$ correspond to the numerator and the denominator of the likelihood ratio, respectively.

For a function of the form $\Phi : \Sigma \to \RR^{Q \times Q}$ and $P \subseteq Q$ we write $\Phi_{|P} : \Sigma \to \RR^{P \times P}$ for the function with $\Phi_{|P}(a)(q,r) = \Phi(a)(q,r)$ for all $a \in \Sigma$ and $q,r \in P$; i.e., $\Phi_{|P}(a)$ denotes the principal submatrix obtained from~$\Phi(a)$ by restricting it to the rows and columns indexed by~$P$.

Define $\Psi'(a,r)_{q,r} := \Psi(a)_{q,r}$ for all $a \in \Sigma$ and $q,r \in Q$.
Then $(Q, \Sigma \times Q, \Psi')$ is an HMM, which is similar to~$\H$, but which emits, in addition to an observation from~$\Sigma$, also the next state.
Since $Q_R$ is a bottom SCC of the graph of $\sum_{a \in \Sigma} \Psi(a)$, the HMM $\H_2 := (Q_R, \Sigma \times Q_R, \Psi'_{|Q_R})$ is strongly connected.
This HMM~$\H_2$ will be used both in $\S_R^1$ and in~$\S_R^2$.

Next, define $\overline{\Psi} : (\Sigma \times Q) \to [0,1]^{(Q \times Q) \times (Q \times Q)}$ by
\[
 \overline{\Psi}(a,r_2)_{(q_1,q_2),(r_1,r_2)} \ := \ \Psi(a)_{q_1,r_1} \quad \text{for all } a \in \Sigma\ \text{ and } q_1, q_2, r_1, r_2 \in Q\,.
\]
Now define $\S_R^1 := (\M^1, \H_2, C^1)$, where $\M^1 := (R, \Sigma \times Q_R, \overline{\Psi}_{|R})$ 
and $C^1 := \{((q_1,q_2),q_2) \mid (q_1,q_2) \in R\}$.
Finally, denoting by $R' \subseteq Q_R \times Q_R$ the SCC of the graph $G_{\H,\H}$ that contains the ``diagonal'' vertices $(q,q) \in Q_R \times Q_R$, define $\S_R^2 := (\M^2, \H_2, C^2)$, where $\M^2 := (R', \Sigma \times Q_R, \overline{\Psi}_{|R'})$
and $C^2 := \{((q_1,q_2),q_2) \mid (q_1,q_2) \in R'\}$.

For sets $U, V \subseteq Q \times Q$ let $U \longrightarrow_{G_{\H,\H}} V$ denote that there are $u \in U$ and $v \in V$ such that $v$ is reachable from~$u$ in~$G_{\H,\H}$.
}

\definitionsforgenlyap

We are ready to state the following key technical lemma:%
\begin{restatable}{lemma}{constructLsystems}\label{lem:construct-L-systems}
Given an HMM $(Q, \Sigma, \Psi)$, let $\mathcal{R} \subseteq 2^{Q \times Q}$ be the set of its right-bottom SCCs, and, for $R \in \mathcal{R}$, let $\S_{R}^1, \S_{R}^2$ be the generalized Lyapunov systems defined above.
Then, for any initial distributions $\pi_1, \pi_2$, the limit $\liexp$ exists $\PP_{\pi_2}$-a.s.\ and lies in
\[
\{-\infty\} \cup \{\lambda(\S_{R}^1) - \lambda(\S_{R}^2) \mid R \in \mathcal{R},\ \supp(\pi_1) \times \supp(\pi_2) \longrightarrow_{G_{\H,\H}} R\} \,.
\]
Thus,
$\Lambda_{\pi_1,\pi_2} \subseteq \{-\infty\} \cup \{\lambda(\S_{R}^1) - \lambda(\S_{R}^2) \mid R \in \mathcal{R},\ \supp(\pi_1) \times \supp(\pi_2) \longrightarrow_{G_{\H,\H}} R\}$.
\end{restatable}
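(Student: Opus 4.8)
The plan is to split the log-likelihood ratio as $\frac1n \ln L_n = \frac1n \ln \|\pi_1 \Psi(w_n)\| - \frac1n \ln \|\pi_2 \Psi(w_n)\|$ and to identify the two summands with growth rates governed by the generalized Lyapunov systems $\S_R^1$ and $\S_R^2$. First I would dispose of the degenerate case: if $\pi_1 \Psi(w_n) = \vec{0}$ for some~$n$ then $L_n = 0$, and by \cref{prop:neginf} this is exactly the event $E_{-\infty}$, which accounts for the $\{-\infty\}$ term. On its complement, $\|\pi_1 \Psi(w_n)\| > 0$ for all~$n$, while $\|\pi_2 \Psi(w_n)\| = \PP_{\pi_2}(w_n \Sigma^\omega) > 0$ always, so both logarithms are finite and the splitting makes sense.

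The second step is to understand the driving process. Under $\PP_{\pi_2}$ the sequence of states of the run is a trajectory of the embedded Markov chain, which $\PP_{\pi_2}$-a.s.\ is absorbed in a bottom SCC $Q_R^\ast$ after some finite random time~$\tau$ and thereafter visits every state of $Q_R^\ast$ infinitely often. Crucially, since $Q_R^\ast$ is bottom, the suffix of the run after~$\tau$, read as a sequence of (observation, next-state) pairs over $\Sigma \times Q_R^\ast$, is exactly an output of the strongly connected HMM~$\H_2$ started from the entry state. This lets me replace the original driving measure by that of~$\H_2$ and bring \cref{from-gen-to-nongen} into play; because the $\frac1n$-limit is insensitive to the finite prefix $w_\tau$, I may start the bookkeeping at time~$\tau$.

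Third, I would compute the two growth rates. For the numerator I decompose $\|\pi_1 \Psi(w_n)\|$ according to the SCC of $G_{\H,\H}$ in which the pair trajectory $(q_1^{(i)}, q_2^{(i)})$ — first coordinate a numerator path, second coordinate the realised run — eventually settles. Since the second coordinate is confined to $Q_R^\ast$ and hits all of it infinitely often, each such limiting SCC is a right-bottom SCC~$R$ with $Q_R = Q_R^\ast$, reachable from $\supp(\pi_1) \times \supp(\pi_2)$ in $G_{\H,\H}$ because the pair process started there; matching the entry configuration to~$C^1$ and applying \cref{from-gen-to-nongen}.1 to~$\S_R^1$ shows that the contribution of each such~$R$ grows like $e^{(\lambda(\S_R^1) + o(1)) n}$, whence $\frac1n \ln \|\pi_1 \Psi(w_n)\| \to \max_R \lambda(\S_R^1)$ over the finitely many reachable~$R$. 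The denominator $\|\pi_2 \Psi(w_n)\|$ is treated the same way, now pairing denominator paths with the realised run; here the relevant pair states lie in the diagonal SCC~$R'$, so \cref{from-gen-to-nongen}.1 applied to~$\S_R^2$ gives $\frac1n \ln \|\pi_2 \Psi(w_n)\| \to \lambda(\S_R^2)$. As $R'$, and hence $\lambda(\S_R^2)$, depends only on $Q_R^\ast$, the same value is subtracted for every reachable~$R$, so $\liexp = \max_R [\lambda(\S_R^1) - \lambda(\S_R^2)]$, which is a member of the claimed set; the inclusion for $\Lambda_{\pi_1,\pi_2}$ follows since $E_\ell$ has positive probability only for such~$\ell$.

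The main obstacle I anticipate is the rigorous passage from the full norm $\|\pi_1 \Psi(w_n)\|$ to the restricted generalized-Lyapunov dynamics: one must verify that grouping numerator paths by their eventual pair-SCC is consistent with the submatrices $\overline{\Psi}_{|R}$ and the diagonal embedding~$C^1$, that the entry vector at time~$\tau$ indeed meets the support condition $\supp(\pi_1') \times \supp(\pi_2') \subseteq C^1$ required by \cref{from-gen-to-nongen}.1 (possibly after advancing~$\tau$ by a bounded amount so that the pair has actually entered~$R$), and that taking $\frac1n \ln$ of a finite sum of nonnegative, exponentially growing contributions yields the maximum of their exponents with the limit — not merely the limsup — existing. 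Controlling the driving-measure change under conditioning on absorption in $Q_R^\ast$, and checking that no reachable right-bottom SCC is spuriously excluded or included, are the remaining points that require care.
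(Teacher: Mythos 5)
Your overall decomposition --- writing $\frac1n \ln L_n$ as the difference of the growth rates of $\|\pi_1\Psi(w_n)\|$ and $\|\pi_2\Psi(w_n)\|$, measuring each against the realised run, and identifying the resulting limits with $\lambda(\S_R^1)$ and $\lambda(\S_R^2)$ --- is the same as the paper's. The difficulty is that the step you flag as ``the main obstacle'' is not a verification to be deferred but the actual content of the proof, and your sketch of it does not go through as stated. You propose to group the numerator paths ``according to the SCC of $G_{\H,\H}$ in which the pair trajectory eventually settles'' and to assert that the contribution of each group grows like $e^{(\lambda(\S_R^1)+o(1))n}$. But $\|\pi_1\Psi(w_n)\|$ is a sum over exponentially many first-coordinate paths, and the mass sitting inside a right-bottom SCC $R$ at time $n$ entered $R$ at times ranging over all of $\{1,\dots,n\}$, after spending a possibly linear (in $n$) amount of time in transient, non-bottom SCCs of $G_{\H,\H}$. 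There is no single entry vector to which \cref{from-gen-to-nongen}.1 applies; the quantity one must control has the form $\sum_{k=1}^{n}\overline{\Psi}_{|R'}(w_{k-1})\,T\,\overline{\Psi}_{|R}(w_{k+1,n})$, and one must show that $\frac1n\ln$ of such a sum converges and that its limit is one of the finitely many candidate values --- rather than, a priori, a convex combination of two Lyapunov exponents realised at a switching fraction $k/n\to\theta\in(0,1)$, or a value that oscillates with $n$ because the maximising switching index does not stabilise.

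The paper resolves exactly this by ordering the right-bottom SCCs compatibly with the reachability DAG of $G_{\H,\H}$, inducting over a chain of sets $C_0\subseteq C_1\subseteq\cdots\subseteq C^*$, and analysing the block-upper-triangular decomposition of $\overline{\Psi}_{|C\cup R}(w_n)$, including a Bolzano--Weierstrass argument on the maximising indices $k_n/n$ to pin the limit down to the claimed finite set. Your proposal contains no substitute for this induction, so as written it establishes the result only in the easy case where the pair process starts inside a bottom SCC of $G_{\H,\H}$. A secondary imprecision: your closing formula $\liexp=\max_R[\lambda(\S_R^1)-\lambda(\S_R^2)]$ taken over all \emph{reachable} $R$ overstates what holds on a given run, since which right-bottom SCCs actually retain numerator mass is run-dependent; this does not affect the set-membership statement the lemma asks for, but it is not something your argument proves.
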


\begin{proof}[Proof sketch]
Let $\pi_1, \pi_2$ be initial distributions.
Very loosely speaking, we show in the appendix that on $\PP_{\pi_2}$-almost every run~$w$ there is a right-bottom SCC~$R$ which ``traps'' ``most'' of the mass of $\pi_1 \Psi(w_n)$ and $\pi_2 \Psi(w_n)$.
This can be made meaningful and formal using (the cross-product systems) $\S_R^1, \S_R^2$.
We then show that on $\PP_{\pi_2}$-almost every such run~$w$, for both $i=1,2$, the limit $\lim_{n \to \infty} \frac{1}{n} \ln \| \pi_i \Psi(w_n) \|$ exists and equals $\lambda(\S_R^i)$ (or $\pi_1 \Psi(w_n) = \vec{0}$ for some~$n$).
It follows that
\[
 \liexp \ = \ \lim_{n \to \infty} \frac{1}{n} \ln \frac{\| \pi_1 \Psi(w_n) \|}{\| \pi_2 \Psi(w_n) \|} \ = \ \lambda(\S_R^1) - \lambda(\S_R^2) \,. \qedhere
\]
\end{proof}
With \cref{lem:construct-L-systems} at hand, the proof of \cref{thm:likelihood-to-lyapunov} is easy:
\begin{proof}[Proof of \cref{thm:likelihood-to-lyapunov}]
As argued before, the set $\mathcal{R}$ of right-bottom SCCs of the given HMM has at most $|Q|^2$ elements.
These right-bottom SCCs $R$ and the associated generalized Lyapunov systems $\S_R^1, \S_R^2$ can be computed in polynomial time.
By \cref{lem:construct-L-systems} we have $\Lambda = \bigcup_{\pi_1,\pi_2} \Lambda_{\pi_1,\pi_2} \subseteq \{-\infty\} \cup \{\lambda(\S_{R}^1) - \lambda(\S_{R}^2) \mid R \in \mathcal{R}\}$.
By \cref{from-gen-to-nongen}.2, for each $R \in \mathcal{R}$ one can compute in polynomial time an equivalent Lyapunov system.
\end{proof}

\Cref{thm:likelihood-to-lyapunov} allows us to represent the likelihood exponents of an HMM in terms of Lyapunov exponents.
In general, approximating or even computing Lyapunov exponents is hard, but there are practical approximation algorithms using convex optimisation~\cite{ProtasovJungers13,Sutter21}.

\section{Deterministic HMMs} \label{sec:det}

In \cref{sec:qual,sec:rep} we have seen that the problems of representing/computing likelihood exponents and of computing their probabilities tend to be computationally difficult.
In this section we study \emph{deterministic} HMMs and show that this subclass leads to tractable problems.
An HMM $(Q, \Sigma, \Psi)$ is \emph{deterministic} if, for all $a \in \Sigma$, all rows of $\Psi(a)$ contain at most one non-zero entry.
Thus, for all $q \in Q$ and $w \in \Sigma^*$, we have $|\supp(e_q \Psi(w))| \le 1$.

A useful observation is that the Markov chain $\B = (2^Q \times Q, T)$, which was defined before \cref{lem:expoprop} and can be exponential in general, has only quadratic size in the deterministic case if we restrict it to the part that is reachable from initial Dirac distributions.

\begin{example}\label{ex:det1}
Consider the deterministic HMM $(Q, \Sigma, \Psi)$ in \cref{fig:det1}(a).
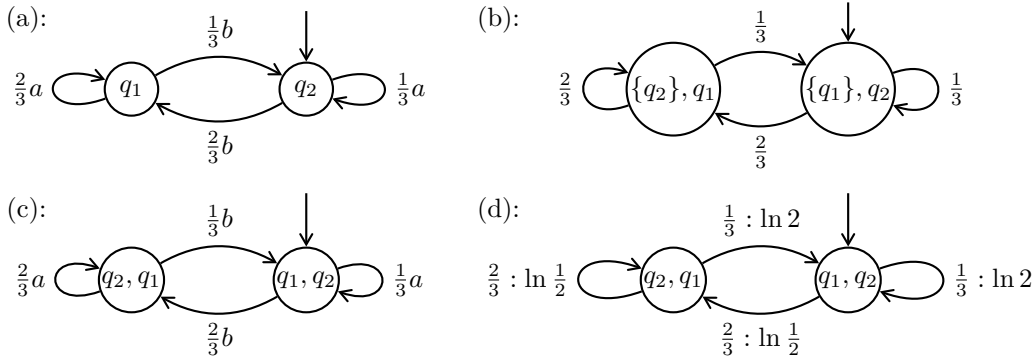
\begin{figure}[ht]
\begin{center}
	\begin{tikzpicture}[scale=2.3,LMC style]
    \node at (-1.1,0.4) {(a):};

	\node[state] (s01) at (-0.5,0) {$q_1$};
	\node[state] (s11) at (0.5,0) {$q_2$};
    \draw[->,thick] (0.5,0.45) -- (s11);	
	\path[->] (s01) edge [loop,out=200,in=160,looseness=10] node[pos=0.5,left] {$\frac23 a$} (s01);
	\path[->] (s01) edge [out=20,in=170,bend left] node[pos=0.5,above] {$\frac13 b$} (s11);
	\path[->] (s11) edge [out=200,in=340,bend left] node[pos=0.5,below] {$\frac23 b$} (s01);
	\path[->] (s11) edge [loop,out=20,in=340,looseness=10] node[pos=0.5,right] {$\frac13 a$} (s11);

    \node at (1.6,0.4) {(b):};

	\node[state] (s02) at (2.6,0) {$\{q_2\}, q_1$};
	\node[state] (s12) at (3.6,0) {$\{q_1\}, q_2$};	
    \draw[->,thick] (3.6,0.5) -- (s12);	
	\path[->] (s02) edge [loop,out=200,in=160,looseness=5] node[pos=0.5,left] {$\frac23$} (s02);
	\path[->] (s02) edge [out=20,in=170,bend left] node[pos=0.5,above] {$\frac13$} (s12);
	\path[->] (s12) edge [out=200,in=340,bend left] node[pos=0.5,below] {$\frac23$} (s02);
	\path[->] (s12) edge [loop,out=20,in=340,looseness=5] node[pos=0.5,right] {$\frac13$} (s12);

    \node at (-1.1,-0.7) {(c):};

	\node[state] (s03) at (-0.5,-1.1) {$q_2, q_1$};
	\node[state] (s13) at (0.5,-1.1) {$q_1, q_2$};
    \draw[->,thick] (0.5,-0.6) -- (s13);	
	\path[->] (s03) edge [loop,out=200,in=160,looseness=7] node[pos=0.5,left] {$\frac23 a$} (s03);
	\path[->] (s03) edge [out=20,in=170,bend left] node[pos=0.5,above] {$\frac13 b$} (s13);
	\path[->] (s13) edge [out=200,in=340,bend left] node[pos=0.5,below] {$\frac23 b$} (s03);
	\path[->] (s13) edge [loop,out=20,in=340,looseness=7] node[pos=0.5,right] {$\frac13 a$} (s13);

    \node at (1.6,-0.7) {(d):};

	\node[state] (s04) at (2.6,-1.1) {$q_2, q_1$};
	\node[state] (s14) at (3.6,-1.1) {$q_1, q_2$};
    \draw[->,thick] (3.6,-0.6) -- (s14);	
	\path[->] (s04) edge [loop,out=200,in=160,looseness=10] node[pos=0.5,left] {$\frac23 : \ln \frac12$} (s04);
	\path[->] (s04) edge [out=20,in=170,bend left] node[pos=0.5,above] {$\frac13 : \ln 2$} (s14);
	\path[->] (s14) edge [out=200,in=340,bend left] node[pos=0.5,below] {$\frac23 : \ln \frac12$} (s04);
	\path[->] (s14) edge [loop,out=20,in=340,looseness=10] node[pos=0.5,right] {$\frac13 : \ln 2$} (s14);
\end{tikzpicture}
\end{center}
\caption{Cross-product constructions for a deterministic HMM.}
\label{fig:det1}
\end{figure}
Let $\pi_1 = e_{q_1}$ and $\pi_2 = e_{q_2}$ (the latter is indicated by an arrow pointing to~$q_2$).
Then the relevant (i.e., reachable from $(\{q_1\}, q_2)$) part of~$\B$ is shown in \cref{fig:det1}(b).
Let us add back the observations that gave rise to the transitions in~$\B$, and for simplicity drop the set brackets in the left component of states.
We obtain the HMM in \cref{fig:det1}(c).
With this HMM we may keep track of the exact likelihood ratio.
For example, suppose that the word $a b a$ is emitted, so that $L_3 = \frac{\|e_{q_1} \Psi(a b a)\|}{\|e_{q_2} \Psi(a b a)\|} = \frac12$ and $\supp(e_{q_1} \Psi(a b a)) = \{q_2\}$ and $\supp(e_{q_2} \Psi(a b a)) = \{q_1\}$.
Suppose the next letter is $b$ (which is the case with probability~$\frac13$).
Then $L_4$ arises from $L_3$ by multiplying with $\frac{\Psi_{q_2,q_1}(b)}{\Psi_{q_1,q_2}(b)} = 2$, and the supports are switched again.
In terms of log-likelihoods, we have $\ln L_4 = \ln L_3 + \ln 2$.
This motivates the Markov chain shown in \cref{fig:det1}(d), where the transitions outgoing from a state $(r_1,r_2)$ are labelled by the log-likelihood ratio of their corresponding probabilities in the HMM.
%
The Markov chain has stationary distribution $(\frac23, \frac13)$.
By the strong ergodic theorem for Markov chains, we obtain (the irrational number)
\[
\textstyle
\liexp \ =\ \frac23 \Big(\frac23 \ln \frac12 + \frac13 \ln 2 \Big) + \frac13 \Big(\frac13 \ln 2 + \frac23 \ln \frac12\Big) \ = \ \frac13 \ln2 + \frac23 \ln \frac12 = - \frac13 \ln2\,.
\]
\end{example}
In general there may again be several likelihood exponents, including $-\infty$ and~$0$.
For the rest of the section, let $\H = (Q,\Sigma,\Psi)$ be a deterministic HMM. 
Motivated by \cref{ex:det1}, define an HMM $\A = ((Q \times Q) \cup s_\bot, \hat\Sigma, \hat\Psi)$, where $s_\bot$ is a fresh state, and
\begin{align*}
&\hat\Sigma \ := \ \left\{\ln \frac{\Psi(a)_{q_1,r_1}}{\Psi(a)_{q_2,r_2}} \in [-\infty,\infty) \;\middle\vert\; a \in \Sigma,\ q_1,r_1,q_2,r_2 \in Q,\ \Psi(a)_{q_2,r_2} \ne 0 \right\} \cup \{-\infty\} \\
&\hat\Psi(\hat a)_{(q_1,q_2),(r_1,r_2)} \ := \ \sum \left\{\Psi(a)_{q_2,r_2} \;\middle\vert\; a \in \Sigma : \hat a = \ln \frac{\Psi(a)_{q_1,r_1}}{\Psi(a)_{q_2,r_2}} \right\} \quad \text{for } \hat a \ne -\infty \\
&\hat\Psi(-\infty)_{(q_1,q_2),s_\bot} \ := \ \sum \left\{\Psi(a)_{q_2,r_2} \;\middle\vert\; a \in \Sigma,\ r_2 \in Q : \textstyle\sum_{r_1 \in Q} \Psi(a)_{q_1,r_1} = 0 \right\}  \\
&\hat\Psi(-\infty)_{s_\bot,s_\bot} \ := \ 1\,.
\end{align*}

Note that the embedded Markov chain of~$\A$ is similar to the Markov chain~$\B$ from \cref{lem:expoprop}:
states $(\{q_1\}, q_2)$ in~$\B$ are called $(q_1,q_2)$ in~$\A$, the states $(\emptyset,q)$ in~$\B$ are subsumed by the state $s_\bot$ of~$\A$, and the states $(S,q)$ in~$\B$ with $|S| > 1$ are not represented in~$\A$.
The observations in~$\hat\Sigma \subseteq [-\infty, \infty)$ track the log-likelihood ratio.

\begin{example} \label{ex:det2}
Consider the HMM~$\H$ on the left, with initial distributions $\pi_1 = e_{q_1}$ and $\pi_2 = e_{q_2}$.
The part of~$\A$ reachable from $(q_1,q_2)$ is shown on the right:
\begin{center}
\begin{tikzpicture}[scale=2.3,LMC style]
\useasboundingbox (-1.25,-0.35) rectangle (4.5,0.35);
	\node[state] (q1) at (-1,0) {$q_1$};
	\node[state] (q2) at (-0.3,0) {$q_2$};
	\path[->] (q1) edge node[above] {$1 a$} (q2);
	\path[->] (q2) edge [loop,out=40,in=80,looseness=10] node[pos=0.5,right,yshift=-5] {$\frac12 a$} (q2);
	\path[->] (q2) edge [loop,out=-40,in=-80,looseness=10] node[pos=0.5,right,yshift=6] {$\frac12 b$} (q2);
	\node[state] (sb) at (1.3,0) {$s_\bot$};
	\node[state] (q1q2) at (2.5,0) {$q_1,q_2$};
	\node[state] (q2q2) at (3.7,0) {$q_2,q_2$};
    \draw[->,thick] (2.5,0.4) -- (q1q2);
	\path[->] (sb) edge [loop,out=160,in=200,looseness=10] node[pos=0.5,left] {$1 : -\infty$} (sb);
    \path[->] (q1q2) edge node[above] {$\frac12 : -\infty$} (sb);
    \path[->] (q1q2) edge node[above] {$\frac12 : \ln 2$} (q2q2);
	\path[->] (q2q2) edge [loop,out=20,in=-20,looseness=7] node[pos=0.5,right] {$1 : 0$} (q2q2);
\end{tikzpicture}
\end{center}
Here we have $\Lambda_{\pi_1,\pi_2} = \{-\infty,0\}$ with $\PP_{\pi_2}(E_{-\infty}) = \PP_{\pi_2}(E_0) = \frac12$.
\end{example}

Denote by $\oA$ the embedded Markov chain of~$\A$.
Let $C \subseteq Q \times Q$ be a non-$\{s_\bot\}$ bottom SCC of~$\oA$.
Let $\mu \in [0,1]^C$ denote the stationary distribution of the restriction of~$\oA$ on~$C$.
Define the vector $\nu \in \RR^C$ of average observations by
$
 \nu_{(r_1,r_2)} \ := \ \sum_{\hat a \in \hat\Sigma} \| e_{(r_1,r_2)} \hat\Psi(\hat a) \| \cdot \hat a
$.
By the strong ergodic theorem for Markov chains, the \emph{average observation} in~$C$ equals $\mu \nu^\top =: \ell(C)$.
Extend this definition by $\ell(\{s_\bot\}) := -\infty$.
Then we have the following lemma.

\begin{restatable}{lemma}{lempolyprop} \label{lem:polyprop}
Let $\pi_1 = e_{q_1}$ and $\pi_2 = e_{q_2}$ be initial distributions.
For the Markov chain~$\oA$ define $\iota := e_{(q_1,q_2)}$.
We have
$\PP_{\pi_2}(E_\ell) = \PP_{\iota}(\{\text{visit bottom SCC $C$ with $\ell(C) = \ell$}\})$.
\end{restatable}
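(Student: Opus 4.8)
The plan is to exhibit the observation process under $\PP_{\pi_2}$ as (a relabelling of) the Markov chain $\oA$ started from $\iota$, so that $\liexp$ becomes a Cesàro average of the emitted log-likelihood increments, and then to read off the limit from the bottom SCC that is reached.

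First I would use determinism to identify states. Since $\H$ is deterministic, for every $w$ and every $n$ the supports $\supp(e_{q_1}\Psi(w_n))$ and $\supp(e_{q_2}\Psi(w_n))$ each contain at most one state; moreover, $\PP_{\pi_2}$-a.s.\ the second support is a singleton $\{q_2^{(n)}\}$ for all $n$ (it is $q_2$ that generates $w$), while the first is either a singleton $\{q_1^{(n)}\}$ or empty. Setting $X_n := (q_1^{(n)},q_2^{(n)})$ when both supports are nonempty and $X_n := s_\bot$ otherwise, one checks directly from the definition of $\hat\Psi$ that, conditioned on $X_n=(q_1,q_2)$, the next letter is drawn according to the emission distribution of $q_2$ in $\H$ and both components are then updated deterministically; hence $(X_n)_{n\ge 0}$ is, under $\PP_{\pi_2}$, a Markov chain with transition matrix $\oA$ and initial state $\iota=e_{(q_1,q_2)}$. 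The only point to verify is that summing $\hat\Psi(\hat a)_{(q_1,q_2),(r_1,r_2)}$ over $\hat a\neq-\infty$ recovers the probability $\Psi(a)_{q_2,r_2}$ of the unique letter driving $q_2\to r_2$ and $q_1\to r_1$, and that the $-\infty$ transition collects exactly the letters for which $q_1$ has no successor; both are immediate from the definitions.

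Next I would record that, again by determinism, $\|e_{q_i}\Psi(w_n)\|$ is the product of the transition probabilities along the unique path, so that $\ln L_n=\sum_{j=1}^{n}\hat a_j$, where $\hat a_j\in\hat\Sigma$ is precisely the observation emitted by $\A$ at step $j$ (with the convention $\ln L_n=-\infty$ once the $\pi_1$-support dies, matching the $-\infty$ emission into $s_\bot$). Consequently $\frac1n\ln L_n=\frac1n\sum_{j=1}^{n}\hat a_j$, i.e.\ $\liexp$ is the limiting time-average of the emitted increments along the run of $\oA$. It then remains to evaluate this average. Almost surely the chain $\oA$ from $\iota$ enters a unique bottom SCC $C$ after some a.s.\ finite time $\tau$ and stays there. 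If $C=\{s_\bot\}$, then some $\hat a_j=-\infty$, so $\ln L_n=-\infty$ eventually and $\liexp=-\infty=\ell(\{s_\bot\})$. If $C\neq\{s_\bot\}$, then no $-\infty$ transition is ever taken (such a transition leads to $s_\bot$), so all $\hat a_j$ are finite; the prefix $\sum_{j\le\tau}\hat a_j$ is a fixed finite quantity contributing $0$ to the limit after division by $n$, while for the tail the strong ergodic theorem for the finite irreducible chain on $C$ gives $\frac1n\sum_j\hat a_j\to\mu\nu^\top=\ell(C)$ $\PP_\iota$-a.s., since $\nu_{(r_1,r_2)}$ is exactly the conditional expectation of the emitted increment given the current state. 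Hence $\liexp=\ell(C)$ $\PP_{\pi_2}$-a.s., so up to null sets $E_\ell$ coincides with the event that $\oA$ reaches a bottom SCC $C$ with $\ell(C)=\ell$, and taking $\PP_\iota$-probabilities yields the claimed identity.

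The main obstacle is the application of the ergodic theorem: the emitted increment $\hat a_j$ is not a function of the state $X_j$ alone but of the transition (indeed of the underlying letter), so one must either average the function $f(s):=\nu_s$ and control the martingale-difference remainder $\hat a_j-\nu_{X_j}$ via the SLLN for bounded martingale differences, or apply the strong ergodic theorem to the additive functional directly on the edge-chain. Either way, boundedness of the $\hat a_j$ within the bottom SCC (which fails only at $s_\bot$, handled separately) is what makes this step routine.
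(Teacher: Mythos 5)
Your proof is correct, but it takes a genuinely more direct route than the paper. The paper proves this lemma by re-running the argument of \cref{lem:expoprop}.3: one maps runs of $\H$ to paths of $\oA$, conditions on a finite path that drives $\oA$ into a bottom SCC $C$, and invokes the fact that started from within $C$ the likelihood exponent is almost surely the single value $\ell(C)$ (with $\ell(C)=\mu\nu^\top$ identified by the ergodic theorem in the main text), handling the transient prefix via L\'evy's 0--1 law and the normalisation of both numerator and denominator by $\|e_q\Psi(w_n)\|$. You instead exploit determinism more aggressively: since each $\|e_{q_i}\Psi(w_n)\|$ is a single product of transition probabilities, $\ln L_n$ is \emph{exactly} the additive functional $\sum_{j\le n}\hat a_j$ of the finite chain $\oA$, so the transient prefix contributes $O(1/n)$ to the Ces\`aro average and the whole statement follows from the strong ergodic theorem on the reached bottom SCC (plus, as you rightly flag, either the edge-chain version of that theorem or the SLLN for bounded martingale differences, because $\hat a_j$ depends on the emitted letter and not just on the state pair). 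This buys a self-contained argument that does not lean on the general machinery behind \cref{lem:expoprop}, at the cost of redoing the identification of the observation process with $\oA$; the paper's version buys brevity by reusing that machinery wholesale. Your separate treatment of $s_\bot$ (a $-\infty$ emission is absorbing, and all other emissions are bounded since $\hat\Sigma$ is finite) correctly closes the remaining cases.
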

The proof is essentially the same as in \cref{lem:expoprop}.3.
This gives us the following result.
\begin{restatable}{theorem}{thmdet} \label{thm:det}
Given a deterministic HMM $(Q, \Sigma, \Psi)$ with initial Dirac distributions $\pi_1, \pi_2$, one can compute in polynomial time
\begin{enumerate}
\item $\Lambda_{\pi_1,\pi_2}$ as a set of expressions of the form $\sum_i x_i \ln y_i$ where $x_i,y_i \in \QQ$, and
\item $\Pr_{\pi_2}(E_\ell)$ for each such $\ell \in \Lambda_{\pi_1,\pi_2}$.
\end{enumerate}
\end{restatable}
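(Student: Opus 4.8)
The plan is to reduce both computations to the finite Markov chain $\oA$ (the embedded Markov chain of the HMM $\A$ constructed above) and to invoke \cref{lem:polyprop}. First I would observe that, because $\H$ is deterministic, from a pair $(q_1,q_2)$ and a letter $a$ there is at most one successor pair $(r_1,r_2)$; hence $\A$ has only $|Q|^2+1$ states and can be built in polynomial time, with each finite observation $\hat a$ being $\ln$ of a rational formed from two entries of $\Psi(a)$, and the single observation $-\infty$ leading to $s_\bot$. Running a linear-time SCC decomposition on $\oA$ I would identify its bottom SCCs and, by a graph search, those reachable from $\iota = e_{(q_1,q_2)}$. Since a bottom SCC of a finite Markov chain is entered with positive probability exactly when it is reachable, \cref{lem:polyprop} gives $\Lambda_{\pi_1,\pi_2} = \{\ell(C) \mid C \text{ a bottom SCC reachable from } \iota\}$, so it remains to compute each $\ell(C)$ and each hitting probability.

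For part~1 and a fixed non-$\{s_\bot\}$ bottom SCC $C$, I would solve the linear system characterising the stationary distribution $\mu \in [0,1]^C$ of $\oA$ restricted to $C$; by Cramer's rule the entries of $\mu$ are rationals of polynomial bit-size. Because $C$ is bottom and excludes $s_\bot$, no $-\infty$-transition leaves any state of $C$, so every observation contributing to $\nu$ is finite, i.e.\ $\ln$ of a positive rational; consequently each $\nu_{(r_1,r_2)} = \sum_{\hat a} \|e_{(r_1,r_2)}\hat\Psi(\hat a)\|\,\hat a$ is a rational combination of logarithms of rationals. Then $\ell(C) = \mu\nu^\top$ is again of the form $\sum_i x_i \ln y_i$ with $x_i, y_i \in \QQ$, exactly as required, while $\ell(\{s_\bot\}) = -\infty$ is output directly. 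For part~2 I would compute the hitting probabilities $\PP_{\iota}(\text{reach } C)$ for each bottom SCC $C$ by solving the standard linear system for absorption probabilities in the finite chain $\oA$ (again polynomial-time, rational), and then set $\PP_{\pi_2}(E_\ell) = \sum_{C :\ \ell(C)=\ell} \PP_{\iota}(\text{reach } C)$, which is justified by \cref{lem:polyprop}.

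The one genuinely delicate step, and the main obstacle, is grouping the bottom SCCs by the value of $\ell(C)$: to report $\Lambda_{\pi_1,\pi_2}$ as a \emph{set} and to sum the correct hitting probabilities in part~2, I must decide, given two expressions $\sum_i x_i \ln y_i$ and $\sum_j x'_j \ln y'_j$, whether they denote the same real number. Clearing the (polynomial-size) rational coefficients to integers $n_i, n'_j$, this amounts to testing the multiplicative relation $\prod_i y_i^{n_i} = \prod_j (y'_j)^{n'_j}$ between positive rationals with exponents given in binary. Naively the two sides are exponentially large, so I would avoid forming them explicitly: instead I would compute a GCD-free (coprime) basis of the finitely many numerators and denominators involved, which is possible in polynomial time, rewrite every $y_i$ over this basis, and then the relation holds iff the induced integer exponent vectors coincide, since distinct elements of a coprime basis are multiplicatively independent. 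The value $-\infty$ is handled separately as a distinguished symbol. Everything else --- the size and construction of $\A$, the SCC decomposition, and the two linear systems --- is routine linear algebra and graph search made available by \cref{lem:polyprop}, and runs in polynomial time over the rationals.
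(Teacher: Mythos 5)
Your proposal is correct and follows essentially the same route as the paper: reduce to the polynomial-size chain $\oA$, invoke \cref{lem:polyprop}, compute the stationary distributions and hitting probabilities by linear algebra, and then group bottom SCCs by testing equality of expressions $\sum_i x_i \ln y_i$. The only difference is that where the paper cites~\cite{EtessamiSY14} for that equality test, you give a self-contained (and correct) argument via factor refinement into a GCD-free basis and multiplicative independence of its elements, which is essentially a re-proof of the cited result.
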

\begin{proof}[Proof sketch]
The theorem follows mostly from \cref{lem:polyprop}, with the slight complication that for part~2 we have to check numbers of the form $\sum_i x_i \ln y_i$ (where $x_i,y_i \in \QQ$) for equality.
But this can be done in polynomial time as shown in~\cite{EtessamiSY14}.
\end{proof}

\section{Conclusions} \label{sec:conclusions}
We have shown that the performance of the SPRT is tightly connected with likelihood exponents.
These numbers are related to Lyapunov exponents and can be viewed as a distance measure between HMMs.
We have shown that the number of likelihood exponents is quadratic in the number of states.
The associated computational problems tend to be complex (PSPACE-hard), but become tractable for deterministic HMMs.
In our work we did not make any ergodicity assumptions on the HMMs, unlike in earlier works from mathematics and engineering such as \cite{JuangRabiner85,ChenWillett00,Fuh03,GrossiLops08}.
Efficient approximation of likelihood exponents, in theory or praxis, remains an open problem.

\bibliography{lyapunovhmm}

\begin{thebibliography}{10}

\bibitem{Weather}
P.~Ailliot, C.~Thompson, and P.~Thomson.
\newblock Space-time modelling of precipitation by using a hidden {M}arkov
  model and censored {G}aussian distributions.
\newblock {\em Journal of the Royal Statistical Society}, 58(3):405--426, 2009.

\bibitem{AkshayBFG19}
S.~Akshay, H.~Bazille, E.~Fabre, and B.~Genest.
\newblock Classification among hidden {M}arkov models.
\newblock In {\em Proceedings of the Annual Conference on Foundations of
  Software Technology and Theoretical Computer Science (FSTTCS)}, volume 150 of
  {\em LIPIcs}, pages 29:1--29:14. Schloss Dagstuhl - Leibniz-Zentrum f{\"{u}}r
  Informatik, 2019.
\newblock URL: \url{https://doi.org/10.4230/LIPIcs.FSTTCS.2019.29}, \href
  {http://dx.doi.org/10.4230/LIPIcs.FSTTCS.2019.29}
  {\path{doi:10.4230/LIPIcs.FSTTCS.2019.29}}.

\bibitem{GeneFinding}
M.~Alexandersson, S.~Cawley, and L.~Pachter.
\newblock {SLAM}: Cross-species gene finding and alignment with a generalized
  pair hidden {M}arkov model.
\newblock {\em Genome Research}, 13:469--502, 2003.

\bibitem{BertrandHL16}
N.~Bertrand, S.~Haddad, and E.~Lefaucheux.
\newblock Accurate approximate diagnosability of stochastic systems.
\newblock In {\em Proceedings of Language and Automata Theory and Applications
  (LATA)}, volume 9618 of {\em Lecture Notes in Computer Science}, pages
  549--561. Springer, 2016.
\newblock URL: \url{https://doi.org/10.1007/978-3-319-30000-9\_42}, \href
  {http://dx.doi.org/10.1007/978-3-319-30000-9\_42}
  {\path{doi:10.1007/978-3-319-30000-9\_42}}.

\bibitem{bog2007}
V.I. Bogachev.
\newblock {\em Measure Theory}.
\newblock Number v. 1 in Measure Theory. Springer Berlin Heidelberg, 2007.
\newblock URL: \url{https://books.google.co.uk/books?id=CoSIe7h5mTsC}.

\bibitem{Borodin77}
A.~Borodin.
\newblock On relating time and space to size and depth.
\newblock {\em {SIAM} Journal of Computing}, 6(4):733--744, 1977.
\newblock URL: \url{https://doi.org/10.1137/0206054}, \href
  {http://dx.doi.org/10.1137/0206054} {\path{doi:10.1137/0206054}}.

\bibitem{BorodinGathenHopcroft82}
A.~Borodin, J.~von~zur Gathen, and J.E. Hopcroft.
\newblock Fast parallel matrix and {GCD} computations.
\newblock {\em Information and Control}, 52(3):241--256, 1982.
\newblock URL: \url{https://doi.org/10.1016/S0019-9958(82)90766-5}, \href
  {http://dx.doi.org/10.1016/S0019-9958(82)90766-5}
  {\path{doi:10.1016/S0019-9958(82)90766-5}}.

\bibitem{ChenWillett00}
B.~Chen and P.~Willett.
\newblock Detection of hidden {M}arkov model transient signals.
\newblock {\em IEEE Transactions on Aerospace and Electronic Systems},
  36(4):1253--1268, 2000.
\newblock \href {http://dx.doi.org/10.1109/7.892673}
  {\path{doi:10.1109/7.892673}}.

\bibitem{Gesture}
F.-S. Chen, C.-M. Fu, and C.-L. Huang.
\newblock Hand gesture recognition using a real-time tracking method and hidden
  {M}arkov models.
\newblock {\em Image and Vision Computing}, 21(8):745--758, 2003.

\bibitem{kief14}
T.~Chen and S.~Kiefer.
\newblock On the total variation distance of labelled {M}arkov chains.
\newblock In {\em Proceedings of the Joint Meeting of the Twenty-Third EACSL
  Annual Conference on Computer Science Logic (CSL) and the Twenty-Ninth Annual
  ACM/IEEE Symposium on Logic in Computer Science (LICS)}, pages 33:1--33:10,
  Vienna, Austria, 2014.

\bibitem{DNA-modeling}
G.A. Churchill.
\newblock Stochastic models for heterogeneous {DNA} sequences.
\newblock {\em Bulletin of Mathematical Biology}, 51(1):79--94, 1989.

\bibitem{CortesMRdistance}
C.~Cortes, M.~Mohri, and A.~Rastogi.
\newblock {$L_p$} distance and equivalence of probabilistic automata.
\newblock {\em International Journal of Foundations of Computer Science},
  18(04):761--779, 2007.

\bibitem{SignalProcessing}
M.S. Crouse, R.D. Nowak, and R.G. Baraniuk.
\newblock Wavelet-based statistical signal processing using hidden {M}arkov
  models.
\newblock {\em IEEE Transactions on Signal Processing}, 46(4):886--902, April
  1998.

\bibitem{Storm}
C.~Dehnert, S.~Junges, J.-P. Katoen, and M.~Volk.
\newblock A {Storm} is coming: A modern probabilistic model checker.
\newblock In {\em Proceedings of Computer Aided Verification (CAV)}, pages
  592--600. Springer, 2017.

\bibitem{durbin1998biological}
R.~Durbin.
\newblock {\em Biological Sequence Analysis: Probabilistic Models of Proteins
  and Nucleic Acids}.
\newblock Cambridge University Press, 1998.

\bibitem{HMM-comp-biology}
S.R. Eddy.
\newblock What is a hidden {M}arkov model?
\newblock {\em Nature Biotechnology}, 22(10):1315--1316, October 2004.

\bibitem{EtessamiSY14}
K.~Etessami, A.~Stewart, and M.~Yannakakis.
\newblock A note on the complexity of comparing succinctly represented
  integers, with an application to maximum probability parsing.
\newblock {\em {ACM} Trans. Comput. Theory}, 6(2):9:1--9:23, 2014.
\newblock URL: \url{https://doi.org/10.1145/2601327}, \href
  {http://dx.doi.org/10.1145/2601327} {\path{doi:10.1145/2601327}}.

\bibitem{Fuh03}
C.-D. Fuh.
\newblock {SPRT and CUSUM in hidden Markov models}.
\newblock {\em The Annals of Statistics}, 31(3):942--977, 2003.
\newblock URL: \url{https://doi.org/10.1214/aos/1056562468}, \href
  {http://dx.doi.org/10.1214/aos/1056562468}
  {\path{doi:10.1214/aos/1056562468}}.

\bibitem{germicorl08}
László Gerencsér, G.~Michaletzky, and Zsanett Orlovits.
\newblock Stability of block-triangular stationary random matrices.
\newblock {\em Systems \& Control Letters}, pages 620--625, 08 2008.
\newblock \href {http://dx.doi.org/10.1016/j.sysconle.2008.01.001}
  {\path{doi:10.1016/j.sysconle.2008.01.001}}.

\bibitem{GrossiLops08}
E.~Grossi and M.~Lops.
\newblock Sequential detection of {M}arkov targets with trajectory estimation.
\newblock {\em IEEE Transactions on Information Theory}, 54(9):4144--4154,
  2008.
\newblock \href {http://dx.doi.org/10.1109/TIT.2008.928261}
  {\path{doi:10.1109/TIT.2008.928261}}.

\bibitem{JuangRabiner85}
B.-H. Juang and L.~R. Rabiner.
\newblock A probabilistic distance measure for hidden {M}arkov models.
\newblock {\em AT\&T Technical Journal}, 64(2):391--408, 1985.
\newblock URL:
  \url{https://onlinelibrary.wiley.com/doi/abs/10.1002/j.1538-7305.1985.tb00439.x},
  \href {http://dx.doi.org/https://doi.org/10.1002/j.1538-7305.1985.tb00439.x}
  {\path{doi:https://doi.org/10.1002/j.1538-7305.1985.tb00439.x}}.

\bibitem{karasha09}
J.-Y. Kao, N.~Rampersad, and J.~Shallit.
\newblock On {NFA}s where all states are final, initial, or both.
\newblock {\em Theoretical Computer Science}, 410(47):5010--5021, 2009.
\newblock URL:
  \url{https://www.sciencedirect.com/science/article/pii/S0304397509005477},
  \href {http://dx.doi.org/https://doi.org/10.1016/j.tcs.2009.07.049}
  {\path{doi:https://doi.org/10.1016/j.tcs.2009.07.049}}.

\bibitem{kief11}
S.~Kiefer, A.S. Murawski, J.~Ouaknine, B.~Wachter, and J.~Worrell.
\newblock Language equivalence for probabilistic automata.
\newblock In {\em Proceedings of the 23rd International Conference on Computer
  Aided Verification (CAV)}, volume 6806 of {\em LNCS}, pages 526--540.
  Springer, 2011.

\bibitem{kief16}
S.~Kiefer and A.P. Sistla.
\newblock Distinguishing hidden {M}arkov chains.
\newblock In {\em Proceedings of the 31st Annual Symposium on Logic in Computer
  Science (LICS)}, pages 66--75, New York, USA, 2016. ACM.

\bibitem{ProteinStructure}
A.~Krogh, B.~Larsson, G.~von Heijne, and E.L.L. Sonnhammer.
\newblock Predicting transmembrane protein topology with a hidden {Markov}
  model: Application to complete genomes.
\newblock {\em Journal of Molecular Biology}, 305(3):567--580, 2001.

\bibitem{KNP11}
M.~Kwiatkowska, G.~Norman, and D.~Parker.
\newblock {PRISM} 4.0: Verification of probabilistic real-time systems.
\newblock In {\em Proceedings of Computer Aided Verification (CAV)}, volume
  6806 of {\em LNCS}, pages 585--591. Springer, 2011.

\bibitem{rockhart13}
R.~Langrock, B.~Swihart, B.~Caffo, N.~Punjabi, and C.~Crainiceanu.
\newblock Combining hidden {M}arkov models for comparing the dynamics of
  multiple sleep electroencephalograms.
\newblock {\em Statistics in medicine}, 32, 08 2013.
\newblock \href {http://dx.doi.org/10.1002/sim.5747}
  {\path{doi:10.1002/sim.5747}}.

\bibitem{Pap94}
C.M. Papadimitriou.
\newblock {\em Computational complexity}.
\newblock Addison-Wesley, 1994.

\bibitem{Paz71}
A.~Paz.
\newblock {\em Introduction to Probabilistic Automata (Computer Science and
  Applied Mathematics)}.
\newblock Academic Press, Inc., Orlando, FL, USA, 1971.

\bibitem{prot13}
V.Yu. Protasov.
\newblock Asymptotics of products of nonnegative random matrices.
\newblock {\em Functional Analysis and Its Applications}, 47:138--147, 2013.

\bibitem{ProtasovJungers13}
V.Yu. Protasov and R.M. Jungers.
\newblock Lower and upper bounds for the largest {L}yapunov exponent of
  matrices.
\newblock {\em Linear Algebra and its Applications}, 438(11):4448--4468, 2013.
\newblock URL:
  \url{https://www.sciencedirect.com/science/article/pii/S002437951300089X},
  \href {http://dx.doi.org/https://doi.org/10.1016/j.laa.2013.01.027}
  {\path{doi:https://doi.org/10.1016/j.laa.2013.01.027}}.

\bibitem{Rabiner89}
L.R. Rabiner.
\newblock A tutorial on hidden {M}arkov models and selected applications in
  speech recognition.
\newblock {\em Proceedings of the IEEE}, 77(2):257--286, 1989.

\bibitem{schut61}
M.P. Schützenberger.
\newblock On the definition of a family of automata.
\newblock {\em Information and Control}, 4(2):245--270, 1961.

\bibitem{shes13}
Theodore~J. Sheskin.
\newblock Conditional mean first passage time in a markov chain.
\newblock {\em International Journal of Management Science and Engineering
  Management}, 8(1):32--37, 2013.
\newblock \href {http://dx.doi.org/10.1080/17509653.2013.783187}
  {\path{doi:10.1080/17509653.2013.783187}}.

\bibitem{Sutter21}
D.~Sutter, O.~Fawzi, and R.~Renner.
\newblock Bounds on {L}yapunov exponents via entropy accumulation.
\newblock {\em IEEE Transactions on Information Theory}, 67(1):10--24, 2021.
\newblock \href {http://dx.doi.org/10.1109/TIT.2020.3026959}
  {\path{doi:10.1109/TIT.2020.3026959}}.

\bibitem{Tzeng92}
W.-G. Tzeng.
\newblock A polynomial-time algorithm for the equivalence of probabilistic
  automata.
\newblock {\em SIAM J. Comput.}, 21(2):216--227, April 1992.

\bibitem{wald45}
A.~Wald.
\newblock {Sequential Tests of Statistical Hypotheses}.
\newblock {\em The Annals of Mathematical Statistics}, 16(2):117 -- 186, 1945.
\newblock URL: \url{https://doi.org/10.1214/aoms/1177731118}, \href
  {http://dx.doi.org/10.1214/aoms/1177731118}
  {\path{doi:10.1214/aoms/1177731118}}.

\bibitem{WaldWolfowitz48}
A.~Wald and J.~Wolfowitz.
\newblock Optimum character of the sequential probability ratio test.
\newblock {\em The Annals of Mathematical Statistics}, 19(3):326--339, 1948.
\newblock URL: \url{http://www.jstor.org/stable/2235638}.

\end{thebibliography}

\appendix

\section{Proofs and Additional Material on \cref{sec:prelims}} \label{app:prelims}

\subsection{Proof of \cref{convergenceLn}}

\convergenceLn*

\begin{proof}
The first part is \cite[Proposition~6]{kief14}.
Towards the second part, the following equalities hold.
\stefan{Define notation $\land$}
\begin{align*}
1 - d(\pi_1, \pi_2) & = \lim_{n \rightarrow \infty} \sum_{w \in \Sigma^n} \min\{\| \pi_1 \Psi(w) \|, \| \pi_2 \Psi(w) \|\} && \text{by \cite[Theorem~ 7]{kief14}}\\
& = \lim_{n \rightarrow \infty} \sum_{w \in \Sigma^n} \min\{L_n(w), 1 \}\| \pi_2 \Psi(w)\| \\
& = \lim_{n \rightarrow \infty} \EE_{\pi_2}\big[\min\{ L_n, 1\}  \big] \\
& = \EE_{\pi_2} \big[ \lim_{n \rightarrow \infty}  \min\{L_n, 1\}  \big] && \text{as } 0 \leq \min\{L_n(w), 1\} \leq 1.
\end{align*}
Then, $\lim_{n \rightarrow \infty}  \min\{L_n, 1\} = 0 \iff \lim_{n \rightarrow \infty}  L_n = 0$.
\end{proof}

\subsection{Details on \cref{sleepcycles}} \label{app:sleepcycles}

In \cite{rockhart13} they derived two embedded Markov chains with the following transition matrices:
\begin{equation*}T_1 = \begin{bmatrix}
0.793 & 0.099 & 0.035 & 0.064 &	0.009 \\
0.078 & 0.769 & 0.006 & 0.144 & 0.003 \\
0.018 & 0.004 & 0.833 & 0.134 & 0.012  \\
0.022 & 0.094 & 0.054 & 0.827 & 0.002 \\
0.011 & 0.005 & 0.035 & 0.005 & 0.945  \\
\end{bmatrix}, T_2 = \begin{bmatrix}
0.641 & 0.109 & 0.031 & 0.040 & 0.015 \\
0.202 & 0.699 & 0.008 & 0.089 & 0.003 \\
0.026 & 0.002 & 0.823 & 0.062 & 0.035 \\
0.123 & 0.189 & 0.114 & 0.808 & 0.016 \\
0.007 & 0.001 & 0.024 & 0.001 & 0.931 \\
\end{bmatrix}.
\end{equation*}

Their HMMs are state-labelled.
For each state $i$, they fit a Dirichlet probability density function (pdf) $f_i$ describing the distribution of observations in $\Delta^3$ emitted at state $i$.
The pdfs of diseased and healthy individuals were so similar that they used the same pdf for both HMMs. 
Thus the two HMMs differ only in the transition probabilities.

Since $\Delta^3$ is infinite and in this paper we assume finite observation alphabets, we partition the simplex into the sets
\[
 U_k = \{x \in \Delta^3 \mid f_k(x) \geq \sup_{i} f_i(x)\}
\]
for $k = 1, \dots, 5$.
The set $U_k$ contains the points in $\Delta^3$ most likely to be produced in state~$k$.
We assign a letter $a_k$ for each $U_k$, and define a set of observations $\Sigma = \{a_1, \dots, a_5\}$.
Thus, the probability of producing letter $a_k$ from state $i$ is given as $O_{i,k} = \int_{U_k} f_i(x)\, dx$. We estimated the entries of $O$ using a numerical Monte Carlo technique. We generated 100,000 samples from all 5 Dirichlet distributions in their paper which yielded the estimate
\begin{equation*}
O = \begin{pmatrix}
0.9172&0.0803&0&0.0002&0.0024\\
0.0719&0.8606&0&0.0665&0.0010\\
0&0.0007&0.8546&0.1055&0.0392\\
0.0008&0.0998&0.0663&0.8257&0.0075\\
0.0109&0.0094&0.1046&0.0334&0.8416\\
\end{pmatrix}.
\end{equation*}
Since we consider transition labelled HMMs, we define transition functions $\Psi_1, \Psi_2$ with
\[
 \Psi_m(a_k)_{i,j} = \big( T_m \big)_{i,j} O_{i,k}
\] for $m = 1, 2$.
Let $Q = [10]$. We construct the HMM $(Q, \Sigma, \Psi)$ where
\begin{equation*}
\Psi(a) = \begin{pmatrix}
\Psi_1(a) & 0 \\
0 & \Psi_2(a) \\
\end{pmatrix}
\end{equation*}
for each $a \in \Sigma$.

Let $\pi_1$ and $\pi_2$ be the Dirac distributions on states 1 and 6 respectively. These initial distributions correspond to healthy and diseased individuals started from sleep state 1.

\section{Proofs from \Cref{liexpsubsect}}

\subsection{Proof of \cref{sprtcorrectness}}

\sprtcorrectness*

\begin{proof}
We wish to control the probabilities $\PP_{\pi_2}\big( L_N > B\big)$ and $\PP_{\pi_1}\big( L_N < A\big)$ by choosing suitable values of $A$ and $B$. Write $N := N_{\alpha, \beta}$ and let $W_n^1 = \{ w \in \Sigma^\omega \mid  A \leq L_m(w) \leq B ~\forall m < n, L_n < A\}$ then
\begin{align*}
\PP_{\pi_1}\big( L_N < A \big) & = \sum_{n = 1}^\infty \PP_{\pi_1}\big( W_n^1 \big) = \sum_{n = 1}^\infty \sum_{w \in W_n^1} \pi_1 \Psi(w) \1^T = \sum_{n = 1}^\infty \sum_{w \in W_n^1} L_n(w) \pi_2 \Psi(w) \1^T \\
& \leq A \sum_{n = 1}^\infty  \sum_{w \in W_n^1} \pi_2 \Psi(w) \1^T = A \sum_{n = 1}^\infty  \PP_{\pi_2}\big( W_n^1 \big) = A \PP_{\pi_2}\big( L_N < A \big). \\
\end{align*}
Similarly, we may derive $\PP_{\pi_2}\big( L_N > b \big) \geq \frac{1}{b} \PP_{\pi_1}\big( L_N > b\big)$ so it follows that
\begin{align*}
A & \geq \frac{\PP_{\pi_1}\big(  L_N < A\big)}{\PP_{\pi_2}\big(  L_N < A\big)} = \frac{\PP_{\pi_1}\big(  L_N < A\big)}{1 - \PP_{\pi_2}\big(  L_N > B\big)} \\
B & \leq \frac{\PP_{\pi_1}\big(  L_N > B\big)}{\PP_{\pi_2}\big(  L_{N} > B\big)} = \frac{1 - \PP_{\pi_1}\big( L_N < A\big)}{\PP_{\pi_2}\big(  L_N > B\big)}\\
\end{align*}
to guarantee the error bounds $\alpha = \PP_{\pi_1}\big( L_N < A\big)$ and $\beta = \PP_{\pi_2}\big( L_N > B\big)$.
\end{proof}

\subsection{Proof of \Cref{asymptoticwald}}

\asymptoticwald*

We will prove \Cref{asymptoticwald} later using results in this section

\probexpzero*
Towards the proof of \Cref{probexp0} we use the following which is Theorem 5 from \cite{kief16}.
\begin{lemma}\label{kief16thm5}
Let $(Q, \Sigma, \Psi)$ be an HMM and let $\pi_1$ and $\pi_2$ be initial distributions. If $\pi_1$ and $\pi_2$ are distinguishable then there is $c > 0$ such that
\begin{equation*}
\PP_{\pi_2}\Big( L_{2|Q|n} \leq 1 \Big) - \PP_{\pi_1}\Big( L_{2|Q|n} \geq 1 ) \Big) \geq 1 - 2\exp \big(-\frac{c^2}{18}n\big).
\end{equation*}
\end{lemma}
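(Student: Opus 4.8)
The plan is to read the left-hand side as a bound on the two error probabilities of the threshold-$1$ likelihood test (reading the second event as $L_N \le 1$, as otherwise both probabilities tend to~$1$). Writing $N := 2|Q|n$,
\[
\PP_{\pi_2}(L_N \le 1) - \PP_{\pi_1}(L_N \le 1) \;=\; 1 - \PP_{\pi_2}(L_N > 1) - \PP_{\pi_1}(L_N \le 1),
\]
so it suffices to bound each of $\PP_{\pi_2}(L_N > 1)$ and $\PP_{\pi_1}(L_N \le 1)$ by $\exp(-\tfrac{c^2}{18}n)$; adding the two bounds produces the factor~$2$. By the duality between $L_n$ and $1/L_n$ (swapping the roles of $\pi_1$ and $\pi_2$) the two estimates are symmetric, so I would focus on $\PP_{\pi_2}(L_N > 1) = \PP_{\pi_2}(\ln L_N > 0)$ under the true measure $\PP_{\pi_2}$, where $\ln L_n \to -\infty$ almost surely by distinguishability and \cref{convergenceLn}.2. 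The task is to upgrade this qualitative divergence to an exponential tail, which I would do by establishing a \emph{uniform} negative drift of $\ln L$ over blocks of length $2|Q|$ and then applying a Chernoff bound over the $n$ blocks.

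The core is a uniform one-block drift estimate, and this is where distinguishability and the horizon $2|Q|$ enter. I would track the belief pair $(\mu_1,\mu_2)$ with $\mu_i \propto \pi_i \Psi(v)$ along a prefix~$v$, as in the cross-product construction behind \cref{lem:expoprop}. The conditional expected one-step decrease of $\ln L$ under $\PP_{\mu_2}$ equals the Kullback--Leibler divergence $\mathrm{KL}(p_{\mu_2}\,\|\,p_{\mu_1})$ of the two predictive next-letter distributions; it is strictly positive exactly when they differ and is $+\infty$ when $\supp(\mu_2\Psi(a)) \not\subseteq \supp(\mu_1\Psi(a))$. Now distinguishability forces, $\PP_{\pi_2}$-almost surely, that no reached belief pair is ever trace equivalent (otherwise $L_n$ would converge to a positive constant on a set of positive measure, contradicting \cref{convergenceLn}.2). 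A non-equivalent pair disagrees on some word, and by the Schützenberger dimension bound applied to the disjoint-union system on $2|Q|$ states it must already disagree on a word of length at most $2|Q|-1$~\cite{Tzeng92}; hence the expected decrease of $\ln L$ accumulated over any block of $2|Q|$ observations is strictly positive. A compactness argument over the closure of the reachable belief pairs then converts this pointwise positivity into a single constant: $\EE_{\pi_2}[D_k \mid \mathcal{F}_{k-1}] \le -2c$ for every block, where $D_k$ denotes the block increment of $\ln L$ and $\mathcal{F}_{k-1}$ the history up to the start of block~$k$.

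For the concentration I would exploit that $L_n$ is a nonnegative $\PP_{\pi_2}$-martingale (since $\sum_a \Psi(a)$ is stochastic), so $\EE_{\pi_2}[e^{D_k}\mid \mathcal{F}_{k-1}] = 1$; in particular the positive tail of $D_k$ is controlled, $\PP_{\pi_2}(D_k \ge t \mid \mathcal{F}_{k-1}) \le e^{-t}$. Truncating the (possibly heavy) negative tail, $\tilde D_k := \max(D_k, -T)$, preserves $\{\sum_k D_k > 0\} \subseteq \{\sum_k \tilde D_k > 0\}$, and for $T$ large enough retains the drift $\EE[\tilde D_k\mid \mathcal{F}_{k-1}] \le -c$ while giving a uniform variance bound; tuning so that the conditional variance proxy is at most~$9$ yields, for $\lambda = c/9 \in (0,1]$,
\[
\EE_{\pi_2}\!\left[e^{\lambda \tilde D_k}\mid \mathcal{F}_{k-1}\right] \;\le\; \exp\!\Big(-c\lambda + \tfrac{9}{2}\lambda^2\Big) \;=\; \exp\!\Big(-\tfrac{c^2}{18}\Big).
\]
Iterating by the tower property over the $n$ blocks and applying the Chernoff bound gives $\PP_{\pi_2}(L_N > 1) \le \EE_{\pi_2}[e^{\lambda \sum_k \tilde D_k}] \le \exp(-\tfrac{c^2}{18}n)$, and the dual computation under $\PP_{\pi_1}$ with $1/L_n$ bounds $\PP_{\pi_1}(L_N \le 1)$ by the same quantity.

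The main obstacle is the unboundedness of the increments, which the paper itself flags ($|\ln L_{n+1} - \ln L_n|$ can be unbounded), ruling out a direct Azuma--Hoeffding argument on $\ln L_n$. The remedy above is delicate: one must reconcile three requirements on the truncated increment $\tilde D_k$ — boundedness of its variance proxy (to get the factor~$9$, hence~$18$), retention of the $-c$ drift, and domination of the true increment — relying on the mean-one martingale identity to tame the positive side and on truncation for the negative side. Equally delicate is making the drift constant $c$ genuinely uniform: the reachable belief pairs form an infinite set, and the per-block KL could degenerate to~$0$ if these pairs accumulate at the trace-equivalent set. Excluding this is exactly the point where the \emph{global} strength of distinguishability ($d=1$, not merely pairwise non-equivalence of the sampled pairs) must be used, and I expect it to be the heart of the argument.
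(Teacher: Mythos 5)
You cannot be faulted for not matching the paper's proof here, because the paper has none: this lemma is imported verbatim as Theorem~5 of~\cite{kief16} (just as \cref{2016profilethm} is quoted as Lemma~9 of the same reference), and is used as a black box in the proof of \cref{probexp0}. Your reading of the statement is the right one --- as literally printed it fails for large $n$, since $\PP_{\pi_1}(L_{2|Q|n}\ge 1)\to 1$ under distinguishability, and interpreting the second event as $L_{2|Q|n}\le 1$ is exactly consistent with how the paper invokes the lemma (both $\PP_{\pi_2}(L_n^*\ge 1)$ and $\PP_{\pi_1}(L_n^*\le 1)$ bounded by $2\exp(-\frac{c^2}{18}n)$). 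Your skeleton --- two one-sided exponential bounds, exploiting that $L_n$ is a $\PP_{\pi_2}$-martingale and $1/L_n$ a $\PP_{\pi_1}$-martingale, with per-block drift coming from KL divergence of predictive distributions and Tzeng's length-$(2|Q|-1)$ disagreement bound --- is a sensible reconstruction. But as a proof it has two genuine gaps.

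First, the uniform drift constant. Almost-sure non-equivalence of every reached belief pair, plus Tzeng, gives strictly positive block-KL \emph{pointwise}, but your compactness step fails exactly where you flag it: the closure of the reachable belief pairs may meet the trace-equivalent variety, and nothing you argue excludes accumulation there, so the infimum of the block drift can be $0$. You explicitly defer this (``I expect it to be the heart of the argument''), and it is: the quantitative, polynomial-time-computable gap $c$ is precisely the nontrivial content of the machinery in~\cite{kief16}, so the proposal assumes rather than proves the lemma's core. Second, the concentration step. The per-block bound $\EE_{\pi_2}[e^{\lambda\tilde D_k}\mid\mathcal{F}_{k-1}]\le\exp(-c\lambda+\tfrac92\lambda^2)$ with a fixed variance proxy $9$ is asserted, not derived, and the three requirements you list are in tension: truncation at $-T$ erodes the drift as $T$ shrinks, while the variance proxy of a variable bounded below by $-T$ with only an exponential upper tail $\PP(D_k\ge t\mid\mathcal{F}_{k-1})\le e^{-t}$ grows with $T$; an exponential tail yields Bernstein-type, not sub-Gaussian-with-proxy-$9$, control, so the exponent $\frac{c^2}{18}$ does not follow. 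Indeed the constant $18=2\cdot 3^2$ is the signature of Azuma--Hoeffding applied to a martingale with increments literally bounded by~$3$, which strongly suggests the source proof runs concentration on a different, genuinely bounded process (built from conditional probabilities over $2|Q|$-blocks) rather than on a truncated $\ln L_n$. Your route could plausibly be repaired to give \emph{some} bound $\exp(-\kappa n)$ with $\kappa>0$ depending on $c$ --- which would suffice for every use of the lemma in this paper --- but it does not establish the stated inequality, and its two load-bearing steps are exactly the ones left open.
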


\begin{proof}[Proof of \Cref{probexp0}]
By \cref{lem:expoprop} there are a set of bottom SCCs $\mathcal{Z}$ in $\mathcal{B}$. Such that for all $Z \in \mathcal{Z}$ we have $\ell(Z) = \{0\}$. Let $\pi \in [0,1]^Q$ and $r \in Q$ such that $(\supp~\pi, r) \in Z$. Suppose that $\pi$ and $\delta_r$ are distinguishable then by \Cref{kief16thm5} both $\PP_{\delta_r}(L_n^* \geq 1) \leq 2\exp\big( -\frac{c^2}{18}n \big)$ and $\PP_{\pi}(L_n^* \leq 1) \leq 2\exp\big( -\frac{c^2}{18}n \big)$ where $L_n^*$ is the likelihood ratio started from initial distributions $\pi$ and $\delta_r$. Fix $-\frac{c^2}{18} < \alpha \leq 0$ and define the event $W_n = \{1 > L_n^* \geq \e^{n\alpha}\}$. Then
\begin{align*}
\PP_{\delta_r}(\lim_{n \rightarrow \infty} \frac1n \ln L_n^* > \alpha) & \leq \PP_{\delta_r}(\liminf_n \{\frac1n \ln L_n^* \geq \alpha\}) \\
& \leq \liminf_n \PP_{\delta_r}(\frac1n \ln L_n^* \geq \alpha) \\
& \leq \liminf_n \PP_{\delta_r}(L_n^* \geq \e^{n\alpha}) \\
& = \liminf_n \Big[ \PP_{\delta_r}(1 > L_n^* \geq \e^{n\alpha}) +  \PP_{\pi_2}(L_n^* \geq 1) \Big]\\
& \leq \liminf_n \Big[ \sum_{w \in W_n} \delta_r \Psi(w) \1^T + 2\exp\big( -\frac{c^2}{18}n \big) \Big]\\
& \leq \liminf_n \Big[ e^{-n\alpha} \sum_{w \in W_n} \pi \Psi(w) \1^T\Big]\\
& \leq \liminf_n \Big[ e^{-n\alpha} \PP_{\pi}(L_n^* < 1)\Big]\\
& \leq \liminf_n \Big[ 2e^{-n\alpha} \exp\big( -\frac{c^2}{18}n \big)\Big]\\
& = 0.
\end{align*}
In particular, $\PP_{\pi_2}(\lim_{n \rightarrow \infty} \frac1n \ln L_n = 0 ) = 0$ which contradicts $\Lambda = \{0\}$. Hence $\pi$ and $\delta_r$ are not distinguishable and so $\PP_{\delta_r}$-almost surely, we have $\lim_{n \rightarrow \infty} L_n^* > 0$. By conditioning on the events $\{a_1 r_1 \cdots a_n r_n \in (\Sigma Q)^* \mid \supp~\pi_1 \Psi(w) = \supp~ \pi, r_n = r\}$ it follows that $E_0 = \{\lim_{n \rightarrow \infty} L_n > 0\}$. We now show the second equality. If $\lim_{n \rightarrow \infty} L_n > 0$ then for $\alpha, \beta$ small enough $L_n$ never crosses the SPRT bounds. Hence, we have $\{\lim_{n \rightarrow \infty} L_n > 0\} \subseteq \bigcup_{\alpha, \beta} \{N_{\alpha, \beta} = \infty\}$. For the converse inclusion, suppose that $N_{\alpha, \beta} = \infty$ for some $\alpha, \beta$ this would contradict $\lim_{n \rightarrow \infty} L_n = 0$ since then $N_{\alpha, \beta}$ would be $\PP_{\pi_2}$-almost surely finite.
\end{proof}

\subsection{Proof of \cref{prop:neginf}}

\propneginf*
\begin{proof}
The right-to-left inclusion is clear.
\newcommand{\pmin}{p_{\mathit{min}}}
Towards the converse, let $\pmin>0$ be the minimum non-zero entry in~$\pi_1$ and all $\Psi(a)$ where $a \in \Sigma$.
Suppose that $L_n>0$ holds for all~$n$.
Then we have for all $n \ge 1$:
\begin{align*}
 \frac1n \ln L_n \ &=\ \frac1n \ln \frac{\| \pi_1 \Psi(w_n) \|}{\| \pi_2 \Psi(w_n) \|} \ \ge\ \frac1n  \ln \| \pi_1 \Psi(w_n) \| \ \ge\ \frac1n \ln \pmin^{n+1} \ = \ \frac{n+1}{n} \ln \pmin \\
                   &\ge\ 2 \ln \pmin\,.
\end{align*}
Thus, $\liexp \ne -\infty$. We have $\sup_{\alpha, \beta} N_{\alpha, \beta} \leq N_\perp$. Also,
\begin{equation*}
\bigcap_{\alpha, \beta} \{L_n \not\in (\frac{\alpha}{1-\beta},\frac{1-\alpha}{\beta})\} = \{L_n = 0\}
\end{equation*}
for all $n \in \NN$ and so $\sup_{\alpha, \beta} N_{\alpha, \beta} = N_\perp$. The final claim follows because $\{N_\perp < \infty\} = E_{-\infty}$.
\stefan{The things about $N_{\alpha, \beta}$ might require justification.}
\end{proof}

\subsection{Proof of \cref{liexpmotivation}} \label{app:liexpmotivation}

Towards the proof of \cref{liexpmotivation} we first show the following lemma.

\begin{lemma}\label{unifintegofN}
The set of random variables $\{\frac{N_{\alpha, \beta}}{-\ln \alpha}\mid 0 < \alpha, \beta \leq \frac12\}$ is uniformly integrable with respect to the measure $\PP_{\pi_2}$; i.e.
\begin{equation*}
\lim_{K \rightarrow \infty} \sup_{\alpha, \beta} \EE_{\pi_2} \left[ -\frac{N_{\alpha, \beta}}{\ln \alpha}\1_{\frac{N_{\alpha, \beta}}{-\ln \alpha} \geq - K} \right] = 0.
\end{equation*}
\end{lemma}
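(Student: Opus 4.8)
The plan is to deduce uniform integrability from a uniform (in $\alpha,\beta$) geometric tail bound for the scaled stopping time $X_{\alpha,\beta} := N_{\alpha,\beta}/(-\ln\alpha)$. The starting point is the deterministic inclusion $\{N_{\alpha,\beta}>n\} \subseteq \{L_n \ge e^{A}\} \subseteq \{L_n \ge \alpha\}$, valid because the test has not stopped by time $n$ only if $\ln L_m \ge A = \ln\frac{\alpha}{1-\beta} \ge \ln\alpha$ for every $m\le n$. Hence, for any $s\in(0,1)$, Markov's inequality applied to the nonnegative variable $L_n^s$ gives $\PP_{\pi_2}(N_{\alpha,\beta}>n) \le \alpha^{-s}\,\EE_{\pi_2}[L_n^s]$. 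If I can show $\EE_{\pi_2}[L_n^s] \le C\rho^{\,n}$ for some $s\in(0,1)$ and $C>0$, $\rho\in(0,1)$ not depending on $\alpha,\beta$, then taking $n \ge t(-\ln\alpha)$ and using $-\ln\alpha \ge \ln 2$ (as $\alpha\le\tfrac12$) yields $\PP_{\pi_2}(X_{\alpha,\beta} \ge t) \le C\,e^{(-\ln\alpha)(s - t|\ln\rho|)} \le C\,e^{-\gamma t}$, where $\gamma = \tfrac12(\ln 2)|\ln\rho|$, for all $t \ge 2s/|\ln\rho|$ and uniformly in $\alpha,\beta\in(0,\tfrac12]$. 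Such a uniform geometric tail bound gives the desired limit via the layer-cake estimate $\EE_{\pi_2}[X_{\alpha,\beta}\,\1_{X_{\alpha,\beta}\ge K}] \le K\,\PP_{\pi_2}(X_{\alpha,\beta}\ge K) + \int_K^\infty \PP_{\pi_2}(X_{\alpha,\beta}\ge t)\,\dd t \to 0$ as $K\to\infty$, uniformly in $\alpha,\beta$.

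It remains to prove exponential decay of $\EE_{\pi_2}[L_n^s]$. First I would record that $L_n$ is a $\PP_{\pi_2}$-martingale: since $\sum_{a}\Psi(a)$ is stochastic, $\EE_{\pi_2}[L_{n+1}\mid\mathcal F_n] = \sum_{a}\frac{\|\pi_2\Psi(w_n a)\|}{\|\pi_2\Psi(w_n)\|}\cdot\frac{\|\pi_1\Psi(w_n a)\|}{\|\pi_2\Psi(w_n a)\|} = \frac{\|\pi_1\Psi(w_n)\|}{\|\pi_2\Psi(w_n)\|} = L_n$, so by concavity of $t\mapsto t^s$ the process $L_n^s$ is a supermartingale and $\EE_{\pi_2}[L_n^s]$ is nonincreasing. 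Weighting words by their $\PP_{\pi_2}$-probability gives the R\'enyi-type expression $\EE_{\pi_2}[L_n^s] = \sum_{w\in\Sigma^n} \|\pi_1\Psi(w)\|^{s}\,\|\pi_2\Psi(w)\|^{1-s}$. I would then show that $\kappa(s) := \lim_{n\to\infty}\tfrac1n\ln\EE_{\pi_2}[L_n^s]$ exists (a pressure/scaled-cumulant exponent), using the approximate submultiplicativity of these H\"older sums together with the irreducible cross-product structure from \cref{sec:rep}. Since $\EE_{\pi_2}[L_n^0]=\EE_{\pi_2}[L_n^1]=1$, one has $\kappa(0)=\kappa(1)=0$, while $\tfrac1n\tfrac{\dd}{\dd s}\big|_{s=0^+}\ln\EE_{\pi_2}[L_n^s] = \tfrac1n\EE_{\pi_2}[\ln L_n]\to \ell<0$; convexity of $\kappa$ then forces $\kappa(s)<0$ for all $s\in(0,1)$, which is exactly the required decay.

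The main obstacle is this last step: existence and strict negativity of $\kappa(s)$. Negativity is precisely where distinguishability of $\pi_1,\pi_2$ enters, and it is genuinely needed — if $\PP_{\pi_2}(E_0)>0$ then $\EE_{\pi_2}[L_n^s]\to\EE_{\pi_2}[(\lim_n L_n)^s\,\1_{E_0}]>0$ and the family is \emph{not} uniformly integrable, so the statement can only hold once the $E_0$-mass is excluded; in the intended application the lemma is used after conditioning on $E_\ell$ with $\ell\in(-\infty,0)$, which places us in the distinguishable regime. Making $\kappa(s)$ rigorous requires controlling the normalization of $\pi_1\Psi(w_n)$ and $\pi_2\Psi(w_n)$ along the run — this is where the unboundedness of the increments $\ln L_{n+1}-\ln L_n$ bites — and I would handle it by decomposing according to which bottom SCC of the belief chain $\B$ the run enters (\cref{lem:expoprop}) and working inside a single right-bottom SCC, whose strong connectivity supplies both the submultiplicativity and the drift identity $\tfrac1n\EE_{\pi_2}[\ln L_n]\to\ell$. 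The remaining contributions are benign: on $E_{-\infty}$ one has $N_{\alpha,\beta}\le N_\perp$, hence $X_{\alpha,\beta}\le N_\perp/\ln 2$ is dominated by a single integrable variable, so that part of the family is trivially uniformly integrable.
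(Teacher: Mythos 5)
Your outer skeleton matches the paper's: both arguments reduce the lemma to the deterministic inclusion $\{N_{\alpha,\beta} > n\} \subseteq \{L_n \ge \alpha\}$ (up to the harmless factor $1-\beta$) together with a uniform-in-$(\alpha,\beta)$ exponential tail bound for $\PP_{\pi_2}(L_n \ge \alpha)$, which is then summed over $n \ge m_\alpha \approx \frac{-M\ln\alpha}{2|Q|}$. The difference is where that exponential bound comes from. The paper imports it wholesale as \cref{2016profilethm} (Lemma~9 of \cite{kief16}), which gives $\PP_{\pi_2}\big(L_{2|Q|n} \ge \e^{-c^2n/36}\big) \le 4\e^{-c^2n/36}$, and the remainder is bookkeeping. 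You propose to derive it from scratch via the R\'enyi-type exponent $\kappa(s) = \lim_n \frac1n \ln \EE_{\pi_2}[L_n^s]$, and that is exactly where your proof is incomplete. The existence of $\kappa(s)$ is not established: the H\"older sums $\sum_{w}\|\pi_1\Psi(w)\|^s\|\pi_2\Psi(w)\|^{1-s}$ are not submultiplicative as written, since after splitting $w=uv$ the inner sum depends on the normalised posteriors reached after~$u$, so you need a bound uniform over all reachable posterior pairs (with the correct supports). The strict negativity is also not established, and your endpoint computations are off: $L_n$ is in general only a $\PP_{\pi_2}$-supermartingale (mass of $\pi_1$ escapes on letters that $\pi_2$ cannot emit), so $\kappa(1)\le 0$ rather than $\kappa(1)=0$; likewise $\lim_{s\to 0^+}\EE_{\pi_2}[L_n^s]=\PP_{\pi_2}(L_n>0)$ need not be~$1$; and the identity $\frac1n\EE_{\pi_2}[\ln L_n]\to\ell$ presupposes a single likelihood exponent, integrability of $\ln L_n$ (which can fail, as $|{\ln L_{n+1}-\ln L_n}|$ is unbounded), and an interchange of limit and derivative. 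These are precisely the hard analytic facts; everything else in the write-up is routine. Since the entire content of the lemma is this concentration estimate, the proposal as it stands has a genuine gap.

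Two of your side remarks are nonetheless valuable. First, you correctly observe that the statement is false whenever $\PP_{\pi_2}(E_0)>0$, so distinguishability of $\pi_1,\pi_2$ (equivalently $0\notin\Lambda_{\pi_1,\pi_2}$ by \cref{cor:probexp0}) is an implicit hypothesis; the paper inherits it silently through the hypotheses of the cited concentration lemma, and your plan to localise to a right-bottom SCC via \cref{lem:expoprop} is the right way to make the application to \cref{asymptoticwald}.1 rigorous. Second, your moment-based route is close in spirit to how such concentration bounds for HMMs are actually obtained, so the economical repair is simply to invoke \cref{2016profilethm} as the paper does, rather than rebuild the bound from first principles.
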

We use the following technical lemma which is Lemma 9 from \cite{kief16}.

\begin{lemma}\label{2016profilethm}
There is a number $c > 0$, computable in polynomial time, such that
\begin{equation*}
\PP_{\pi_2}\Big( L_{2|Q|n} \geq \exp( -\frac{c^2}{36} n ) \Big) \leq 4 \exp\Big( -\frac{c^2}{36} n \Big).
\end{equation*}
\end{lemma}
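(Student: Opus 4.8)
The plan is to prove the bound as an \emph{exponential-affinity-decay plus Markov} estimate, working with $\sqrt{L_n}$ rather than with $L_n$ itself. As a standing assumption---inherited from the setting of \cref{kief16thm5}---I take $\pi_1$ and $\pi_2$ to be distinguishable, since otherwise $L_n$ can remain bounded away from~$0$ on a set of positive probability and no such bound can hold.

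The central quantity is the Bhattacharyya affinity
\[
\rho_n \ := \ \EE_{\pi_2}\big[\sqrt{L_n}\,\big] \ = \ \sum_{w\in\Sigma^n}\sqrt{\|\pi_1\Psi(w)\|\,\|\pi_2\Psi(w)\|}\,,
\]
where the second equality uses exactly the rewriting from the proof of \cref{convergenceLn} (with $\sqrt{L_n}\,\|\pi_2\Psi(w)\|$ in place of $\min\{L_n,1\}\,\|\pi_2\Psi(w)\|$). Granting that $\rho_{2|Q|n}$ decays like $\e^{-\Theta(c^2)n}$, the lemma follows at once by Markov's inequality applied to the nonnegative random variable $\sqrt{L_{2|Q|n}}$: taking square roots inside the event,
\[
\PP_{\pi_2}\Big(L_{2|Q|n}\ge \e^{-\frac{c^2}{36}n}\Big)
\ =\ \PP_{\pi_2}\Big(\sqrt{L_{2|Q|n}}\ge \e^{-\frac{c^2}{72}n}\Big)
\ \le\ \e^{\frac{c^2}{72}n}\,\rho_{2|Q|n}\,,
\]
so it remains only to make the affinity decay fast enough that this product is at most $4\,\e^{-\frac{c^2}{36}n}$. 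The specific constants $36$ and $4$ are bookkeeping: one splits the exponent of the affinity decay, spending part on the Markov threshold and keeping the rest as slack, with the factor $4$ absorbing lower-order corrections.

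To obtain the decay of $\rho$ I would establish a \textbf{uniform one-block contraction}. First I would invoke the structural theory of distinguishability from~\cite{kief14,kief16}: because $\pi_1,\pi_2$ are distinguishable, there are a block length $2|Q|$ and a constant $c>0$, computable in polynomial time, such that from \emph{every} reachable configuration the length-$2|Q|$ observation distributions have total variation distance at least~$c$; equivalently, the overlap $\sum_{v\in\Sigma^{2|Q|}}\min\{\|\mu_1\Psi(v)\|,\|\mu_2\Psi(v)\|\}$ is at most $1-c$ for every pair $(\mu_1,\mu_2)$ of belief states arising along a run. A standard inequality relating total variation to the Bhattacharyya coefficient then upgrades this total-variation gap into a multiplicative gap: the block affinity is at most $1-\Theta(c^2)\le \e^{-\Theta(c^2)}$. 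Conditioning block by block and using the tower property, these per-block factors multiply, giving $\rho_{2|Q|n}\le \e^{-\Theta(c^2)n}$; choosing the hidden constant appropriately yields exactly the exponent required above.

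The main obstacle is this uniform one-block contraction, i.e.\ the first sentence of the previous paragraph. The difficulty is twofold: the belief states $(\mu_1,\mu_2)$ range over infinitely many distributions, so the gap must be made uniform over all of them rather than merely positive for each, and the witness length must be bounded by $2|Q|$. Both are precisely the combinatorial content of the distinguishability analysis of~\cite{kief14,kief16}: one passes to the finite determinized product automaton, argues by a reachability/pumping argument that a distinguishing continuation occurs within $2|Q|$ steps, and lower-bounds the resulting gap uniformly in terms of the smallest nonzero transition probability raised to a bounded power---which is where the polynomial-time computability of~$c$ enters. Once this uniform gap is in hand, the remaining ingredients---the affinity rewriting, the total-variation-to-Bhattacharyya inequality, the tower-property multiplication, and the final Markov estimate---are routine.
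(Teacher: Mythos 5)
First, a point of comparison that matters: this paper contains no proof of the statement at all. \Cref{2016profilethm} is imported verbatim as Lemma~9 of \cite{kief16}, with the distinguishability hypothesis (which you correctly identify as implicit, and without which the bound is false, e.g.\ for $\pi_1=\pi_2$) inherited from that source. So your proposal can only be measured against the cited work, not against an in-paper argument. Your surrounding machinery is correct and the bookkeeping checks out with room to spare: the identity $\EE_{\pi_2}[\sqrt{L_n}]=\sum_{w\in\Sigma^n}\sqrt{\|\pi_1\Psi(w)\|\,\|\pi_2\Psi(w)\|}$ holds (words with $\|\pi_2\Psi(w)\|=0$ contribute zero); the block factorization is legitimate because $\|\pi_i\Psi(wv)\|=\|\pi_i\Psi(w)\|\cdot\|\mu_i^w\Psi(v)\|$ for the normalized beliefs $\mu_i^w$, so by the tower property $\rho_{m+2|Q|}\le \sup_{(\mu_1,\mu_2)} \mathit{BC}_{\mathrm{block}}(\mu_1,\mu_2)\cdot\rho_m$; the Cauchy--Schwarz inequality gives $\mathit{BC}\le\sqrt{(1-d)(1+d)}\le \e^{-c^2/2}$ from a total-variation gap $d\ge c$; and then $\PP_{\pi_2}\big(L_{2|Q|n}\ge \e^{-c^2n/36}\big)\le \e^{c^2n/72}\,\e^{-c^2n/2}=\e^{-35c^2n/72}\le 4\e^{-c^2n/36}$, so your ``bookkeeping'' remark is right---the constants are not tight.

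The genuine gap is the one you yourself flag: the uniform one-block contraction is not an ingredient you can borrow and then cite the rest as ``routine,'' because it \emph{is} the entire nontrivial content of the cited lemma. Two sub-obligations remain open in your sketch. First, that the belief pairs $(\mu_1^w,\mu_2^w)$ arising along $\PP_{\pi_2}$-positive prefixes remain distinguishable: this is true (conditioning both measures on a cylinder of positive probability preserves mutual singularity, and distinguishability depends only on the pair of supports by \cite{kief14}), but it needs saying, since your contraction quantifies over all reachable configurations. Second, and harder, the uniformity: distinguishability gives, for each individual belief pair, \emph{some} word with a probability gap, but the set of beliefs with fixed supports is not compact (supports degenerate on its boundary), so a pointwise positive gap does not yield a uniform $c>0$, let alone one computable in polynomial time together with the $2|Q|$ window length; this requires the quantitative structure theory of \cite{kief16} (bounding a distinguishing window via the product construction and lower-bounding the gap by powers of the minimum transition probability), which you gesture at but do not reproduce. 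In fairness, the paper under review defers the whole lemma to \cite{kief16} in exactly the same way, so your proposal is a sound \emph{reduction} to the cited literature with correct probabilistic scaffolding on top---but as a self-contained proof it is incomplete precisely at its announced crux.
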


\begin{proof}[Proof of \Cref{unifintegofN}]
By \Cref{liexpmotivation}, conditioned on $E_\ell$ we have $\lim_{\alpha, \beta \rightarrow 0} \frac{N_{\alpha, \beta}}{\ln \alpha}$ exists $\PP_{\pi_2}$-almost surely. Hence, the convergence is also in $\PP_{\pi_2}$-measure. Therefore, by the Vitali convergence theorem \cite{bog2007} it is sufficient to show that the set of random variables $\{ \frac{N_{\alpha, \beta}}{\ln \alpha} \mid \alpha, \beta \in (0,\frac12) \}$ is uniformly integrable conditioned on $E_\ell$. In fact, because
\begin{equation}\label{unifintcond}
\lim_{K \rightarrow \infty} \sup_{\alpha, \beta} \EE_{\pi_2} \big[ \frac{N_{\alpha, \beta}}{- \ln \alpha}\1_{\frac{N_{\alpha, \beta}}{-\ln \alpha} \geq - K}\big] \geq \PP_{\pi_2}(E_\ell) \lim_{M \rightarrow \infty} \sup_{\alpha, \beta} \frac{1}{- \ln \alpha}\EE_{\pi_2} \big[ \frac{N_{\alpha, \beta}}{- \ln \alpha}\1_{\frac{N_{\alpha, \beta}}{-\ln \alpha} \geq - K} \mid E_\ell \big],
\end{equation}
It is sufficient to check the uniform integrability condition without conditioning on $E_\ell$.

For fixed $M \geq \frac{144|Q|}{c^2}$, write $m_\alpha = \floor{\frac{- M \ln \alpha}{2|Q|}}$. It follows that 
\begin{equation*}
\frac{2|Q|m_\alpha}{\ln \alpha} \leq M \text{ and } \alpha \geq \exp{-\frac{c^2}{36} m_\alpha}.
\end{equation*}
Further, $m_\alpha \geq \frac{M \ln 2}{2|Q|} - 1$. The following holds
\begin{align*}
& \quad \EE_{\pi_2} \big[ \frac{N_{\alpha, \beta}}{- \ln \alpha} \1_{\frac{N_{\alpha, \beta}}{-\ln \alpha} \geq M}\big]\\
& = \frac{1}{- \ln \alpha} \sum_{n = 0}^\infty \PP_{\pi_2} \big( N_{\alpha, \beta} \1_{N_{\alpha, \beta} \geq 2|Q|m_\alpha}> n\big) \\
& \leq \frac{2|Q|}{- \ln \alpha}\Big( m_\alpha ~\PP_{\pi_2}(N_{\alpha, \beta} \geq 2|Q|m_\alpha) + \sum_{n = m_\alpha}^\infty \PP_{\pi_2}\big( N_{\alpha, \beta} \geq 2|Q|n\big)\Big)\\
& \leq M \PP_{\pi_2}\big( L_{2|Q|m_{\alpha}} \geq \alpha\big) + \frac{2|Q|}{- \ln \alpha} \sum_{n = m_\alpha}^\infty \PP_{\pi_2}\big( L_{2|Q|n} \geq \alpha\big) \\
& \leq M \PP_{\pi_2}\big( L_{2|Q|m_{\alpha}} \geq \exp{-\frac{c^2}{36} m_\alpha}\big) + \frac{2|Q|}{- \ln \alpha} \sum_{n = m_\alpha}^\infty \PP_{\pi_2}\big( L_{2|Q|n} \geq \exp{-\frac{c^2}{36} n}\big) \\
& \leq 4M \exp{-\frac{c^2}{36}m_\alpha}  + \frac{8|Q|}{- \ln \alpha} \sum_{n = m_\alpha}^\infty \exp{-\frac{c^2}{36} n}\\
& \leq 4M \exp{-\frac{c^2}{36}m_\alpha}  + \frac{8|Q|\exp{-\frac{c^2}{36} m_\alpha}}{- \ln \alpha} \frac{1}{1 - \exp{c^2 / 36}}\\
& \leq 4M \exp{-\frac{c^2}{36} \Big(\frac{M \ln 2}{2|Q|} - 1 \Big)}  + \frac{8|Q|\exp \Big( -\frac{c^2}{36} (\frac{M \ln 2}{2|Q|} - 1 ) \Big) }{\ln 2} \frac{1}{1 - \exp{c^2 / 36}}\\
& \rightarrow 0
\end{align*}
as $M \rightarrow \infty$ where the fourth inequality follows by \Cref{2016profilethm}. Hence, \Cref{unifintcond} must hold.
\end{proof}

\liexpmotivation*

\begin{proof}
Since $\Psimin^n \leq L_n \leq \Psimin^{-n}$ it follows that
\begin{equation*}
N_{\alpha,\beta} \geq \frac{ \min \{ \ln \frac{\alpha}{1 - \beta}, \ln \frac{\beta}{1 - \alpha} \} }{\ln \Psimin}
\end{equation*}
Hence $N_{\alpha, \beta} \rightarrow \infty \ \PP_{\pi_2}$-almost surely as $\alpha, \beta \rightarrow 0$. Consider the case $\ell_k \in (-\infty, 0)$. Let $U_{\alpha,\beta} = \{w \in \Sigma^\omega \mid \ln L_{N_\alpha} \leq \ln \frac{ \alpha}{1 - \beta} \}$. The set $\bigcap_{\alpha, \beta \in (0,1]} U_{\alpha, \beta}^c \subseteq \{L_n \text{ is unbounded}\}$. Hence, $\lim_{\alpha, \beta \rightarrow 0}\1_{U_{\alpha,\beta}} = 1 \ ~\PP_{\pi_2}$-almost surely. Conditioned on $E_\ell$ it follows that

\begin{equation*}
0 \leq \1_{U_{\alpha, \beta}} \frac{\ln \frac{\alpha}{1 - \beta} - \ln L_{N_{\alpha, \beta}}}{N_\alpha} \leq \1_{U_{\alpha, \beta}}  \frac{\ln L_{N_{\alpha, \beta} - 1}  - \ln L_{N_{\alpha, \beta}}}{N_{\alpha, \beta}}\rightarrow 0 \ \text{ as } \alpha \rightarrow 0.
\end{equation*}
And so
\begin{equation*}
\lim_{\alpha, \beta \rightarrow 0}\frac{\ln \alpha}{N_{\alpha, \beta}} = \lim_{\alpha \rightarrow 0}\frac{\ln \frac{\alpha}{1-\beta}}{N_{\alpha, \beta}} =\lim_{\alpha \rightarrow 0}\frac{\ln L_{N_{\alpha, \beta}}}{N_{\alpha, \beta}} =\ell_k.
\end{equation*}
\end{proof}

\section{Proofs from \cref{sec:qual}} \label{app:qual}

\subsection{Proof of \cref{lem:expoprop}}

\lemexpoprop*
\begin{proof}
\begin{enumerate}
\item
Let $C \subseteq 2^Q \times Q$ be a bottom SCC of~$\B$.
Let $\pi, \pi'$ be distributions on~$Q$ and $q, q' \in Q$ such that $(\supp(\pi),q), (\supp(\pi'),q') \in C$.
Suppose that $\ell \in \Lambda_{\pi,e_{q}}$; i.e.,
\begin{equation} \label{eq:expoprop-ass}
 \PP_{e_{q}}\left(\lim_{n \to \infty} \frac1n \ln \frac{\| \pi \Psi(w_n) \|}{\| e_q \Psi(w_n) \|} = \ell\right) \ = \ x \quad \text{for some } x>0.
\end{equation}
It suffices to show that $\Lambda_{\pi',e_{q'}} = \{\ell\}$, i.e.,
\[
 \PP_{e_{q'}}\left(\lim_{n \to \infty} \frac1n \ln \frac{\| \pi' \Psi(w_n) \|}{\| e_{q'} \Psi(w_n) \|} = \ell\right) \ = \ 1.
\]
By L\'evy's 0-1 law it suffices to show that for all paths $q' a_1 q_1 \cdots a_m q_m$ with $\PP_{e_{q'}}(q' a_1 q_1 \cdots a_m q_m(\Sigma Q)^\omega) > 0$ there is $y>0$ with
\begin{equation} \label{eq:expoprop-goal}
 \PP_{e_{q'}}\left(\lim_{n \to \infty} \frac1n \ln \frac{\| \pi' \Psi(w_n) \|}{\| e_{q'} \Psi(w_n) \|} = \ell \;\middle\vert\; q' a_1 q_1 \cdots a_m q_m (\Sigma Q)^\omega \right) \ \ge \ y.
\end{equation}

Let $u = q' a_1 q_1 \cdots a_m q_m$ be a path with $\PP_{e_{q'}}(u (\Sigma Q)^\omega) > 0$.
Since $C$ is a bottom SCC of~$\B$, we have
\[
\PP_{e_{q'}}\left( \exists\,k \ge m : \supp(\pi' \Psi(a_1 \cdots a_m \cdots a_k)) = \supp(\pi),\ q_k = q \;\middle\vert\; u (\Sigma Q)^\omega) \right)=1\,.
\]
Thus, letting $v = q' a_1 q_1 \cdots a_m q_m \cdots a_k q_k$, with $k \ge m$, be an arbitrary extension of~$u$ with $\PP_{e_{q'}}(v (\Sigma Q)^\omega) > 0$ and $\supp(\pi' \Psi(a_1 \cdots a_m \cdots a_k)) = \supp(\pi)$ and $q_k = q$, we have
\begin{align}
 & \PP_{e_{q'}}\left(\lim_{n \to \infty} \frac1n \ln \frac{\| \pi' \Psi(w_n) \|}{\| e_{q'} \Psi(w_n) \|} = \ell \;\middle\vert\; u (\Sigma Q)^\omega \right) \notag\\
 \ge\ & \PP_{e_{q'}}\left(\lim_{n \to \infty} \frac1n \ln \frac{\| \pi' \Psi(w_n) \|}{\| e_{q'} \Psi(w_n) \|} = \ell \;\middle\vert\; v (\Sigma Q)^\omega \right) \notag\\
 \ge\ & \PP_{e_q}\left( \lim_{n \to \infty} \frac1n \ln \frac{\| (\pi' \Psi(a_1 \cdots a_k)) \Psi(w_n) \|}{ \| (e_{q'} \Psi(a_1 \cdots a_k)) \Psi(w_n) \|} = \ell \right) \notag\\
 \ge\ & \PP_{e_q}\left( \lim_{n \to \infty} \frac1n \ln \frac{\| (\pi' \Psi(a_1 \cdots a_k)) \Psi(w_n) \|}{ \| e_{q} \Psi(w_n) \|} = \ell \ \text{ and } \right. \label{eq:first-event} \\
      & \qquad\ \left. \lim_{n \to \infty} \frac1n \ln \frac{\| (e_{q'} \Psi(a_1 \cdots a_k)) \Psi(w_n) \|}{ \| e_{q} \Psi(w_n) \|} = 0 \right) \label{eq:second-event}
\end{align}
Concerning the event in~\eqref{eq:first-event}, by \eqref{eq:expoprop-ass} and since $\supp(\pi' \Psi(a_1 \cdots a_k)) = \supp(\pi)$, we have
\[
\PP_{e_q}\left( \lim_{n \to \infty} \frac1n \ln \frac{\| (\pi' \Psi(a_1 \cdots a_k)) \Psi(w_n) \|}{ \| e_{q} \Psi(w_n) \|} = \ell \right)
\ \ge\ x\,.
\]
Concerning the event in~\eqref{eq:second-event}, it follows from \cref{convergenceLn}.1 \stefan{better reference?} that
\[
\PP_{e_q}\left( \lim_{n \to \infty} \frac1n \ln \frac{\| (e_{q'} \Psi(a_1 \cdots a_k)) \Psi(w_n) \|}{ \| e_{q} \Psi(w_n) \|} \le 0 \right) \ = \ 1\,.
\]
Further, since $\PP_{e_{q'}}(q' a_1 q_1 \cdots a_k q_k (\Sigma Q)^\omega) > 0$, we have $q \in \supp(e_{q'} \Psi(a_1 \cdots a_k))$ and so
\[
\PP_{e_q}\left( \lim_{n \to \infty} \frac1n \ln \frac{\| (e_{q'} \Psi(a_1 \cdots a_k)) \Psi(w_n) \|}{ \| e_{q} \Psi(w_n) \|} \ge 0 \right) \ = \ 1\,.
\]
Thus, continuing the inequality chain from above, we conclude that
\[ \PP_{e_{q'}}\left(\lim_{n \to \infty} \frac1n \ln \frac{\| \pi' \Psi(w_n) \|}{\| e_{q'} \Psi(w_n) \|} = \ell \;\middle\vert\; u (\Sigma Q)^\omega \right) \ \ge\ x\,,
\]
proving~\eqref{eq:expoprop-goal}, as desired.
\item
Let $(S,q) \in C$ for a bottom SCC~$C$.
If $S = \emptyset$ then we may define $\ell(C) = -\infty$.
Otherwise, let $\pi_S$ denote the uniform distribution on~$S$.
Suppose that $\pi_S$ and~$e_q$ are not distinguishable.
By \cref{cor:probexp0} it follows that $0 \in \Lambda_{\pi_S,e_q}$.
Using part 1 we obtain $\ell(C) = 0$.
Finally, suppose that $\pi_S$ and~$e_q$ are distinguishable.
By \cref{cor:probexp0} it follows that $0 \not\in \Lambda_{\pi_S,e_q}$.
Since $C$ does not contain any states of the form $(\emptyset,q')$, by \cref{prop:neginf} we have $-\infty \not\in \Lambda_{\pi_S,e_q}$.
Using part~1 we obtain $\ell(C) \in (-\infty,0)$.
\item
We define a function~$f$ that maps paths of~$\H$ to paths of~$\B$ as follows.
Set $f(q_0 a_1 q_2 \cdots a_m q_m) := (S_0,q_0) (S_1,q_1) \cdots (S_m,q_m)$ where $S_0 = \supp(\pi_1)$ and $\delta(S_{i-1},a_i) = S_i$ for all $1 \le i \le m$.
The Markov chain~$\B$ is constructed so that for any path $v = (S_0,q_0) (S_1,q_1) \cdots (S_m,q_m)$ we have
\[
 \PP_\iota( v (2^Q \times Q)^\omega) \ = \ \PP_{\pi_2}(f^{-1}(v)(\Sigma Q)^\omega)\,.
\]
Let $C$ be any bottom SCC, and let $\ell = \ell(C)$.
Define the event
\[
 V_C := \{q_0 a_1 q_1 \cdots \in Q(\Sigma Q)^\omega \mid \exists\,m \in \NN : f(q_0 a_1 q_1 \cdots a_m q_m) \text{ ends in~$C$} \}\,.
\]
So we have $\PP_{\iota}(\{\text{visit $C$}\}) = \PP_{\pi_2}(V_C)$, and it suffices to show that $\PP_{\pi_2}(E_\ell \mid V_C) = 1$.
Let $u = q_0 a_1 q_1 \cdots a_m q_m$ be a path with $\PP_{\pi_2}(u (\Sigma Q)^\omega) > 0$ such that $f(u)$ ends in~$C$, say in $(S,q) \in C$, with $q = q_m$.
Thus, $\supp(\pi_1 \Psi(a_1 \cdots a_m)) = S$ and $q \in \supp(\pi_2 \Psi(a_1 \cdots a_m))$.
It suffices to show that $\PP_{\pi_2}(E_\ell \mid u (\Sigma Q)^\omega) = 1$.
We have:
\begin{align}
    & \PP_{\pi_2}(E_\ell \mid u (\Sigma Q)^\omega) \notag \\
 =\ & \PP_{\pi_2}\left(\lim_{n \to \infty} \frac1n \ln \frac{\| \pi_1 \Psi(w_n) \|}{\| \pi_2 \Psi(w_n) \|} = \ell \;\middle\vert\; u (\Sigma Q)^\omega \right) \notag \\
 =\ & \PP_{e_q}\left( \lim_{n \to \infty} \frac1n \ln \frac{\| (\pi_1 \Psi(a_1 \cdots a_m)) \Psi(w_n) \|}{ \| (\pi_2 \Psi(a_1 \cdots a_m)) \Psi(w_n) \|} = \ell \right) \notag \\
 \ge\ & \PP_{e_q}\left( \lim_{n \to \infty} \frac1n \ln \frac{\| (\pi_1 \Psi(a_1 \cdots a_m)) \Psi(w_n) \|}{ \| e_{q} \Psi(w_n) \|} = \ell \ \text{ and } \right. \label{eq:f-e}\\
      & \qquad\ \left. \lim_{n \to \infty} \frac1n \ln \frac{\| (\pi_2 \Psi(a_1 \cdots a_m)) \Psi(w_n) \|}{ \| e_{q} \Psi(w_n) \|} = 0 \right) \label{eq:s-e}
\end{align}
Concerning the event in~\eqref{eq:f-e}, by part~2 and since $\supp(\pi_1 \Psi(a_1 \cdots a_m)) = S$, we have
\[
\PP_{e_q}\left( \lim_{n \to \infty} \frac1n \ln \frac{\| (\pi_1 \Psi(a_1 \cdots a_m)) \Psi(w_n) \|}{ \| e_{q} \Psi(w_n) \|} = \ell \right) \ = \ 1\,.
\]
Concerning the event in~\eqref{eq:s-e}, it follows from \cref{convergenceLn}.1 \stefan{better reference?} that
\[
\PP_{e_q}\left( \lim_{n \to \infty} \frac1n \ln \frac{\| (\pi_2 \Psi(a_1 \cdots a_m)) \Psi(w_n) \|}{ \| e_{q} \Psi(w_n) \|} \le 0 \right) \ = \ 1\,.
\]
Further, since $q \in \supp(\pi_2 \Psi(a_1 \cdots a_m))$, we have
\[
\PP_{e_q}\left( \lim_{n \to \infty} \frac1n \ln \frac{\| (\pi_2 \Psi(a_1 \cdots a_m)) \Psi(w_n) \|}{ \| e_{q} \Psi(w_n) \|} \ge 0 \right) \ = \ 1\,.
\]
Thus, the events in \eqref{eq:f-e} and~\eqref{eq:s-e} occur $\PP_{e_q}$-a.s.
We conclude that $\PP_{\pi_2}(E_\ell \mid u (\Sigma Q)^\omega) = 1$, as desired. \qedhere
\end{enumerate}
\end{proof}

We can finally prove \Cref{asymptoticwald}. We use the fact that conditional expected time of visiting a state in a Markov chain is finite. This follows directly from the main result of \cite{shes13}.

\begin{proof}[Proof of \Cref{asymptoticwald}]
The first point follows by \Cref{unifintegofN} and \Cref{liexpmotivation} using Vitali's convergence theorem. The second point follows from \Cref{probexp0}. Finally, by \Cref{prop:neginf} we have $\sup_{\alpha, \beta}\EE_{\pi_2}[N_{\alpha, \beta} \mid E_{-\infty}] \leq \EE_{\pi_2}[N_\perp \mid E_{-\infty}] < \infty$ since by \Cref{lem:expoprop} $L_n = 0$ if and only if we visit a bottom SCC $C$ such that  $(\emptyset, q) \in C$ for some $q \in Q$. 
\end{proof}

\subsection{Proof of \cref{thm:qual-prob}} \label{app:thm-qual-prob}

Below we refer to the complexity class NC, the subclass of P comprising those problems solvable in polylogarithmic time by a parallel random-access machine using polynomially many processors; see, e.g., \cite[Chapter 15]{Pap94}.
To prove membership in PSPACE in a modular way, we use the following pattern:
\begin{lemma} \label{lem:PSPACE-transducer}
Let $P_1, P_2$ be two problems, where $P_2$ is in NC.
Suppose there is a reduction from $P_1$ to~$P_2$ implemented by a PSPACE transducer, i.e., a Turing machine whose work tape (but not necessarily its output tape) is PSPACE-bounded.
Then $P_1$ is in PSPACE.
\end{lemma}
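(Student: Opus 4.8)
**The plan is to compose two space-bounded machines, handling the subtlety that the intermediate output (the input to $P_2$) may be exponentially large.**

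The naive idea—run the $P_1$-to-$P_2$ transducer, store its output, then run the $P_2$ algorithm—fails because the transducer's output tape is not PSPACE-bounded, so the intermediate string may be of exponential length and cannot be stored in polynomial space. The standard way around this is to avoid ever materialising the intermediate output in full. First I would observe that since $P_2 \in \mathrm{NC} \subseteq \mathrm{polyL}$ (polylogarithmic space in the size of \emph{its} input), the $P_2$-solver uses space polylogarithmic in the length of the intermediate output. If that output has length at most $2^{p(n)}$ for a polynomial $p$ (which holds because a PSPACE transducer runs in at most exponential time and hence writes at most exponentially many symbols), then polylogarithmic space in the output length is polynomial space in~$n$. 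So the $P_2$-solver, \emph{if it could access its input}, would fit within our space budget.

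The key step is therefore to simulate the $P_2$-solver on the intermediate output \emph{without storing that output}, by recomputing any requested bit on demand. Concretely, I would run the $P_2$-solver, and whenever it queries the $i$-th symbol of its input tape, I would re-invoke the $P_1$-to-$P_2$ transducer from scratch, discarding all output symbols except the $i$-th, which is returned as the answer to the query. Each such recomputation uses only the transducer's PSPACE-bounded work tape plus a counter up to~$i$ (which is at most $2^{p(n)}$, so the counter needs only $p(n)$ bits); all of this is polynomial in~$n$. The $P_2$-solver's own work tape is polynomial in~$n$ as argued above. Running these two machines in this oracle-style composition keeps the total space polynomial, establishing $P_1 \in \mathrm{PSPACE}$.

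\textbf{The main obstacle} is getting the bookkeeping of the bounds right and stating the composition cleanly rather than any deep idea: one must verify that the length of the intermediate output is at most exponential in~$n$ (so that a pointer into it, and polylogarithmic space in its length, are both polynomial in~$n$), and that recomputing a single output bit costs only the transducer's work-tape space plus a counter. I would present this as the familiar ``lazy recomputation'' argument for composing space-bounded reductions, citing it as standard (cf.\ the treatment of logspace/PSPACE composition in~\cite{Pap94}); the proof is then a short verification that each component stays within a polynomial space bound, with no intricate calculation required.
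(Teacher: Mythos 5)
Your proposal is correct and follows essentially the same route as the paper: note that the intermediate output is at most exponential, use that NC is contained in polylogarithmic space (so the $P_2$-solver needs only polynomial space in the original input length), and compose the two machines via the standard lazy-recomputation technique for space-bounded transducers. The paper simply cites these facts (Borodin for NC in polylog space, Papadimitriou for the composition), whereas you spell out the recomputation argument explicitly; the content is the same.
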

\begin{proof}
Note that the output of the transducer is (at most) exponential.
Problems in NC can be decided in polylogarithmic space~\cite[Theorem~4]{Borodin77}.
Using standard techniques for composing space-bounded transducers (see, e.g., \cite[Proposition~8.2]{Pap94}), it follows that $P_1$ is in PSPACE.
\end{proof}

Now we prove the following theorem from the main body.
\thmqualprob*
\begin{proof}
\begin{enumerate}
\item
The Markov chain~$\B$ from \cref{lem:expoprop} is exponentially big but can be constructed by a PSPACE transducer, i.e., a Turing machine whose work tape (but not necessarily its output tape) is PSPACE-bounded.
The DAG (directed acyclic graph) structure, including the SCCs, of a graph can be computed in NL, which is included in NC.
Using the pattern of \cref{lem:PSPACE-transducer}, the DAG structure of the Markov chain~$\B$ can be computed in PSPACE.
Thus, there is a PSPACE transducer that computes both~$\B$ and its DAG structure.

For each bottom SCC~$C$, the PSPACE transducer also decides whether $\ell(C) = -\infty$ or $\ell(C) \in (-\infty,0)$ or $\ell(C) = 0$, using \cref{lem:expoprop}.2 and the polynomial-time algorithm for distinguishability from~\cite{kief14}.
Finally, to compute $\PP_{\pi_2}(E_{-\infty})$ and $\PP_{\pi_2}(E_0)$, by \cref{lem:expoprop}.3, it suffices to set up and solve a linear system of equations for computing hitting probabilities in a Markov chain.
This system can also be computed by a PSPACE transducer.
Linear systems of equations can be solved in NC~\cite[Theorem~5]{BorodinGathenHopcroft82}.
Using \cref{lem:PSPACE-transducer} again, we conclude that one can compute $\PP_{\pi_2}(E_{-\infty})$ and $\PP_{\pi_2}(E_0)$ in PSPACE.
\item This part was proved in the main body.
\item The claims concerning $\PP_{\pi_2}(E_{-\infty})$ follow from part~1 and \cref{prop:nontrivial-approx}.
Consider the problem whether $\PP_{\pi_2}(E_{0}) = 1$.
By part~1, it is in PSPACE.
Towards PSPACE-hardness we reduce again from mortality.
Let $(Q,\Sigma,\Phi)$ be an instance of the mortality problem.
Let $Q' := Q \cup \{q_\bot, q_2\}$ for fresh states $q_\bot,q_2$, and let $\Sigma' := \Sigma \cup \{\$\}$ for a fresh letter~$\$$.
Obtain $\Phi'$ from~$\Phi$ by adding, for every $q \in Q'$, a $\$$-labelled transition to~$q_\bot$, and an $a$-labelled loop from $q_2$ to itself for all $a \in \Sigma$.
Construct an HMM $(Q',\Sigma',\Psi)$ so that $\Phi'(a)$ and~$\Psi(a)$ have the same zero pattern for all $a \in \Sigma'$ (e.g., use uniform distributions).
See \cref{fig:PSPACE-hardness}.
\begin{figure}[ht]
\begin{center}
\begin{tikzpicture}[scale=2.5,LMC style]
\node[state] (q0) at (0,0) {$q_0$};
\node[state] (q1) at (1,0) {$q_1$};
\path[->] (q0) edge [loop,out=200,in=160,looseness=10] node[left] {$b$} (q0);
\path[->] (q0) edge node[above] {$a,b$} (q1);
\path[->] (q1) edge [loop,out=20,in=-20,looseness=10] node[right] {$a$} (q1);
\draw[->,line width=3] (1,-0.5) -- (1,-1);

\node[state] (q0') at (0,-1.5) {$q_0$};
\node[state] (q1') at (1,-1.5) {$q_1$};
\node[state] (q2) at (2,-1.5) {$q_2$};
\node[state] (qb) at (1,-2.5) {$q_\bot$};
\path[->] (q0') edge [loop,out=200,in=160,looseness=10] node[left] {$\frac14 b$} (q0');
\path[->] (q0') edge[bend left] node[above] {$\frac14 a$} (q1');
\path[->] (q0') edge[bend right] node[above] {$\frac14 b$} (q1');
\path[->] (q1') edge [loop,out=20,in=-20,looseness=10] node[right] {$\frac12 a$} (q1');
\path[->] (q0') edge node[left,xshift=-2] {$\frac14\$$} (qb);
\path[->] (q1') edge node[left] {$\frac12\$$} (qb);
\path[->] (q2) edge [loop,out=80,in=40,looseness=10] node[right] {$\frac13 a$} (q2);
\path[->] (q2) edge [loop,out=-40,in=-80,looseness=10] node[right] {$\frac13 b$} (q2);
\path[->] (q2) edge node[left] {$\frac13\$$} (qb);
\path[->] (qb) edge [loop,out=20,in=-20,looseness=10] node[right] {$1\$$} (qb);
\end{tikzpicture}
\end{center}
\caption{Illustration of the reduction from mortality to $\PP_{\pi_2}(E_0)<1$.
In this example, $\Phi(a b)$ is the zero matrix.
Accordingly, we have $\PP_{\pi_2}(E_0) < 1$, as $L_2(a b w) = 0$ for all $w \in \Sigma^\omega$.}
\label{fig:PSPACE-hardness}
\end{figure}
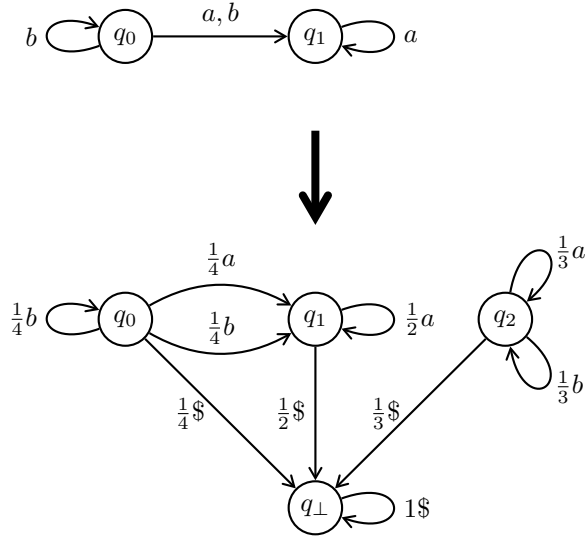
Let $\pi_1 \in [0,1]^{Q'}$ be the uniform distribution on~$Q$ (i.e., $(\pi_1)_{q_\bot} = (\pi_1)_{q_2} = 0$), and let $\pi_2$ be the Dirac distribution on~$q_2$.

Suppose $(Q,\Sigma,\Phi)$ is a positive instance of the mortality problem.
Let $v \in \Sigma^*$ such that $\Phi(v)$ is the zero matrix.
Then $L_{|v|}(v w) = 0$ holds for all $w \in \Sigma^\omega$.
It follows that $\PP_{\pi_2}(E_{-\infty})>0$ and so $\PP_{\pi_2}(E_0)<1$.

Conversely, suppose $(Q,\Sigma,\Phi)$ is a negative instance of the mortality problem.
The word produced from~$q_2$ contains $\PP_{e_{q_2}}$-a.s.\ the letter~$\$$, i.e., is of the form $u \$ v$ for $u \in \Sigma^*$ and $v \in (\Sigma \cup \{\$\})^\omega$.
Since $(Q,\Sigma,\Phi)$ is a negative instance, it follows that $\supp(\pi_1 \Psi(u \$)) = \{q_\bot\} = \supp(e_{q_\bot} \Psi(u \$))$.
Thus, $\lim_{n \to \infty} L_n > 0$. 
Hence, $\PP_{\pi_2}(E_0)=1$.
\qedhere
\end{enumerate}
\end{proof}

\section{Proofs from \Cref{sec:rep}}

For ease of reading, we repeat some definitions from \Cref{sec:rep}.

\generalizedlyapsysdef

\fromgentonongen*
The following is the key technical lemma that we use to prove \Cref{from-gen-to-nongen}. 

\begin{lemma}\label{makereduciblelyapsys}
	Let $\mathcal{H} = (Q, \Sigma, \Psi)$ be an HMM and define $\Psi'$ as in the main text such that $\mathcal{H}' = (Q, \Sigma Q, \Psi')$ is an HMM. One can compute in polynomial time a finite set $\Delta$, a probability distribution $\rho \in (0,1]^{\Delta}$ and for each $r_0 \in Q$ (a representation of) a mapping $\kappa_{r_0} : \Delta \rightarrow (\Sigma Q)$ with the following property. 
	
	Extend $\kappa_{r_0}$ to $(\Sigma Q)^+$ inductively. For each $c \in \Delta$ and $\overline{w} \in \Delta^+$ we let $\kappa_{r_0}(c~\overline{w}) = (a_1 r_1) \kappa_{r_1}(\overline{w})$ where $a_1 r_1 = \kappa_{r_0}(c)$. Then, for each word $w \in (\Sigma Q)^+$ we have
	\begin{equation}\label{markov2indepmeasures}
	\PP_{\rho}(\kappa_{r_0}^{-1}(\{w\}) \Delta^\omega) = \PP_{e_{r_0}}(w (\Sigma Q)^\omega)
	\end{equation}
	where $\PP_{e_{r_0}}$ refers to the measure on words produced by $\mathcal{H}'$ with initial distribution $e_{r_0}$.
\end{lemma}

\begin{proof}
	Let $\tau : [\Sigma Q] \rightarrow \Sigma Q$ be a bijection which we view as an arbitrary ordering on $\Sigma Q$ and write $\tau(k) = a_k q_k$. For each $r \in Q$ we define a function $\overline{\kappa}_r: [0,1) \rightarrow \Sigma Q$
	\begin{equation*}
	\overline{\kappa}_r(x) = a_K q_K \text{ when } \sum_{k = 1}^{K - 1} \Psi(a_k)_{r, q_k} \leq x < \sum_{k = 1}^K \Psi(a_k)_{r, q_k},
	\end{equation*}
	where for notational purposes, $\sum_{k = 1}^{0} \Psi(a_k)_{p, q_k} = 0$. For each $r \in Q$, since $\sum_{k = 1}^{|Q \times \Sigma|} \Psi(a_k)_{r, q_k} = 1$ it follows that $\overline{\kappa}_r$ is well defined on the interval $[0,1)$. Let $\Delta$ be the set of atomic elements of the finite $\sigma$-algebra $\sigma\{\overline{\kappa}_r^{-1}(\{a q\}) \mid r,q \in Q, a \in \Sigma\}$. The set $\Delta$ is a finite partition of $[0,1)$ and consists of intervals $[\alpha, \beta)$ where $\alpha, \beta \in [0,1]\cap\QQ$. We demonstrate the construction of $\Delta$ using the example HMM below.
	
	\begin{center}
		\begin{tikzpicture}[scale=2.5,LMC style]
		\node[state] (q1) at (-1,0) {$q_1$};
		\node[state] (q2) at (0,0) {$q_2$};
		\node[state] (q3) at (1,0) {$q_3$};
		
		\path[->] (q1) edge [loop,out=140,in=100,looseness=10] node[left] {$0.2 a$} (q1);
		\path[->] (q1) edge [loop,out=260,in=220,looseness=10] node[left] {$0.4 b$} (q1);
		\path[->] (q1) edge [bend left] node[above] {$0.4 b$} (q2);

		\path[->] (q2) edge [loop,out=110,in=70,looseness=10] node[above] {$0.4 a$} (q2);
		\path[->] (q2) edge [bend left] node[below] {$0.3 b$} (q1);
		\path[->] (q2) edge [bend left] node[above] {$0.3 b$} (q3);
		
		\path[->] (q3) edge [loop,out=80,in=40,looseness=10] node[right] {$0.6 a$} (q3);
		\path[->] (q3) edge [loop,out=-40,in=-80,looseness=10] node[right] {$0.2 b$} (q3);
		\path[->] (q3) edge [bend left] node[above] {$0.2 b$} (q2);
		\end{tikzpicture}
	\end{center}
	In the diagram below we give a representation of the functions $\overline{\kappa}_{q_1}, \overline{\kappa}_{q_2}, \overline{\kappa}_{q_3}$ and also the resulting set $\Delta$. The first three horizontal stacks of rectangles each represent a partition of the interval $[0,1)$ into the values taken by $\overline{\kappa}_{q_i}$ for $i = 1, 2, 3$. The bottom horizontal stack of rectangles represent the intervals in $\Delta$.
	\begin{center}
		\begin{tikzpicture}
		\draw[black, very thick] (0,-1.25) rectangle (2.2,-1.75);
		\draw[black, very thick] (2.2,-1.25) rectangle (4.4,-1.75);
		\draw[black, very thick] (4.4,-1.25) rectangle (6.6,-1.75);
		\draw[black, very thick] (6.6,-1.25) rectangle (7.7,-1.75);
		\draw[black, very thick] (7.7,-1.25) rectangle (8.8,-1.75);
		\draw[black, very thick] (8.8,-1.25) rectangle (11,-1.75);
		\node[] at (0,-1) {$0$};
		\node[] at (2.2,-1) {$0.2$};
		\node[] at (4.4,-1) {$0.4$};
		\node[] at (6.6,-1) {$0.6$};
		\node[] at (7.7,-1) {$0.7$};
		\node[] at (8.8,-1) {$0.8$};
		\node[] at (11,-1) {$1$};
		
		\node[] at (-0.5,-1.5) {$\Delta$};
		
		\draw [-latex](5.5,-0.25) -- (5.5,-1);
		
		\draw[black, very thick] (0,0) rectangle node{$a q_3$} (6.6,0.5);
		\draw[black, very thick] (6.6,0) rectangle node{$b q_3$} (8.8,0.5);
		\draw[black, very thick] (8.8,0) rectangle node{$b q_2$} (11,0.5);
		\node[] at (-0.5,0.25) {$\overline{\kappa}_{q_3}$};
		
		\draw[black, very thick] (0,0.75) rectangle node{$a q_2$} (4.4,1.25);
		\draw[black, very thick] (4.4,0.75) rectangle node{$b q_1$} (7.7,1.25);
		\draw[black, very thick] (7.7,0.75) rectangle node{$b q_3$} (11,1.25);
		\node[] at (-0.5,1) {$\overline{\kappa}_{q_2}$};
		
		\draw[black, very thick] (0,1.5) rectangle node{$a q_1$} (2.2,2);
		\draw[black, very thick] (2.2,1.5) rectangle node{$b q_1$} (6.6,2);
		\draw[black, very thick] (6.6,1.5) rectangle node{$b q_2$} (11,2);
		\node[] at (-0.5,1.75) {$\overline{\kappa}_{q_1}$};
		
		\draw (-0.2,-1.75) -- (-0.2,2);
		\end{tikzpicture}
		
	\end{center}
	
	One can compute in polynomial time the endpoints of all intervals $[\alpha, \beta) \in \Delta$. For any $a \in \Delta$ and $r \in Q$ the image $\overline{\kappa}_r(a)$ contains exactly one element. Therefore, for each $r \in Q$ we may define a function $\kappa_r: \Delta \rightarrow \Sigma Q$ such that $\kappa_r([\alpha, \beta)) = \overline{\kappa}_r(x)$ for all $x \in [\alpha, \beta)$. We define the probability distribution $\rho$ on $\Delta$ by $\rho_{[\alpha,\beta)} = \beta - \alpha$ for all $[\alpha,\beta) \in \Delta$. The distribution $\rho$ is computable in polynomial time.
	
	In our example above $\Delta = \{[0,0.2), [0.2,0.4), [0.4,0.6), [0.6,0.7), [0.7,0.8), [0.8,1)\}$ and $\kappa_{q_1}$ is defined piecewise by
	\begin{equation}
	\kappa_{q_1}(x) = \begin{cases}
	a q_1 & x = [0,0.2) \\
	b q_1 & x \in \{[0.2,0.4), [0.4,0.6)\} \\
	b q_2 & x \in \{[0.6,0.7), [0.7,0.8), [0.8, 1)\}. \\
	\end{cases} 
	\end{equation}

	Let $r_0 \in Q$. We may extend the mapping $\kappa_{r_0}$ to a word $c~\overline{w} \in \Delta^+$ inductively by letting $\kappa_{r_0}(c~\overline{w}) = (a_1 r_1) \kappa_{r_1}(\overline{w})$ where $(a_1 r_1) = \kappa_{r_0}(c)$. We now prove (\ref{markov2indepmeasures}) by induction on the length of the word. Let $a_1 r_1 \in \Sigma Q$ then by the definition of $\kappa_{r_0}$,
	\begin{equation*}
	\PP_{e_{r_0}}(a_1 r_1 (\Sigma Q)^\omega) = \Psi(a_1)_{r_0, r_1} = \rho\Big(\kappa_{r_0}^{-1}(\{a_1 r_1\})\Big) = \PP_{\rho}(\kappa_{r_0}^{-1}(\{a_1 r_1\}) \Delta^\omega).
	\end{equation*}
	Now assume \Cref{markov2indepmeasures} holds for all words of length $n - 1 \in \NN$. Then for a word $a_1 r_1 w \in (\Sigma Q)^n$
	\begin{align*}
	\PP_{e_{r_0}}(a_1 r_1 w (\Sigma Q)^\omega) & = \Psi(a_1)_{r_0, r_1} \PP_{e_{r_1}}(w (\Sigma Q)^\omega) \\
	& = \PP_{\rho}\Big(\kappa_{r_0}^{-1}(\{a_1 r_1\}) \Delta^\omega\Big) \PP_{\rho}\Big(\kappa_{r_1}^{-1}(\{w\}) \Delta^\omega\Big) \\
	& = \PP_{\rho}\Big(\kappa_{r_0}^{-1}(\{a_1 r_1\})\kappa_{r_1}^{-1}(\{w\})\Delta^\omega\Big) \\
	& = \PP_{\rho}\Big(\kappa_{r_0}^{-1}(\{a_1 r_1 w\}) \Delta^\omega\Big)
	\end{align*}
	by the independence of $\PP_{\rho}$ which completes the induction.
\end{proof}

For $\pi_1 \in [0,\infty)^{Q_1}$ and $\pi_2 \in [0,1]^{Q_2}$ we write $\pi_1 \times \pi_2 \in [0,\infty]^{Q_1 \times Q_2}$ for the vector $(\pi_1 \times \pi_2)_{(i,j)} := (\pi_1)_i  (\pi_2)_j$.

\begin{proof}[Proof of \Cref{from-gen-to-nongen}]
	By \Cref{makereduciblelyapsys}, since $(Q_2, \Sigma, \Psi_2)$ is an HMM, we may compute in polynomial time a finite set $\Delta$, a distribution $\rho \in [0,1]^{\Delta}$ and for each $r \in Q_2$ a mapping $\kappa_r : \Delta \rightarrow \Sigma Q_2$ with the property stated in \Cref{makereduciblelyapsys}. Then, we define $\tilde{\Psi}: \Delta \rightarrow [0,1]^{Q_1 \times Q_2}$ such that for all $(q_0, r_0), (q, r) \in Q_1 \times Q_2$:
	\begin{equation*}
	\tilde{\Psi}(\overline{a})_{(q_0, r_0), (q, r)} = \begin{cases}
	\Psi_1(a)_{q_0, q} & \text{when }\kappa_{r_0}(\overline{a}) = a r \\
	0 & \text{otherwise.}
	\end{cases}
	\end{equation*}
	We extend $\tilde{\Psi}$ to the mapping $\tilde{\Psi} : \Delta^* \rightarrow [0,1]^{(Q_1 \times Q_2) \times (Q_1 \times Q_2)}$ by $\tilde{\Psi}(\overline{a}_1 \cdots \overline{a}_n) = \tilde{\Psi}(\overline{a}_1) \cdots \tilde{\Psi}(\overline{a}_n)$. 
	
	Let $r_0 \in Q_2$, $\overline{a}_1 \cdots \overline{a}_n \in \Delta^*$ and $\kappa_{r_0}(\overline{a}_1 \cdots \overline{a}_n) = a_1 r_1 \cdots a_n r_n$. By the definition of the extension of $\kappa_{r_0}$, it follows that $\land_{i = 1}^n\kappa_{r_{i - 1}}(\overline{a}_i) = a_i r_i$. For all $q_0, \dots q_n \in Q_1$ we have
	\begin{align*}
	\Psi_1(a_1)_{q_0, q_1} \cdots \Psi_1(a_n)_{q_{n-1}, q_n} & = \tilde{\Psi}(\overline{a}_1)_{(q_0, r_0), (q_1, r_1)} \cdots\tilde{\Psi}(\overline{a}_n)_{(q_{n-1}, r_{n-1}), (q_n, r_n)} \\
	& = \sum_{\tilde{r}_1, \dots, \tilde{r}_n \in Q_2}\tilde{\Psi}(\overline{a}_1)_{(q_0, \tilde{r}_0), (q_1, \tilde{r}_1)} \cdots \tilde{\Psi}(\overline{a}_n)_{(q_{n-1}, \tilde{r}_{n-1}), (q_n, \tilde{r}_n)} \\
	\end{align*}
	and therefore by the definition of $\tilde{\Psi}$,
	\begin{equation}\label{equalityoftildeandpsi}
	\begin{aligned}
	& \|(\pi_1 \times e_{r_0}) \tilde{\Psi}(\overline{a}_1 \cdots \overline{a}_n)\| \\
	& = \sum_{q_0 \in \supp\,\pi_1}\sum_{(q_1, \tilde{r}_1), \dots (q_n, \tilde{r}_n) \in C} (\pi_1)_{q_0} \tilde{\Psi}(\overline{a}_1)_{(q_0, \tilde{r}_0), (q_1, \tilde{r}_1)} \cdots \tilde{\Psi}(\overline{a}_n)_{(q_{n-1}, \tilde{r}_{n-1}), (q_n, \tilde{r}_n)} \\
	& = \sum_{q_0 \in \supp\,\pi_1}\sum_{q_1, \dots q_n \in Q_1} \sum_{\tilde{r}_1, \dots \tilde{r}_n \in Q_2} (\pi_1)_{q_0} \tilde{\Psi}(\overline{a}_1)_{(q_0, \tilde{r}_0), (q_1, \tilde{r}_1)} \cdots \tilde{\Psi}(\overline{a}_n)_{(q_{n-1}, \tilde{r}_{n-1}), (q_n, \tilde{r}_n)} \\
	& = \sum_{q_0 \in \supp\,\pi_1}\sum_{q_1, \dots q_n \in Q_1} (\pi_1)_{q_0} \tilde{\Psi}(\overline{a}_1)_{(q_0, r_0), (q_1, r_1)} \cdots \tilde{\Psi}(\overline{a}_n)_{(q_{n-1}, r_{n-1}), (q_n, r_n)} \\
	& = \sum_{q_0 \in \supp\,\pi_1}\sum_{q_1, \dots q_n \in Q_1} (\pi_1)_{q_0} \Psi_1(a_1)_{q_0, q_1} \cdots \Psi_1(a_n)_{q_{n-1}, q_n} \\
	& = \|\pi_1 \Psi_1(a_1 \cdots a_n)\|.
	\end{aligned}
	\end{equation}
	Suppose there is an edge in $G_{(Q_1, \Sigma, \Psi_1), (Q_2, \Sigma, \Psi_2)}$ between two states $(q_0, r_0), (q, r) \in C$ then there is $a \in \Sigma$ such that $\Psi_1(a)_{q_0, q} > 0$ and $\Psi_2(a)_{r_0, r} > 0$. Hence, $\PP_{e_{r_0}}(a (\Sigma Q)^\omega) > 0$ and so by \Cref{makereduciblelyapsys} there exists $\overline{a} \in \kappa_{r_0}^{-1}(\{a r\})$. It follows that $\tilde{\Psi}(\overline{a})_{(q_0, r_0), (q, r)} > 0$. Therefore, since $C$ is a bottom SCC of $G_{(Q_1, \Sigma, \Psi_1), (Q_2, \Sigma, \Psi_2)}$, we have that the graph of the matrix system $(C, \Delta, \tilde{\Psi}_{|C})$ is strongly connected and therefore $((C, \Delta, \tilde{\Psi}_{|C}), \rho)$ is a Lyapunov system. Moreover, for any $\pi_1 \in [0,1]^{Q_1}$ and $\pi_2 \in [0,1]^{Q_2}$ such that $\supp(\pi_1) \times \supp(\pi_2) \subseteq C$ we have that $\| (\pi_1 \times \pi_2) \tilde{\Psi}(\overline{a}_1 \cdots \overline{a}_n) \| = \| (\pi_1 \times \pi_2)_{|C} \tilde{\Psi}_{|C}(\overline{a}_1 \cdots \overline{a}_n) \|$ where $(\pi_1 \times \pi_2)_{|C}$ is the truncation of the vector $\pi_1 \times \pi_2$ to $C$. Further,  we have for all $x \in \RR$ and $n \in \NN$
	\begin{equation}\label{equalityofdistributionspitorho}
	\begin{aligned}
	& \PP_{\pi_2}( \{ w \in \Sigma^\omega \mid \| \pi_1 \Psi(w_n) \| = x \}). \\
	& = \sum_{r_0 \in \supp\,\pi_2} (\pi_2)_{r_0} \PP_{e_{r_0}}( \{ w \in \Sigma^\omega \mid \| \pi_1 \Psi_1(w_n) \| = x \}) \\
	& = \sum_{r_0 \in \supp\,\pi_2} (\pi_2)_{r_0} \sum_{a_1 r_1 \cdots a_n r_n \in (\Sigma Q_2)^n \text{ s.t. }\| \pi_1 \Psi_1(a_1 \cdots a_n) \| = x} \PP_{e_{r_0}}(a_1 r_1 \cdots a_n r_n(\Sigma Q)^\omega) \\
	& = \sum_{r_0 \in \supp\,\pi_2} (\pi_2)_{r_0} \sum_{a_1 r_1 \cdots a_n r_n \in (\Sigma Q_2)^n \text{ s.t. }\| \pi_1 \Psi_1(a_1 \cdots a_n) \| = x} \PP_{\rho}(\kappa_{r_0}^{-1}(a_1 r_1 \cdots a_n r_n)(\Delta)^\omega) \\
	& \quad \text{by \Cref{makereduciblelyapsys}} \\
	& = \sum_{r_0 \in \supp\,\pi_2} (\pi_2)_{r_0} \PP_{\rho}(\{\overline{w} \in \Delta^\omega \mid \kappa_{r_0}(\overline{w}_n) = a_1 r_1 \cdots a_n r_n,~ \| \pi_1 \Psi_1(a_1 \cdots a_n) \| = x\}) \\
	& = \sum_{r_0 \in \supp\,\pi_2} (\pi_2)_{r_0} \PP_{\rho}( \{ \overline{w} \in \Delta^\omega \mid \| (\pi_1 \times e_{r_0}) \tilde{\Psi}(\overline{w}_n) \| = x \}) \quad \text{by (\ref{equalityoftildeandpsi})} \\
	& = \sum_{r_0 \in \supp\,\pi_2} (\pi_2)_{r_0} \PP_{\rho}( \{ \overline{w} \in \Delta^\omega \mid \| (\pi_1 \times e_{r_0})_{|C} \tilde{\Psi}_{|C}(\overline{w}_n) \| = x \}).
	\end{aligned}
	\end{equation}
	Combining the above equalities with \Cref{lem:lyapunov-exponent} we have that there exists a $\lambda \in [-\infty, 0]$ which does not depend on $\pi_1$ or $\pi_2$ such that 
	\begin{align*}
	1 & = \sum_{r_0 \in \supp \, \pi_2} (\pi_2)_{r_0} \\
	& = \sum_{{r_0} \in \supp \, \pi_2} (\pi_2)_{r_0} \PP_{\rho}\Big( \Big\{ \overline{w} \in \Delta^\omega \mid \lim_{n \rightarrow \infty} \frac1n \ln \| (\pi_1 \times e_{r_0})_{|C} \tilde{\Psi}_{|C}(\overline{w}_n) \| \in \{-\infty, \lambda\} \Big\}\Big) \text{ by \Cref{lem:lyapunov-exponent}} \\
	& = \PP_{\pi_2}\Big( \Big\{w \in \Sigma^\omega \mid \lim_{n \rightarrow \infty} \frac1n \ln \| \pi_1 \Psi(w_n) \| \in \{-\infty, \lambda\} \Big\}\Big) \text{ by (\ref{equalityofdistributionspitorho})}.
	\end{align*}
\end{proof}

\subsection{Proof of \Cref{lem:construct-L-systems}}

Towards a proof of \Cref{lem:construct-L-systems} we make some additional definitions. A \emph{reducible Lyapunov system} is a Lyapupov system $((Q, \Sigma, \Psi),\rho)$ except that the graph $G = (Q, E)$ with $E = \{(q, r) \mid \sum_{a \in \Sigma} \Psi_{q, r}(a) > 0\}$ is not necessarily strongly connected. For sets $U, V \subseteq Q$ we write $U \rightarrow_{G} V$ if there is $u \in U$ and $v \in V$ such that $v$ is reachable from $u$ in $G$.

We use the following technical lemmas. 

\begin{theorem}[F{\"u}rstenberg–Kesten theorem]\label{furstenbergkesten}
Let $((Q, \Sigma, \Psi), \mu)$ be a reducible Lyapunov system then there exists $\lambda \in [-\infty, \infty)$ such that $\PP_{\rho}$-a.s. we have
\begin{equation}
\lim_{n \rightarrow \infty} \frac1n \ln \|\vec{1} \Psi(w_n)\| = \lambda.
\end{equation}
\end{theorem}

The following lemma is Theorem 1.1 from \cite{germicorl08}.

\begin{lemma}\label{fullsupportlyapdecomp}
Let $\Sigma$ be a finite set of observations, $\rho \in [0,1]^\Sigma$ and let $((Q_1, \Sigma, \Psi_1), \rho), ((Q_2, \Sigma, \Psi_2), \rho)$ be reducible Lyapunov systems. By \Cref{furstenbergkesten} there exists $\lambda_1, \lambda_2 \in [-\infty, \infty)$ such that for $i = 1, 2$ we have
\begin{equation*}
\lim_{n\rightarrow \infty} \frac1n \ln \|\vec{1} \Psi_i(w_n) \| = \lambda_i.
\end{equation*}
Further, let $\Psi_{12} : \Sigma \rightarrow [0,1]^{Q_1 \times Q_2}$ specify transition from $Q_1$ to $Q_2$ and define $\Psi^* : \Sigma \rightarrow [0,1]^{(Q_1 \cup Q_2) \times (Q_1 \cup Q_2)}$ block-wise as 
\begin{equation*}
\Psi^*(a) = \begin{pmatrix}
\Psi_1(a) & \Psi_{12}(a) \\
0 & \Psi_2(a)
\end{pmatrix}.
\end{equation*}
It follows $((Q_1 \cup Q_2, \Sigma, \dot{\Psi}), \rho)$ is a reducible Lyapunov system and
\begin{equation*}
\lim_{n\rightarrow \infty} \frac1n \ln \|\vec{1} \dot{\Psi}(w_n) \| = \max \{\lambda_1, \lambda_2\}.
\end{equation*}
\end{lemma}

By generalising \Cref{fullsupportlyapdecomp}

\begin{lemma}\label{reduclyapsys}
Let $((Q, \Sigma, \Psi), \rho)$ be a reducible Lyapunov system and suppose $(Q, E)$ has strongly connected components $C_1, \dots, C_K$. Let $\,\mathbf{0}$ be the zero matrix and let
\begin{equation*}
\mathcal{Z} = \{C \in \{C_1, \dots , C_K\} \mid \exists u \in \Sigma^* \text{ s.t. }\Psi_{|C}(u) = \mathbf{0} \}.
\end{equation*}
For each $k \in [K]$, $\mathcal{G}_k := ((C_k, \Sigma, \Psi_{|C_k}), \rho)$ is a Lyapunov system. We have that $\PP_{\rho}$-almost surely,
\begin{equation} \label{furstenberggroups}
\lim_{n\rightarrow \infty} \frac1n \ln \|\vec{1} \Psi(w_n) \| = \max \Big( \Big\{\lambda(\mathcal{G}_k) \mid C_k \notin \mathcal{Z} \Big\} \cup \{-\infty\} \Big) 
\end{equation}
where $\vec{1}$ is the row vector all of whose entries are $1$. Further, for any initial distribution $\pi \in [0,1]^Q$, we have that $\PP_{\rho}$-almost surely,
\begin{equation}\label{generalizedfursten}
\lim_{n\rightarrow \infty} \frac1n \ln \|\pi \Psi(w_n) \| \in \Big\{\lambda(\mathcal{G}_k) \mid \supp~\pi  \rightarrow_{G} C_k, C_k \notin \mathcal{Z} \Big\} \cup \{-\infty\}.
\end{equation}
\end{lemma}

\begin{proof}
We first prove (\ref{furstenberggroups}). By \Cref{furstenbergkesten} for $i \in [N]$ there exists $\lambda_i \in [-\infty, \infty)$ such that $\PP_{\rho}$-a.s.,
\begin{equation*}
\lim_{n\rightarrow \infty} \frac1n \ln \|\vec{1} \Psi_{|C_i}(w_n) \| = \lambda_i.
\end{equation*}
In the case $\lambda_i = -\infty$ then by \Cref{lem:lyapunov-exponent} we have that $\PP_{\rho}$-a.s. there is a word $u \in \Sigma^*$ such that $\vec{1} \Psi_{|C_i}(u) = \vec{0}$ hence $C_i \in \mathcal{Z}$. 

In the case $\lambda_i \in \RR$ then by \Cref{lem:lyapunov-exponent} $\lambda_i = \lambda(\mathcal{G}_i)$. Also $\{w \in \Sigma^\omega \mid \exists n\in\NN : \vec{1}\Psi_{|C_i}(w_n) = \vec{0}\}$ is $\PP_{\rho}$-null set and therefore empty because $\rho$ has full support and therefore all finite words have positive probability of being produced. It follows that $C_i \notin \mathcal{Z}$.

We proceed by induction. In the case $K = 1$, the cases described previously immediately imply that (\ref{furstenberggroups}) holds. Now we assume the lemma holds for some $K = m \in \NN$. We prove this implies the theorem holds for $K = m + 1$.

Let $((Q, \Sigma, \Psi), \rho)$ be a reducible Lyapunov system such that $(Q, E)$ has strongly connected components $C_1, \dots, C_{m + 1}$. We have that $((C_1 \cup \cdots \cup C_m, \Sigma, \Psi_{|C_1 \cup \cdots \cup C_m}), \rho)$ is a reducible Lyapunov system so by the induction hypothesis, $\PP_{\rho}$-a.s. we have
\begin{equation*}
\lim_{n\rightarrow \infty} \frac1n \ln \|\vec{1} \Psi_{|C_1 \cup \cdots \cup C_m}(w_n) \| = \max\Big(\{\lambda(\mathcal{G}_i) \mid i \in [m], C_i \notin \mathcal{Z}\} \cup \{-\infty\}\Big).
\end{equation*}
In the case that $\lambda_{m + 1} \in \RR$ then $\lambda(\mathcal{G}_{m+1}) = \lambda_{m+1}$ and $C_{m + 1} \notin \mathcal{Z}$. Therefore, $\lambda_{m + 1} \in \{\lambda(\mathcal{G}_i) \mid i \in [m + 1], C_i \notin \mathcal{Z}\}$.

In the case $\lambda_{m + 1} = -\infty$, then $C_{m + 1} \in \mathcal{Z}$ and clearly $\{\lambda(\mathcal{G}_i) \mid i \in [m], C_i \notin \mathcal{Z}\} = \{\lambda(\mathcal{G}_i) \mid i \in [m + 1], C_i \notin \mathcal{Z}\}$. So by \Cref{fullsupportlyapdecomp} we have
\begin{align*}
\lim_{n\rightarrow \infty} \frac1n \ln \|\vec{1} \Psi(w_n) \| & = \max\Big\{\lambda_{m + 1}, \max\Big(\{\lambda(\mathcal{G}_i) \mid i \in [m], C_i \notin \mathcal{Z}\} \cup \{-\infty\} \Big) \Big\} \\
& = \max \Big(\{\lambda(\mathcal{G}_i) \mid i \in [m + 1], C_i \notin \mathcal{Z}\} \cup \{-\infty\} \Big).
\end{align*}
We now prove (\ref{generalizedfursten}). Like in \Cref{sec:qual}, for $S \subseteq Q$ and $a\in\Sigma$, define $\delta'(S, a) = \{q' \in Q \mid \exists\, q \in S : \Psi(a)_{q, q'} > 0\}$. Then we define the Markov chain $\mathcal{B}' := (2^Q, T')$ where
\begin{equation*}
T'_{S, S'} := \sum_{\delta'(S, a) = S'} \rho(a).
\end{equation*}
Let $\mathcal{D}$ be the set of bottom SCCs that are reachable in $\mathcal{B}'$ from the initial state $\supp~\pi$. We fix $D \in \mathcal{D}$. We may order the $C_1 \cdots C_K$ such that for some $k \in [K]$
\begin{equation*}
S_D := \{q \in S \mid S \in D \} \cap C_i \neq \emptyset
\end{equation*}
for all $i \in [k]$. It is clear that $S_D \subseteq C_1\cup \cdots \cup C_k$. We show the reverse inclusion. Let $i \in [k]$ then there is $q_0 \in C_i \cap S_D$. For any $q' \in C_i$ there is a word $a_1 q_1 \cdots a_m q_m \in (\Sigma Q)^*$ such that $q_m = q'$ and $\Psi_{q_{i - 1}, q_i}(a_i) > 0$ for all $i \in [m]$. It follows that $q' \in S_D$ and therefore $S_D = C_1 \cup \cdots \cup C_k$.

We have that $\PP_{\rho}$-a.s. $\lim_{n \rightarrow \infty} \frac1n \ln \|\vec{1} \Psi_{|S_D}(w_n)\|$ exists and equals 
\begin{equation*}
\lambda_D := \max\Big(\{\lambda(\mathcal{G}_i) \mid i \in [k], C_i \notin \mathcal{Z}\} \cup \{-\infty\}\Big)
\end{equation*}
by (\ref{furstenberggroups}). In particular there is $q \in S_D$ such that
\begin{equation*}
\alpha := \PP_{\rho}\Big(\lim_{n \rightarrow \infty} \frac1n \ln \|e_q \Psi(w_n)\| = \lambda_D\Big) = \PP_{\rho}\Big(\lim_{n \rightarrow \infty} \frac1n \ln \|e_q \Psi_{|S_D}(w_n)\| = \lambda_D\Big) > 0.
\end{equation*} 
For all $S_0 \in D$ there is a sequence $a_1 S_1 \cdots a_n S_n \in (\Sigma D)^*$ such that $q \in S_n$ and $S_{i} = \delta(S_{i - 1}, a_i)$ for all $i = 1, \dots, n$. We have that $\beta_{S_0} := \prod_{i = 1}^n \rho(a_i) > 0$. Write $\beta = \min \{\beta_{S_0} \mid S_0 \in D\}$. 
Let $\pi' \in [0,1]^Q$ be such that $\supp~\pi' \in D$. Then for any $u \in \Sigma^*$ we have 
\begin{align*}
& \PP_{\rho}(\lim_{n \rightarrow \infty} \frac1n \ln \|\pi' \Psi(w_n)\| = \lambda_D \mid u \Sigma^\omega) \\
& = \PP_{\rho}(\lim_{n \rightarrow \infty} \frac1n \ln \|\pi' \Psi(w_n)\| = \lambda_D \mid u \Sigma^\omega) \\
& \geq \beta \PP_{\rho}(\lim_{n \rightarrow \infty} \frac1n \ln \|\pi' \Psi(u w_n)\| = \lambda_D) \\
& \geq \beta \PP_{\rho}(\lim_{n \rightarrow \infty} \frac1n \ln \|e_q \Psi(w_n)\| = \lambda_D) \\
& = \beta \alpha > 0.
\end{align*}
Hence by L{\'e}vy's 0-1 law we have 
\begin{align*}
1 & = \PP_{\rho}\Big(\lim_{n \rightarrow \infty} \frac1n \ln \|\pi' \Psi(w_n)\| = \lambda_D\Big) \\
& = \PP_{\rho}\Big(\lim_{n \rightarrow \infty} \frac1n \ln \|\pi' \Psi(w_n)\| \in \{\lambda(\mathcal{G}_i) \mid \supp~\pi \rightarrow_G C_i, C_i \notin \mathcal{Z}\} \cup \{-\infty\}\Big)
\end{align*}
where the last equality follows because $D \in \mathcal{D}$ is reachable from $\supp~\pi$ in $\mathcal{B}'$ which implies that for any $C_i \subseteq S_D$ we have $\supp~\pi \rightarrow_G C_i$.

We now relax the dependence on $D$. For any $D \in \mathcal{D}$, let $F_D = \{w \in \Sigma^\omega \mid \exists n\in\NN : \supp~(\pi \Psi(w_n)) \in D\}$. Then,
\begin{equation*}
\PP_{\rho}\Big( \bigcup_{D \in \mathcal{D}}F_D\Big) = 1.
\end{equation*}
Write $\Theta = \{\lambda(\mathcal{G}_i) \mid \supp~\pi \rightarrow_G C_i, C_i \notin \mathcal{Z}\} \cup \{-\infty\}$ then $\PP_{\rho}$-a.s. we have
\begin{align*}
\PP_{\rho}(\lim_{n \rightarrow \infty} \frac1n \ln \|\pi \Psi(w_n)\| \in \Theta) & = \sum_{D \in \mathcal{D}} \PP_{\rho}(\{\lim_{n \rightarrow \infty} \frac1n \ln \|\pi \Psi(w_n)\| \in \Theta \} \mid F_D) \PP_{\rho}(F_D) \\
& = \sum_{D \in \mathcal{D}} \PP_{\rho}(F_D) = 1.
\end{align*}
\end{proof}

The following is also repeated from \Cref{sec:rep}.

\definitionsforgenlyap

\constructLsystems*

\begin{proof}
Let $\mathcal{H} = (Q, \Sigma, \Psi)$ and observe that 
\begin{equation*}
\PP_{\pi_2}\Big(r_0 a_1 r_1 a_2 r_2 \cdots \in Q (\Sigma Q)^\omega \mid \exists k \in \NN \text{ s.t. } r_k \text{ is in a bottom SCC of } \mathcal{H} \Big) = 1.
\end{equation*}
Consider a word $a_1 r_1 a_2 r_2 \cdots \in (\Sigma Q)^\omega$ such that $\PP_{\pi_2}(Q a_1 r_1 \cdots a_m r_m (\Sigma Q)^\omega) > 0$ for all $m \in \NN$ and further, there exists a $k \in \NN$ such that $r_k \in P$ where $P \subseteq Q$ is a bottom SCC of $\mathcal{H}$. We write $u = a_1 r_1 \cdots a_k r_k$ and $w = a_{k+1} r_{k+1} a_{k+2} r_{k+2}\cdots \in (\Sigma Q)^\omega$. Let $\mu_i = \pi_i \Psi(u) \times e_{r_k}$ for $i = 1,2$. Recall the definition of $\overline{\Psi}$. We have that for any $n > k$ that $\| \pi_i \Psi(a_1 \cdots a_{k + n}) \| = \| \pi_i \Psi(a_1 \cdots a_k) \Psi(a_{k+1} \cdots a_{k + n}) \| = \| \mu_i \overline{\Psi}(w_n) \|$ for $i = 1, 2$. We fix $u$ and consider words $w$ produced by $\mathcal{H}$ with initial distribution $e_{r_k}$. We have that $\PP_{e_{r_k}}$-almost surely if the limits exist,
\begin{align*}
\lim_{n \rightarrow \infty} \frac{1}{k + n} \ln L_{k + n}(uw) & = \lim_{n \rightarrow \infty}  \frac{1}{k + n} \Big[ \ln \| \pi_1 \Psi(a_1 \cdots a_{k + n}) \| - \ln \|\pi_2 \Psi(a_1 \cdots a_{k + n}) \| \Big] \\
& = \lim_{n \rightarrow \infty} \frac{1}{k + n} \Big[ \ln \| \mu_1 \overline{\Psi}(w_n) \| - \ln \|\mu_2 \overline{\Psi}(w_n) \| \Big] \\
& = \lim_{n \rightarrow \infty} \frac1n \Big[ \ln \| \mu_1 \overline{\Psi}(w_n) \| - \ln \|\mu_2 \overline{\Psi}(w_n) \| \Big] \\
\end{align*}
since $\lim_{n \rightarrow \infty} \frac{n}{k + n} = 1$.

By \Cref{makereduciblelyapsys} there is a finite set $\Delta$, distribution $\rho \in (0,1]^{\Delta}$ and mapping $\kappa_{r_k}: \Delta^+ \rightarrow (\Sigma Q)^+$ such that $\PP_{\rho}(\{\kappa_{r_k}^{-1}(w_n \}) \Delta^\omega) = \PP_{e_{r_k}}(w_n (\Sigma Q)^\omega)$ for all $n \in \NN$. Let $S \subseteq Q \times Q$. It follows that for each $n \in \NN$ and $A \in [0, 1]^{S\times S}$ we have
\begin{equation}\label{equalityofpushforward}
\PP_{e_{r_k}}(\{w \in (\Sigma Q)^\omega \mid \overline{\Psi}_{|S}(w_n) = A\}) = \PP_{\rho}(\{\overline{w} \in \Delta^\omega \mid \overline{\Psi}_{|S} (\kappa_{r_k}(\overline{w}_n)) = A \}).
\end{equation}

Let $\mathcal{C}$ be the set of SCCs of $G_{\mathcal{H}, \mathcal{H}}$. We define $\sigma : \mathcal{C} \rightarrow \mathcal{P}(Q)$ such that $\sigma(R) = \{q \in Q \mid \exists p \in Q \text{ s.t. }(p,q) \in R\}$. Recall that $r_k \in P$ and $P$ is a bottom SCC. Thus, for all $i = 1, 2$, $n \in \NN$ and $(p, q) \in \supp~ ( \mu_i \overline{\Psi}(w_n))$ we have $q \in P$. Therefore, for all $R \in \mathcal{C}$ 
\begin{equation}\label{staysinP}
\supp~\mu_i \rightarrow_{G_{\mathcal{H}, \mathcal{H}}} R \implies \sigma(R) = P.
\end{equation}
Consider the composition $\overline{\Psi} \circ \kappa_{r_k} : \Delta^+ \rightarrow [0,1]^{(Q \times Q) \times (Q \times Q)}$. For any $R \in \sigma^{-1}(P)$ we have that trivially 
\begin{equation}\label{interchangeablerestandcomp}
(\overline{\Psi} \circ \kappa_{r_k})_{|R} = \overline{\Psi}_{|R} \circ \kappa_{r_k}.
\end{equation}
Further, $((R, \Delta, \overline{\Psi}_{|R} \circ \kappa_{r_k}), \rho)$ is a Lyapunov system and recall that $S^1_R = ((R, \Sigma P, \overline{\Psi}_{|R}), (P, \Sigma P, \Psi'_{|P}), C^1_R)$ is a generalized Lyapunov system where $C^1_R = \{((q_1, q_2), q_2) \mid (q_1, q_2) \in R\}$. Let $V = \{q \in Q \mid (q, r_k) \in R\}$ and $\upsilon \in [0,1]^V$. Since $\supp(\upsilon \times e_{r_k}) \times \{r_k\} \in C^1_R$ by \Cref{from-gen-to-nongen} the limit $\lim_{n \rightarrow \infty} \frac1n \ln \|(\upsilon \times e_{r_k}) \overline{\Psi}_{|R}(w_n)\|$ exists $\PP_{e_{r_k}}$-almost surely and equals either $\lambda(S_R^1)$ or $-\infty$. Then, by (\ref{equalityofpushforward}) with $S = R$ we have for all $x \in [-\infty, 0]$ that 
\begin{align}\label{equivofgeneralizedandnorm}
& \PP_{e_{r_k}}(\{w \in (\Sigma Q)^\omega \mid \lim_{n \rightarrow \infty} \frac1n \ln \|(\upsilon \times e_{r_k}) \overline{\Psi}_{|R}(w_n)\| = x\}) \nonumber\\
& = \PP_{\rho}(\{\overline{w} \in \Delta^\omega \mid \lim_{n \rightarrow \infty} \frac1n \ln \|(\upsilon \times e_{r_k}) \big[(\overline{\Psi}_{|R} \circ \kappa_{r_k})(\overline{w}_n)\big]\| = x\}).
\end{align}
Recall the definition of $\mathcal{L}$. The following series of equalities hold:
\begin{equation}\label{possiblelyaplimits}
\begin{aligned}
& \lim_{n \rightarrow \infty} \frac1n \ln \|\mu_i \overline{\Psi}(w_n) \| \quad \PP_{e_{r_k}}\text{-a.s.}\\
& = \lim_{n \rightarrow \infty} \frac1n \ln \|\mu_i \big[(\overline{\Psi} \circ \kappa_{r_k})(\overline{w}_n)\big] \| \quad \PP_{\rho}\text{-a.s.}  \quad \text{ by (\ref{equalityofpushforward}) where } S = Q \times Q\\ 
& \in \Big\{\lambda((R, \Delta, (\overline{\Psi} \circ \kappa_{r_k})_{|R}), \rho) \mid R \in \mathcal{C} / \mathcal{L},  \supp~\mu_i \rightarrow_{G_{\mathcal{H}, \mathcal{H}}} R \Big\} \cup \{-\infty\} \, \PP_{\rho}\text{-a.s.} \\
& \quad \quad \text{by \Cref{reduclyapsys}}\\
& = \Big\{\lambda((R, \Delta, \overline{\Psi}_{|R} \circ \kappa_{r_k}), \rho) \mid R \in \sigma^{-1}(P) / \mathcal{L},  \supp~\mu_i \rightarrow_{G_{\mathcal{H}, \mathcal{H}}} R \Big\} \cup \{-\infty\} \\
& \quad \quad \text{by (\ref{staysinP}) and (\ref{interchangeablerestandcomp})}\\ 
& = \Big\{\lambda(\S_R^1) \mid R \in \sigma^{-1}(P) / \mathcal{L},  \supp~\mu_i \rightarrow_{G_{\mathcal{H}, \mathcal{H}}} R \Big\} \cup \{-\infty\} \quad \text{by (\ref{equivofgeneralizedandnorm})}.\\
& = \Big\{\lambda(\S_R^1) \mid R \in \sigma^{-1}(P) /\mathcal{L},  \supp~\mu_i \rightarrow_{G_{\mathcal{H}, \mathcal{H}}} R \Big\} \cup \{-\infty\}.\\
\end{aligned}
\end{equation}

We now focus on the case $i = 2$. Since $P$ is an SCC, there is a unique right-bottom SCC $P'$ that contains the states $\{(p, p) \mid p \in P\}$. Let $U'$ be another right-bottom SCC in the set $\sigma^{-1}(P) /\mathcal{L}$. Since $U'$ is an SCC and not in $\mathcal{L}$ we have that $\|\sum_{(s, r) \in U'}(e_s \times e_r)\overline{\Psi}_{|U'}(w_n)\| > 0$ for all $n \in \NN$. Therefore $\PP_{e_r}$-a.s. by \Cref{from-gen-to-nongen}, 
\begin{align*}
\lambda(\S_{U'}^1) & = \lim_{n \rightarrow \infty} \frac1n \ln \|\sum_{(s, r) \in U'}(e_s \times e_r)\overline{\Psi}_{|U'}(w_n)\| \\
& \leq \lim_{n \rightarrow \infty} \frac1n \ln |U'|\max_{(s, r) \in U'} \|(e_s \times e_r)\overline{\Psi}_{|U'}(w_n)\| \\
& = \max_{(s, r) \in U'} \lim_{n \rightarrow \infty} \frac1n \ln \|(e_s \times e_r)\overline{\Psi}_{|U'}(w_n)\| \\
& = \lambda(\S_{U'}^1) \text{ because each limit is either } \lambda(\S^1_{U'})\text{ or }-\infty 
\end{align*}
It follows that there is some $(s, r) \in U'$ such that $\PP_{e_r}(\lim_{n \rightarrow \infty} \frac1n \ln \|(e_s \times e_r)\overline{\Psi}_{|U'}(w_n)\| = \lambda(\S_{U'}^1)) > 0$.
Let $U = \{q \in Q \mid \exists p\in Q \text{ s.t. }(q,p) \in U'\}$. Then $s \in U$, $r \in P$ and $(r, r) \in P'$. Hence, with $\PP_{e_r}$-probability greater than $0$ we have
\begin{align*}
0 & \geq \lim_{n \rightarrow \infty} \frac1n \ln \frac{\|e_s \Psi_{|U}(w_n)\|}{\|e_r \Psi_{|P}(w_n)\|} \text{ by \Cref{convergenceLn}}\\
& = \lim_{n \rightarrow \infty} \frac1n \ln \frac{\|(e_s \times e_r) \overline{\Psi}_{|U'}(w_n)\|}{\|(e_r \times e_r) \overline{\Psi}_{|P'}(w_n)\|} \\
& = \lim_{n \rightarrow \infty} \frac1n \big[ \ln \|(e_s \times e_r) \overline{\Psi}_{|U'}(w_n)\| - \ln \|(e_r \times e_r) \overline{\Psi}_{|P'}(w_n)\| \big] \\
& = \lambda(\S_{U'}^1) - \lambda(\S_{P'}^1)
\end{align*}
which implies that $\lambda(\S^1_{P'}) \geq \lambda(\S^1_{U'})$. We have that $(r_k, r_k) \in \supp~ \mu_2$ and so $\PP_{e_{r_k}}$-almost surely we have ${\| (e_{r_k} \times e_{r_k}) \overline{\Psi}_{P'}(w_n) \| > 0}$ for all $n \in \NN$. By (\ref{staysinP}), it follows that $\PP_{e_{r_k}}$-a.s. we have
\begin{align*}
\lambda(\S_{P'}^1) & = \lim_{n \rightarrow \infty} \frac1n \ln \|(e_{r_k} \times e_{r_k}) \overline{\Psi}_{|P'}(w_n) \| \quad \text{ by \Cref{from-gen-to-nongen}}\\
& \leq \lim_{n \rightarrow \infty} \frac1n \ln \|\mu_2 \overline{\Psi}(w_n) \|\\
& \leq \max \Big\{\lambda(\S_R^1) \mid R \in \sigma^{-1}(P),~\supp~\mu_2 \rightarrow_{G_{\mathcal{H}, \mathcal{H}}} R \Big\} \text{ by (\ref{possiblelyaplimits})}\\
& \leq \lambda(\S_{P'}^1).
\end{align*}
Then, recalling the definition of $S_R^2$ we have $\PP_{\pi_2}$-almost surely
\begin{align*}\label{fixedutheoremversion}
\lim_{n \rightarrow \infty} \frac{1}{k + n} \ln L_{k + n}(uw) & = \lim_{n \rightarrow \infty} \frac1n \Big[ \ln \| \mu_1 \overline{\Psi}(w_n) \| - \ln \|\mu_2 \overline{\Psi}(w_n) \| \Big] \nonumber\\
& \in \Big\{\lambda(\S_R^1) - \lambda(S_{P'}^1) \mid R \in \sigma^{-1}(P), \supp~ \mu_1 \rightarrow_{G_{\mathcal{H}, \mathcal{H}}} R \Big\}\cup \{-\infty\} \\
& = \Big\{\lambda(\S_R^1) - \lambda(S_R^2) \mid R \in \sigma^{-1}(P), \supp~ \mu_1 \rightarrow_{G_{\mathcal{H}, \mathcal{H}}} R \Big\}\cup \{-\infty\}
\end{align*}
since $\S_R^2 = \S_{P'}^1$ when $\sigma(R) = P$.

Recall that for any $R \in \mathcal{R}$ the right-bottom SCC $R'$ is such that $\{(p, p) \mid p \in \sigma(R)\} \subseteq R'$

We may now relax the dependence on $u$ and $P$. We have that $\supp~\pi_1 \times \supp~\pi_2 \rightarrow_{G_{\mathcal{H}, \mathcal{H}}} (\supp~ \pi_1 \Psi(a_1 \cdots a_k)) \times \{r_k\}$ for any $a_1 r_1 \cdots a_k r_k \in (\Sigma Q)^+$ where $\PP_{\pi_2}(Q a_1 r_1 \cdots a_k r_k (\Sigma Q)^\omega) > 0$. Therefore for all $R \in \mathcal{R}$ 
\begin{equation*}
(\supp~\pi_1 \Psi(a_1 \cdots a_k)) \times \{r_k\} \rightarrow_{G_{\mathcal{H}, \mathcal{H}}} R \implies  \supp~\pi_1 \times \supp~\pi_2 \rightarrow_{G_{\mathcal{H}, \mathcal{H}}} R.
\end{equation*}
Finally we have up to a $\PP_{\pi_2}$-null set
\begin{align*}
& \{a_1 r_1 a_2 r_2 \cdots \in (\Sigma Q)^\omega \mid \exists k \in \NN \text{ s.t. } r_k \text{ is in a bottom SCC of } \mathcal{H}\} \\
& \subseteq \Big\{\lim_{n \rightarrow \infty} \frac{1}{k + n} \ln L_{k + n}(uw) \\
& \in \{-\infty\} \cup \big\{\lambda(\S_R^1) - \lambda(\S_R^2) \mid R \in \mathcal{R}, \supp~\pi_1 \times \supp~\pi_2 \rightarrow_{G_{\mathcal{H}, \mathcal{H}}} R \big\} \Big\}
\end{align*}
and the lemma follows.
\end{proof}

\section{Proofs from \cref{sec:det}} \label{app:det}

\thmdet*
\begin{proof}
In a Markov chain, one can compute the stationary distribution and hitting probabilities in polynomial time by solving a linear system of equations.
Thus, the numbers $\ell(C)$ defined before \cref{lem:polyprop} can be computed in polynomial time.
Both parts of the theorem follow then from \cref{lem:polyprop}.
A slight complication is that for part~2, for an $\ell = \sum_i x_i \ln y_i \in \Lambda_{\pi_1,\pi_2}$, in order to compute $\PP_{\pi_2}(E_\ell)$ we have to sum the hitting probabilities for all $C$ with $\ell = \ell(C)$.
To select those~$C$ we have to compare numbers of the form $\sum_i x_i \ln y_i$ where $x_i,y_i \in \QQ$, and it is not immediately obvious how to do that.
However, one can compare two such numbers for equality in polynomial time as shown in~\cite{EtessamiSY14}.
\end{proof}

\end{document}